\title [Reconstruction algorithms for source term recovery] {Reconstruction algorithms for source term recovery from dynamical samples in catalyst models}
\author[Akram Aldroubi, Le Gong, Ilya Krishtal, Brendan Miller, Sumati Thareja]{A. Aldroubi, L. Gong, I. Krishtal, B. Miller, S. Thareja}
\newtheorem{theorem}{Theorem}[section]
\newtheorem{proposition}[theorem]{Proposition}
\newtheorem{lemma}[theorem]{Lemma}
\newtheorem{remark}[theorem]{Remark}
\newcommand{\HH}{\mathcal{H}}
\newcommand{\R}{\mathbb{R}}
\newcommand{\Z}{\mathbb{Z}}
\newcommand{\G}{\mathcal{G}}
\newcommand{\T}{\mathcal{T}}
\newcommand{\estime}{\mathfrak t}
\newcommand{\esshape}{\mathfrak{f}_j(g)}
\newcommand{\ang}[1]{\langle #1 \rangle}
\newcounter{lst}
\definecolor{aacolor}{rgb}{0.05, 0.75, 1}
\definecolor{ikcolor}{rgb}{1, 0., 0.}
\definecolor{bmcolor}{rgb}{0.9, 0.3, 0}
\begin{document}
	\date{}



%
%
\address{\textrm{(Akram Aldroubi)}
Department of Mathematics,
Vanderbilt University,
Nashville, TN, 37240, USA}
\email{akram.aldroubi@vanderbilt.edu}

\address{\textrm{(Le Gong)}
	Department of Mathematics,
	Vanderbilt University,
	Nashville, TN, 37240, USA}
\email{le.gong@vanderbilt.edu}

\address{\textrm{(Ilya Krishtal)}
        Department of Mathematical Sciences, Northern Illinois University,
        Dekalb, IL, 60115, USA}
\email{ikrishtal@niu.edu}

\address{\textrm{(Brendan Miller)}
        Department of Mathematical Sciences, Northern Illinois University,
        Dekalb, IL, 60115, USA}
\email{bmiller14@niu.edu}

\address{\textrm{(Sumati Thareja)}
    Department of Mathematics,
	Vanderbilt University,
	Nashville, TN, 37240, USA}
\email{sumati.thareja@vanderbilt.edu}

\keywords{Sampling Theory, Forcing, Frames,
Reconstruction, Continuous Sampling }
\subjclass [2010] {46N99, 42C15,  94O20}

\maketitle

\begin{abstract}
{This paper investigates the problem of recovering source terms in abstract initial value problems (IVP) commonly used to model various scientific phenomena in physics, chemistry, economics, and other fields. We consider source terms of the form 
$F=h+\eta$, where $\eta$ is a Lipschitz continuous background source. The primary objective is to estimate the unknown parameters of non-instantaneous sources 
$h(t)=\sum\limits_{j=0}^M h_je^{-\rho_j(t-t_j)}\chi_{[t_j,\infty)}(t)$, such as the decay rates, initial intensities and activation times. 
We present two novel recovery algorithms that employ distinct sampling methods of the solution  of the IVP. Algorithm 1 combines discrete and weighted average measurements, whereas Algorithm 2 uses a different variant of weighted average measurements. We analyze the performance of these algorithms, providing upper bounds on the recovery errors of the model parameters. Our focus is on the structure of the dynamical samples used by the algorithms and on the error guarantees they yield.}
\end{abstract}

\section{Introduction}

Numerous scientific phenomena in physics, chemistry, economics, and other fields can be effectively modeled using abstract initial value problems (IVP) such as:
\begin{equation}\label{DFM}
	\begin{cases}
	\dot{u}(t)=Au(t)+F(t)\\
	u(0)=u_0,
	\end{cases}
	\quad t\in\mathbb R_+,\ u_0\in\HH.
\end{equation}  
The above problem typically describes propagation of a phenomenon in time, represented by the variable $t\in\R_+$. 
The function $u$ in \eqref{DFM} is vector-valued; in this paper, we shall assume that $u(t)$ is a vector in some Hilbert space $\HH$. Most often, $\HH$ will be a space of functions on a subset of $\R^d$, such as $L^2([0,1]^d)$. 
We then have that $\dot{u}: \R_+\to\HH$ is the time derivative of $u$ and $F:\mathbb{R}_+\rightarrow\HH$ is  the forcing term. The operator $A: D(A)\subseteq \HH\to\HH$ is assumed to be a generator of a strongly continuous semigroup $T:\mathbb R_+\to B(\HH)$, where $B(\HH)$ denotes the Banach algebra of all bounded linear operators on $\HH$. 

One common example of \eqref{DFM} is the diffusion equation, which is widely used to model, for instance, the dispersion of biochemical wastes or the spread of fungal diseases \cite{egan1972numerical, langendoen2006murphy, 1420811}. In these cases, the vector $u(t)$ describes the concentration of the released substance in various spatial locations at time $t$, thus giving the researchers a model for substance propagation across both space and time. Problem \eqref{DFM} may also describe an ongoing chemical reaction or a combined effect of various medications over time. In these cases, the vector $u(t)$ may be finite dimensional and describe the amounts of various chemicals in the system at time $t$. 

 To study scientific phenomena, researchers commonly utilize sensors to gather spatiotemporal measurements   within a specified area. Within the above framework, the gathered measurements are samples of the solution $u$ of \eqref{DFM} and the goal of study is to determine more information about $u$ itself or about other parameters of \eqref{DFM}. Thus  effective and robust sampling and reconstruction methods greatly  influence various practical applications and new approaches may lead to significant advances. 
 
 We place the above sampling-reconstruction problem within the framework of dynamical sampling. The framework encompasses a range of problems in which various samples of a signal  $u$ evolving over time under the influence of a linear operator $A$ 
 are used to recover various aspects of the system, such as $u_0$, $A$ or $F.$ Recovering the initial condition $u_0$ is known as the space-time trade-off sampling problem (see \cite{alddavkri13}). Recovering the linear operator $A$ gives rise to the system identification problem (see, e.g., \cite{AHKLLV18, aldkri16,  CT22, Tan17}). The problem investigated in this paper falls within the realm of identifying specific types of source terms $F$ that drive the dynamical system \cite{aldgonkri23, aldhuakor23}. Additionally, dynamical sampling problems are closely connected to multiple branches of mathematics, such as frame theory, control theory, functional analysis, and harmonic analysis (see \cite{AKh17, ACCP21, ashbrock2023dynamical, BH23, BK23, CMS21, cabrelli2022frames, CH19, CH23, DMM21, MMM21,  martin2023error,  FS19, GRUV15, KS19, Men22, MT23}).

In this paper, we will only consider sources of the form $F=h+\eta$, where $\eta$ is a Lipschitz continuous background source which is of no interest to us. In some cases, we are concerned with the classical solutions of \eqref{DFM} and will additionally assume that $\eta$ belongs to the Sobolev space $W^{1,1}_{loc}(\mathbb R_{+},\HH)$, which consists of all functions $f:\mathbb R_{+}\rightarrow\HH$ such that both $f$ and its weak first derivative are Lebesgue integrable on compact subsets of $\mathbb R_{+}$ \cite{evans2010partial}.

We will use various samples of $u$ collected from sensors to solve the problem of estimating certain parameters of $h.$
In the literature \cite{aldgonkri23, aldhuakor23, murdra1506}, two types of the sources $h$ are typically considered: instantaneous sources ($h(t)=\sum\limits_{j=0}^M h_j\delta(t-t_j)$) and non-instantaneous sources that exhibit exponential decay in intensity over time after activation ($h(t)=\sum\limits_{j=0}^M h_je^{-\rho_j(t-t_j)}\chi_{[t_j,\infty)}(t)$). The reconstruction of instantaneous sources has been thoroughly investigated in \cite{aldhuakor23}, where accurate estimates of both the initial intensities and activation times have been obtained. The scenario involving non-instantaneous sources with a known uniform decay rate (i.e., where $\rho_j=\rho$ for all $j$ is known) has been extensively explored in \cite{aldgonkri23}. In this paper, we will expand the study of non-instantaneous sources to estimate  all of their parameters.  Our focus here will be on non-instantaneous sources with varying unknown decay rates, which we would like to estimate together with their likewise unknown initial intensities and activation times.

Thus, the problem we study can be formalized as follows:
\begin{equation}\label{model}
\begin{cases}
\dot{u}(t)=Au(t)+\sum\limits_{j=0}^M h_je^{-\rho_j(t-t_j)}\chi_{[t_j, \infty)}(t)+\eta(t)\\
u(0)=u_0,\\
\end{cases}
\quad t\in\mathbb R_+,\ u_0\in \HH.
\end{equation} 
The parameters $\rho_j \in [\check{\rho}, \hat{\rho}]$, $t_j\in\mathbb R_{+}$  and $h_j\in\HH$, $j=0,1,\ldots, M$, are unknown and to be recovered (subject to known bounds $0 < \check{\rho} \le \hat{\rho}$). As mentioned above, $\eta: \mathbb R_{+} \to \HH$ is a Lipschitz continuous background source term, which is assumed to belong to $W^{1,1}_{loc}(\mathbb R_{+},\HH)$ whenever necessary.

As the goal of this paper includes recovering of the decay rates $\rho_j$, the approach employed in our previous work \cite{aldgonkri23} will not be applicable in this context.
In particular, the samples of $u$ we utilize here will be different from \cite{aldgonkri23}. In fact, we shall present two recovery algorithms that use different kinds of samples.
In Algorithm 1, we will employ a novel approach that combines discrete and  weighted average  measurements of $u$.
Algorithm 2 uses slightly different weighted average  measurements of $u$, this approach mimics that of \cite{aldhuakor23}.

 Algorithm 1 studies classical solutions and has the prerequisite that the initial condition $u_0$ must belong to the domain  $D(A)$, which is a dense set  within the Hilbert space $\HH.$ Algorithm 2  can be applied in a broader context of  mild solutions of \eqref{model}. 
Both algorithms improve estimates in \cite{aldgonkri23} for the case when $\check{\rho}=\hat{\rho}$.

We conclude the introduction with a few notes on paper organization and a small collection of useful facts from the theory of one-parameter operator semigroups.

\subsection{Paper Organization}

Section \ref{sec3} introduces two reconstruction algorithms designed to address Problem \eqref{model}, along with an exploration of the underlying model assumptions and key ideas. The performance analysis of the two algorithms is  presented in  Section \ref{sec4}. Specifically, Theorems \ref{thm1} and \ref{case1_theorem} provide upper bounds on the recovery errors of the model parameters for Algorithms 1 and 2, respectively. Proofs of the theorems appear in subsections of Section \ref{sec4}. In Section \ref{sec5}, the algorithms' performance is illustrated on a synthetic dynamical system. Finally, a few proofs of technical results appear in the Appendix.

\subsection{IVP toolkit}\label{stoolkit}

Here we remind the reader a few basic facts of operator semigroup theory and use them to provide a solution formula for IVPs of the form \eqref{model}. We refer to \cite{EN00} for more information.

A strongly continuous operator semigroup  is a map $T:\mathbb{R}_+\rightarrow B(\HH)$, 
which satisfies 
\begin {enumerate}[ (i)]
\item $T(0)=I$, 
\item $T(t+s)=T(t)T(s)$ for all $t,s\geq 0$, and
\item $\|T(t)x-x\|\rightarrow 0$ as $t\rightarrow 0$ for all $x\in \HH$.
\end {enumerate}

The operator $A$ is the (infinitesimal) generator of the semigroup $T$ if, given
\[D(A) = \left\{x\in \HH: \ \lim_{t\to 0^+} \frac1t(T(t)x-x) \mbox{ exists}\right\}, \]   $A$ satisfies 
 	\[Ah = \lim_{t\to 0^+} \frac1t(T(t)x-x),\ x\in D(A).\]
 For a strongly continuous semigroup, the domain $D(A)$ of its generator $A$ is known to be a dense subset of $\HH$. 

 According to \cite[p.~436]{EN00}  the (mild) solution of \eqref{DFM} can be represented  as
\begin{equation}\label{Solution}
	u(t):=T(t)u_0+\int_{0}^{t}T(t-s)F(s)ds.
\end{equation}
Therefore, the solution of \eqref{model} can be written as
\begin{equation}\label{solburst}
	u(t)=T(t)u_0+\sum\limits_{t_j<t}\int_{t_j}^{t}T(t-s)h_je^{-\rho_j(s-t_j)}ds+\int_0^{t}T(t-s)\eta(s)ds, \quad
	t\ge 0.
\end{equation}
Moreover, if we take $u_0\in D(A)$ and $\eta\in W^{1,1}_{loc}(\mathbb R_{+},\HH),$  then \eqref{solburst} is the unique classical solution of \eqref{model}, i.e.~$u\in C^1([0,\mathcal T],\HH)$ for any $\T > 0$.

\section{Reconstruction algorithms}\label{sec3}
In this section, we present two reconstruction algorithms for Problem \eqref{model} and some key ideas these algorithms are based upon. The algorithms' derivation and error guarantees will be presented in the following section.

Recall that  in \eqref{model} we represented the source term $F$ of \eqref{DFM} as a sum of two terms $F=h+\eta$, where $\eta$ is a background source term and $h$ is the term of interest, for which we would like to recover its key features. 

We assume that the background source $\eta$ is Lipschitz continuous:
\begin{equation} \label{LipBackgrnd}
  \|\eta(t+s)-\eta(t)\|\le Ls \quad \forall t,s\in \mathbb R_+.  
\end{equation}
Furthermore, in Algorithm 1, we are interested in the classical solutions of \eqref{model} and additionally assume that $\eta$ 
belongs to the Sobolev space $W^{1,1}_{loc}(\mathbb R_{+},\HH)$.

As evidenced by \eqref{model}, the function $h$ 
is of the form
\begin{equation} \label {frmh} 
h(t)=\sum\limits_{j=0}^M h_je^{-\rho_j(t-t_j)}\chi_{[t_j, \infty)}(t).
\end{equation}
Each term in the  above sum can model, for example, a new substance entering a chemical reaction. With this motivation, we will refer to these terms as catalysts; vector $h_j \in\mathcal{H}$ can be thought of as the content of the $j$-th catalyst,    $t_j \in \mathbb R_+$ -- as the time of its intake, and the exponent $\rho_j$ -- as its rate of decay in the system. 

Thus, our algorithms are designed to recover the unknown times \( t_j \) of catalyst intake, the rates of decay $\rho_j$, and certain ingredients in  the catalyst contents \( h_j \), which will be represented by a set of inner products $\{\langle h_j, g \rangle: g\in \mathcal G\}$ for some known set $\mathcal{G}\subset \mathcal{H}$.

To accomplish their purpose, our algorithms use different kinds of weak measurements of the solution of \eqref{model}. Although the designs of the measurements are different, in both cases, they are based on the same key idea. They combine information about the current state of the system with a prediction of its future state on condition that no new catalyst intake occurs in between. Consequently, thresholding certain linear combinations of the measurements allows us to  determine (with a reasonable degree of accuracy) if a catalyst with ingredients of interest has entered the system in a given time period. Once a new catalyst has been detected,  relatively   simple computations with the measurements provide estimates for its decay rate and content.  The technical difficulties of this research stem from the fact that we need to carefully select the thresholds and other parameters of the algorithms to provide meaningful error guarantees.

Before we proceed to the description of the algorithms let us add a few more fairly natural assumptions.

We assume that there is a known uniform bound $H$ on the mass of each catalyst:
\begin{equation}\label{Bddh}
    \sup_j\|h_j\|\le H.
\end{equation}

The set $\G$, which we choose to detect the catalyst ingredients, is likewise selected to be uniformly bounded: 
\[R = \sup_{g\in \G} \|g\| < \infty.\]

We also assume that intake times of consecutive catalysts  are separated:
\begin {equation} \label{tsep}
t_{j+1}-t_j\ge 4\beta +D,
\end{equation}
where $\beta$ is the time-step parameter chosen by the user of the algorithms and $D>0$ is a known parameter of the system. Clearly, a bigger value of $D$ leads to a smaller  effect of previous catalysts on the system at time of the new catalyst intake, which results in better error guarantees. 

Additionally, we assume that we know some positive bounds $\check{\rho}$ and  $\hat{\rho}$ on the minimal  and maximal  possible rates of catalyst decay: $\rho_j\in[\check{\rho}, \hat{\rho}].$

\subsection{Description of Algorithm 1.}

This algorithm relies on a combination of discrete samples of the solution of \eqref{model} taken at times \(n\beta\) and weighted average samples over the periods of time in between the discrete samples. Specifically, Algorithm 1 uses the following numbers as inputs:
\begin{equation}\label{measurement1}
    \mathfrak m_{n}(g,\beta)=\langle u((n+1)\beta),\frac{g}{\beta}\rangle-\langle u(n\beta),\frac{g}{\beta}\rangle-\int_{n\beta}^{(n+1)\beta}\langle u(t),A^{*}\frac{g}{\beta}\rangle dt+\nu_n, \quad g \in \G,
\end{equation}
where $\G\subseteq D(A^*)$ and  
 $\nu_n$ represents bounded  additive noise: 
\begin{equation}\label{noiseestimate}
|\nu_n|\le \sigma.
\end{equation} 


For determining the rates of decay,
Algorithm 1 also uses a second set of measurements at a finer time scale with the step size $\tilde \beta=\frac \beta N$: 
\begin{equation}\label{measurement2}
  \mathfrak s_{n}(g, \widetilde{\beta})=\langle u(n\beta+\widetilde{\beta}),\frac{g}{\widetilde{\beta}\beta}\rangle-\langle u(n\beta), \frac{g}{\widetilde{\beta}\beta}\rangle-\langle u(n\beta), A^*\frac{g}{\beta}\rangle+\widetilde{\nu}_{n},  \quad g \in \G,
\end{equation} where $\widetilde{\nu}_{n}$ is the measurement noise satisfying $|\widetilde{\nu}_{n}|\le \sigma$ and $N\in \mathbb N$ is a parameter chosen by the user that influences the accuracy of recovery.

\begin{remark}\label{SNRremark}
 We note that in this setting the upper bound $\sigma$  on measurement noises is assumed to be independent on the time-step parameters $\beta$ and $\widetilde\beta$. The noiseless part of the measurements, however, is amplified as $\beta$ and $\widetilde\beta$ decrease. In other words, we assume that measurement devices are designed to provide a higher signal-to-noise ratio when the time step $\beta$ (and/or $\widetilde\beta$) is smaller. We justify the practicality of this setup by stipulating that expending the same amount of energy over a shorter period of time should indeed lead to more accurate measurements by a device.    
\end{remark}

\begin{remark}\label{needclass}
    Algorithm 1 deals with the classical solution of 
\eqref{model}. To guarantee its existence (see Section \ref{stoolkit}), we make assumptions on the initial condition $u_0$ to belong to the domain $D(A)$ of the generator $A$ and the background source $\eta$ to belong to the Sobolev space $W^{1,1}_{loc}$. Working with the classical solution is necessary to effectively use the measurements 
\eqref{measurement2} as approximations of samples of the solution's derivative $\dot u$.
\end{remark}

We are now ready to provide a pseudo-code for Algorithm 1. 

\begin{tabular}{rp{13cm}}
\toprule
\multicolumn{2}{p{13cm}}{\textbf{Algorithm 1.} Pseudo-code for approximating the intake time, content ingredients  and decay rate of catalysts.
}\\
\midrule
1:& \textbf{Input:} Measurements: $\mathfrak m_{i}(g,\beta)$, $\mathfrak s_{i}(g,\frac{\beta}{N});$ Parameters: $K\ge 1$, $N\in\mathbb N$\\
2:& \textbf{Set} thresholds: $ Q(g,\beta)=K\widetilde{Q}(g,\beta)$,  $g\in\G$, using \eqref{Thresh} below\\
3:& Compute $\Delta_i(g)=\mathfrak m_{i}(g,\beta)-\mathfrak m_{i-1}(g,\beta)$\\
4:& \textbf{For} $g\in\G$ \textbf{do}\\
5:& \quad $i=1$\\
6:& \quad\textbf{while} $i\beta< \mathcal T$\\
7:& \quad\quad\textbf{if} $\Delta_i(g)>Q(g,\beta)$ \text{and} $|\mathfrak m_{i+1}-\mathfrak m_{i-2}|>\widetilde{Q}(g,\beta)$ \textbf{then}\\
8:& \quad\quad\quad$\mathfrak f_j(g):=\mathfrak m_{i+1}-\mathfrak m_{i-2}$\\
9:& \quad\quad\quad$ \estime_j(g) :=i\beta$\\
10:& \quad\quad\quad$i=i+3+\lfloor \frac{D}{\beta} \rfloor$\\
11:& \quad\quad\textbf{else}\\
12:& \quad\quad\quad $i=i+1$\\

13:& Set $\mathfrak t_j=\min\limits_{g\in\G} \mathfrak t_j(g)$\\
14:& Set $\widetilde{g}=\text{argmin}_{\{g\in\G: \mathfrak f_j(g)\ne0\}}\|g\|$\\
15:& Compute $\widetilde\Delta_i=\mathfrak s_{i+1}(\widetilde{g},\frac{\beta}{N})-\mathfrak s_{i}(\widetilde{g},\frac{\beta}{N})$\\
16:& Compute $R:=\left| \frac{\widetilde\Delta_{i+1}-\widetilde\Delta_{i-2}}{\mathfrak f_j(\widetilde{g})}\right|$\\
17: & \textbf{if} $R<\check \rho$ \\

& \textbf{then}  $\bar{\rho}_j:= \check \rho$ \\
18: & \textbf{else if} $R>\hat \rho$ \\

& \quad\textbf{then}  $\bar{\rho}_j:= \hat \rho$ \\
19:& \quad   $\bar{\rho}_j:= R$\\
20:& \textbf{Return} $\estime_j,$ $\bar{\rho}_j$ and $\mathfrak f_j(g)$ for all $g\in\G$.\\
\bottomrule
\end{tabular}

\subsection{Description of Algorithm 2.}


Just as the previous algorithm, this one also focuses on detecting catalysts one by one. In this case, however, we wish to make a more explicit use of this fact. To this end, we lump the previous catalysts together with the background source and write  
\begin{equation} \label{Reducedf}
F(t) = h_je^{\rho_j(t_j - t)}\chi_{[t_j,\infty)}(t) + \tilde{\eta}(t)
\end{equation}
where 
\begin{equation} \label{backTilde}
\tilde{\eta}(t) = \sum_{i < j} h_ie^{\rho_i(t_i - t)}\chi_{[t_i,\infty)}(t) + \eta(t).
\end{equation}

Algorithm 2 is designed to detect a new catalyst entering the system after the time $\ell_0\beta$, where $\ell_0 \in \mathbb N$ is chosen by the user of the algorithm.  
For each time-step $\ell\beta > \ell_0\beta > t_{j-1}$, the function $\tilde{\eta}$ in \eqref{backTilde} is Lipschitz on $[\ell\beta,\infty)$ with (local) Lipschitz constant estimated by 
\[
L_\ell = L + \frac{H\hat{\rho}e^{(\ell_{0} - \ell)\beta}}{1 - e^{-\check{\rho}(4\beta + D)}} \le  L + \frac{H\hat{\rho}}{1 - e^{-\check{\rho} D}},
\]
where $L$ is, as before, the (global) Lipschitz constant of the original background source $\eta$ (see Appendix for the proof). 

Algorithm 2 uses a single set of weighted average samples of the solution of \eqref{model}:
\begin{equation}\label{eq_abstract_m}
m_{s,\ell}(g,\beta) = \int_{\ell\beta}^{(\ell + 1)\beta} e^{-st} \ang{u(t),(\overline{s}I-A^*)\frac{g}{\beta^2}}\; dt + \nu_{s,\ell}, \ \ell \in \Z,
\end{equation}
where $ s=\frac{2\pi ik}{\beta} \in \mathbb C$ for some user-chosen parameter $k\in \Z\setminus\{0\}$ and the term $\nu_{s,\ell}$ represents the measurement noise, which is assumed to satisfy $|\nu_{s,\ell}|\le \sigma$ for some known $\sigma \ge 0$. The typical choice of $k$ will be $k=1$. Once again, the noise bound $\sigma$ is assumed to be independent of $\beta$, leading to higher measurement SNR for smaller time-step sizes (see Remark \ref{SNRremark}).

We are now ready to provide a pseudo-code for Algorithm 2, which uses the values $\Delta_{s,\ell}(g,\beta) = m_{s,\ell}(g,\beta) - m_{0,\ell}(g,\beta)$ as its inputs.
\medskip

\begin{tabular}{rp{13cm}}
\toprule
\multicolumn{2}{p{13cm}}{\textbf{Algorithm 2.} Pseudo-code for approximating the data $(\rho_j,t_j, \ang{h_j,g})$.}\\
\midrule
1:& \textbf{Input:} Measurements $\Delta_{s,\ell}(g,\beta)$ for   $g\in\mathcal G$, initial time-step $\ell_0$, parameter value $s = 2\pi i k/\beta$.\\
2:& $\ell=\ell_0$\\
3:&\textbf{while} $\ell\beta<\mathcal T$\\
4:& \quad\textbf{update} \[L_\ell = L + \frac{H\hat{\rho}e^{(\ell_{0} - (\ell-2))\beta}}{1 - e^{-\check{\rho}(4\beta+D)}} \] \\
5:& \quad\textbf{define} $Q_n(g)= \frac{4n}{\pi}L_\ell\|g\| + 4\sigma$, $n\in\{1,2,3\}$ \\
6:& \quad\textbf{if} $\sup\limits_{g\in \G} \big(|\Delta_{s,\ell}(g,\beta) - \Delta_{s,\ell-1}(g,\beta)| - Q_1(g) - \hat{\rho}H\|g\|\big) > 0$\\

&\quad\textbf{then}\\
7:& \quad\quad \textbf{return} $\estime_j := \ell\beta$\\
8:& \quad\quad\textbf{for} $g \in \mathcal{G} $ \textbf{do} \\
9:& \quad\quad\quad \textbf{if} $|\Delta_{s,\ell+1}(g,\beta) - \Delta_{s,\ell-2}(g,\beta)| > Q_3(g)$ \\
&\quad\quad\quad \textbf{then}\\
10: & \quad\quad\quad\quad\textbf{define} $R=\frac{\Delta_{s, \ell + 2}(g,\beta) - \Delta_{s,\ell+1}(g,\beta)}{\beta(\Delta_{s,\ell+1}(g,\beta)-\Delta_{s,\ell-2}(g,\beta))}$\\
11: & \quad\quad\quad\quad\textbf{if} $R<\check \rho$ \\

& \quad\quad\quad\quad\textbf{then} \textbf{return} $\tilde{\rho}_j:= \check \rho$ \\
12: & \quad\quad\quad\quad\textbf{else if} $R>\hat \rho$ \\

& \quad\quad\quad\quad\quad\quad\textbf{then} \textbf{return} $\tilde{\rho}_j:= \hat \rho$ \\
13:& \quad\quad\quad\quad\quad\quad \textbf{else return}  $\tilde{\rho}_j:= R$\\
14:& \quad\quad\quad\quad \textbf{return} \[\mathfrak f_j(g) :=  \frac{\Tilde{\rho}_j(\tilde{\rho}_j+s)\beta^2}{s\cdot(e^{-2\Tilde{\rho}_j\beta} - e^{-\Tilde{\rho}_j\beta})} (\Delta_{s,\ell+1}(g,\beta)-\Delta_{s,\ell-2}(g,\beta)) \]\\
15:& \quad\quad\quad \textbf{else} \textbf{return} $\mathfrak f_j(g) := 0$\\
16:& \quad\quad $\ell = \ell + 5\beta + \lfloor D/\beta \rfloor$ \\
17:& \quad\textbf{else}\\ 
18:& \quad\quad $\ell = \ell+1$ \\
19:& \textbf{Output:} Data $(\Tilde{\rho}_j,\estime_j,\mathfrak f_j(g))$ for all $g\in\mathcal S$.\\ 
\bottomrule
\end{tabular}

\section{Derivation of algorithms and their guaranties}\label{sec4}
In this section, we present theorems outlining performance guarantees for Algorithms 1 and 2. The derivation of the algorithms will become apparent in the process of proving the theorems.  


\subsection{Approximation Bounds for the Source Recovery by Algorithm 1}

We begin with a theorem that collects theoretical error guarantees for Algorithm 1.

\begin{theorem}\label{thm1}
Assume $u_0\in D(A).$ Let $\beta$ be the sampling time step and, given a parameter $D>0$, assume a minimal distance $t_{j+1}-t_j\ge D+4\beta$ between any two  consecutive times 
$t_{j+1}$ and $t_j$ of catalyst intake. Assume also that $\sup_j\|h_j\|\le H,$   $\sup_{g \in \G,}\|g\|\le R$, and $\rho_j\in [\check{\rho}, \hat{\rho}]$ for some known bounds $\hat{\rho} \ge \check{\rho} >0$.
 Let $\mathfrak t_j$, $\bar \rho_j$, and $\mathfrak f_j(g)$ be the outputs of Algorithm 1. Then $|\mathfrak t_j-t_j|\le\beta$ and 
\begin{equation} \label{Esthj}
|\mathfrak f_j(g)-\langle h_j,g\rangle|\le \widetilde{Q}(g,\beta)+v_2(h_j,g,\beta)+\frac{HR}{e^{\check{\rho} D}-1}(1-e^{-3\hat{\rho}\beta})+3L\beta\|g\|+2\sigma+2Q(g,\beta)
\end{equation}
where $Q(g,\beta)=K\widetilde{Q}(g,\beta)$ 
 is the threshold in Algorithm 1 given by \eqref {Thresh}  
and $v_2(h_j,g,\beta)$ -- by \eqref {vj} below. 

Additionally, if the output $\mathfrak f_j(g)$ satisfies $|\mathfrak f_j(g)| > \widetilde Q(g,\beta)$,   then the relative error of the decay rate $\rho_j$ is bounded by
    \begin{equation}\label{Estrj}
    \left|\frac{\rho_j-\bar{\rho}_j}{\rho_j}\right|\le\frac{1}{{|\mathfrak f_j(g)|}}\left(\frac{HR}{e^{\check{\rho} D}-1}(1-e^{-3\hat{\rho}\beta})\frac{\hat{\rho}-\check{\rho}}{\check{\rho}}+L\|g\|(\frac{2}{\check{\rho}}+3\beta)+\frac{\epsilon(\widetilde{\beta})}{\check{\rho}}+4\frac{\sigma}{\check{\rho}}+2\sigma\right),
    \end{equation}
where $\widetilde{\beta}=\frac{\beta}{N}$ for a user-chosen parameter $N\in\mathbb N$ and $\epsilon({\widetilde{\beta}})$ represents the approximation error when estimating $u_t(t)$ at some given time. Moreover, we have $\epsilon(\widetilde{\beta})\to 0$ as $N\to\infty.$
\end{theorem}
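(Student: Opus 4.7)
\bigskip

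\noindent\textbf{Proof plan.}

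The first move is to eliminate the abstract solution $u$ from the measurements and express everything in terms of the forcing $F$ alone. Since $u_0\in D(A)$ and $\eta\in W^{1,1}_{loc}$, \eqref{model} has a classical solution and $\dot u = Au+F$ in the strong sense. Inserting this identity into \eqref{measurement1} and using the fundamental theorem of calculus yields the clean identity
\begin{equation*}
\mathfrak m_n(g,\beta) \;=\; \frac{1}{\beta}\int_{n\beta}^{(n+1)\beta} \langle F(t),g\rangle\, dt + \nu_n.
\end{equation*}
A parallel manipulation of \eqref{measurement2} gives
\begin{equation*}
\mathfrak s_n(g,\widetilde\beta) \;=\; \frac{1}{\widetilde\beta\,\beta}\int_{n\beta}^{n\beta+\widetilde\beta}\!\langle F(t),g\rangle\, dt \;+\; \tfrac{1}{\beta}\langle A(u(t)-u(n\beta)),g\rangle\text{-type remainder} \;+\;\widetilde\nu_n,
\end{equation*}
which, by the continuity of $\dot u$ on $[0,\mathcal T]$, approximates $\tfrac{1}{\beta}\langle F(n\beta),g\rangle$ with an error $\epsilon(\widetilde\beta)\to 0$ as $N\to\infty$. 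From this point on the analysis is essentially algebraic: everything is reduced to manipulations of averages and point values of $\langle F,g\rangle$.

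The next step is the detection bound $|\mathfrak t_j-t_j|\le\beta$. Using the separation hypothesis $t_{k+1}-t_k\ge D+4\beta$, I would split $F=\eta+h$ and bound $\Delta_i(g)=\mathfrak m_i-\mathfrak m_{i-1}$ in the two regimes. When no new catalyst is activated in $[(i-1)\beta,(i+1)\beta]$, the Lipschitz assumption on $\eta$ contributes $L\beta\|g\|$, the exponentially decaying tails of previously activated catalysts contribute at most $\frac{HR(1-e^{-3\hat\rho\beta})}{e^{\check\rho D}-1}$, and the noise contributes $2\sigma$; summing these quantities determines the threshold $\widetilde Q(g,\beta)$ in \eqref{Thresh} so that no false positive is triggered. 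Conversely, if a catalyst enters at $t_j\in[i\beta,(i+1)\beta)$, then $\mathfrak m_i-\mathfrak m_{i-1}$ picks up the integral $\tfrac{1}{\beta}\int_{t_j}^{(i+1)\beta}\langle h_j,g\rangle e^{-\rho_j(t-t_j)}dt$ for at least one $g\in\mathcal G$, and the factor $K$ in $Q(g,\beta)=K\widetilde Q(g,\beta)$ is chosen to force this term above $Q(g,\beta)$. Taking the minimum over $g$ in line 13 yields $\mathfrak t_j\le t_j+\beta$, and the extra confirmation with $|\mathfrak m_{i+1}-\mathfrak m_{i-2}|>\widetilde Q$ in line 7 gives $\mathfrak t_j\ge t_j-\beta$ (ruling out the adjacent interval).

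The content estimate \eqref{Esthj} follows by analyzing $\mathfrak f_j(g)=\mathfrak m_{i+1}-\mathfrak m_{i-2}$ once $\mathfrak t_j=i\beta$ has been set. The window $[(i+1)\beta,(i+2)\beta]$ lies entirely after $t_j$ and before $t_{j+1}$, so the dominant contribution is $\tfrac{1}{\beta}\int_{(i+1)\beta}^{(i+2)\beta}\langle h_j,g\rangle e^{-\rho_j(t-t_j)}dt$; the deviation of this quantity from $\langle h_j,g\rangle$ is exactly the systematic error $v_2(h_j,g,\beta)$ appearing in \eqref{vj}. All remaining residues (background Lipschitz, earlier catalysts, and noise) reproduce the terms already encountered in the threshold analysis, while the summand $2Q(g,\beta)$ reflects the indeterminacy allowed by the detection test. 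For the rate bound \eqref{Estrj}, the key identity, valid for a clean single-catalyst window, is
\begin{equation*}
\frac{\widetilde\Delta_{i+1}-\widetilde\Delta_{i-2}}{\mathfrak f_j(\widetilde g)}\;\longrightarrow\;-\rho_j\qquad(\text{as noise, background, and }\widetilde\beta\to 0),
\end{equation*}
which one obtains by rewriting both numerator and denominator as $\tfrac{1}{\beta}\langle h_j,\widetilde g\rangle\,e^{-\rho_j((i+1)\beta-t_j)}(e^{-\rho_j\beta}-1)$ (times $1$ and $-1/\rho_j$, respectively). Perturbing this identity by the background, the previous-catalyst tails, the $\widetilde\beta$-approximation error $\epsilon(\widetilde\beta)$, and the measurement noise, and then dividing by $\rho_j\ge\check\rho$ to produce the relative error, yields the right-hand side of \eqref{Estrj}; the clipping step (lines 17--19) only improves the bound. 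The principal obstacle is bookkeeping: the denominator $|\mathfrak f_j(\widetilde g)|>\widetilde Q(g,\beta)$ must be used both to bound $1/|\mathfrak f_j(\widetilde g)|$ and to keep the asymmetric exponential factors $1-e^{-\rho_j\beta}$ and $e^{\check\rho D}-1$ from blowing up, and this is what forces the apparently awkward factor $(\hat\rho-\check\rho)/\check\rho$ in the final bound.
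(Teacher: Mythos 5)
Your outline follows the paper's proof in its essential structure: rewriting $\mathfrak m_n$ as the local average $\frac{1}{\beta}\int_{n\beta}^{(n+1)\beta}\langle F(t),g\rangle\,dt+\nu_n$ via the classical-solution identity $\dot u = Au+F$, deriving the threshold $\widetilde Q$ from the Lipschitz, tail, and noise bounds on $\Delta_n$, splitting the content estimate across the detection cases, and extracting $\rho_j$ from the ratio $(\widetilde\Delta_{n+1}-\widetilde\Delta_{n-2})/\mathfrak f_j(g)$. These are precisely the steps the paper takes in Subsections 4.2.1--4.2.2 and Lemmas 4.3--4.6.

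However, two claims in your treatment of the detection bound $|\mathfrak t_j - t_j|\le\beta$ are incorrect and would propagate into errors if you fleshed out the sketch. First, you assert that the factor $K$ in $Q(g,\beta)=K\widetilde Q(g,\beta)$ is ``chosen to force'' the new-catalyst term above the threshold. In fact $K\ge 1$ only raises the threshold and so makes detection \emph{harder}, not easier. The paper never guarantees detection: its Case~(3) (Lemma~\ref{case3}) is devoted precisely to the situation where $|\langle h_j,g\rangle|$ is too small for either $\Delta_n(g)$ or $\Delta_{n+1}(g)$ to exceed $Q$, and $\mathfrak f_j(g)$ is then set to zero with the error bound absorbed into $2Q(g,\beta)$. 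Second, you attribute the lower bound $\mathfrak t_j\ge t_j-\beta$ to the confirmation test $|\mathfrak m_{i+1}-\mathfrak m_{i-2}|>\widetilde Q(g,\beta)$, but that test only governs whether $\mathfrak f_j(g)$ is set to a nonzero value; it has nothing to do with the timing. The actual source of the lower bound is the estimate \eqref{delta}: for indices $i$ strictly before the true intake interval (and sufficiently past $t_{j-1}$), $|\Delta_i(g)|\le\alpha\mathbf e(\beta)+L\beta\|g\|+2\sigma<\widetilde Q(g,\beta)\le Q(g,\beta)$, so no false positive can occur before $n\beta$, and hence $\mathfrak t_j=\min_g\mathfrak t_j(g)\ge n\beta>t_j-\beta$. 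Correcting these two points would bring your plan fully into agreement with the paper.
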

\medskip

The following remark is intended to clarify the quality of the estimates in the above theorem.

\begin {remark}${}$
\begin {enumerate}
\item When $\beta$ is sufficiently small, the error in \eqref{Esthj} will be smaller than $7\sigma$.
   
\item In the ideal scenario (where there is no background source and no measurement noise), the estimate in \eqref{Estrj} reduces to
    \[
    \left|\frac{\rho_j-\bar{\rho}_j}{\rho_j}\right|\le\frac{1}{|\mathfrak f_j(g)|}\left(\alpha\frac{\hat{\rho}-\check{\rho}}{\check{\rho}}\mathbf{e}(3\beta)+\frac{\epsilon(\widetilde{\beta})}{\check{\rho}}\right),\]
    where \( \mathbf e(t) = 1 - e^{-\hat\rho t} \) and $\alpha=\frac{1}{e^{\check{\rho} D}-1}HR.$
    
    \item The term $\frac{L}{\check\rho}$ in \eqref{Estrj} reflects the ability of Algorithm 1 to distinguish between the background source term and the catalysts $h_je^{-\rho_j(\cdot-t_j)}\chi_{[t_j, \infty)}$, $j=1,2,\ldots$. The relative error improves as $\frac{L}{\check\rho}$ decreases. On the other hand, when $L$ is close to $\check\rho$, the background source and the catalysts behave similarly and are difficult to distinguish.
\end{enumerate}
\end{remark}

\subsection {Details of Algorithm 1}

To detect the time of a catalyst intake in the presence of measurement noise and the background source, Algorithm 1 utilizes thresholds denoted by \(Q(g,\beta)\) for $g\in \mathcal G$ and $\beta > 0$. They are defined by
\begin{equation} \label{Thresh}
Q(g,\beta):=K\widetilde{Q}(g,\beta)=K\left((\alpha+H\|g\|)\mathbf{e}(\beta)+L\beta\|g\|+2\sigma\right),
\end{equation}
where  $\alpha=\frac{1}{e^{\check{\rho} D}-1}HR,$ \( \mathbf e(t) = 1 - e^{-\hat\rho t},\) and \( K\ge 1 \) is a  fixed parameter chosen by the user of the algorithm. As before, \( L \) is the Lipschitz constant of the background source \( \eta \) as given in \eqref{LipBackgrnd}, \( \sigma \) represents the upper bound for the sampling noise in \eqref{noiseestimate}, and \( \beta \) is the time  step size in sampling scheme. 

To see how the threshold is derived, we employ the following notation:
\( f_n \in \HH \) and \( \tau_n \in [n\beta, (n+1)\beta) \) for each \( n \) as follows:
\begin{equation}\label{f_n}
\left\{
\begin{array}{lll}
f_n = h_j, &\tau_n = t_j, & \text{if the \( j \)-th intake time  occurred in } [n\beta, (n+1)\beta);\\
f_n = 0, &\tau_n = n\beta, & \text{if no new source term emerged in } [n\beta, (n+1)\beta). 
\end{array}
\right.
\end{equation}
Using \eqref{f_n} and \eqref {model}, we rewrite the measurements in \eqref{measurement1} as
\begin{equation} \label {noiseEq}
    \begin{split}
    \mathfrak m_{n}(g,\beta)
        &=\sum\limits_{\tau_k<n\beta} \langle f_k,g\rangle e^{-\rho_k(n\beta-\tau_k)}\frac{e^{-\rho_k\beta}-1}{-\rho_k\beta}+ \langle f_n,g\rangle\frac{e^{-\rho_n((n+1)\beta-\tau_n)}-1}{-\rho_n\beta}\\
        &\quad+\int_0^{\beta}\langle\eta(n\beta+t),\frac{g}{\beta}\rangle dt+\nu_n.
    \end{split}
\end{equation}

To estimate the catalyst intake time $t_j$, we use the quantity $\Delta_n(g) = \mathfrak m_{n} - \mathfrak m_{n-1}$ 
rather than $\mathfrak m_{n}$ alone. Considering this difference allows us to mitigate the effect of the background source on the process. Indeed, if there is no new catalyst entering the system during the time interval $[n\beta, (n+1)\beta)$ (i.e., $f_n=0$ as in \eqref{f_n}), then
\begin{equation*}
    \begin{split}
\Delta_n(g)&=\mathfrak m_{n}-\mathfrak m_{n-1}\\
 &=\sum\limits_{\tau_k<(n-1)\beta} \langle f_k,g\rangle \frac{e^{-\rho_k\beta}-1}{-\rho_k\beta}(e^{-\rho_k(n\beta-\tau_k)}-e^{-\rho_k((n-1)\beta-\tau_k)})\\
 &\quad+\int_0^{\beta}\langle\eta(n\beta+t)-\eta((n-1)\beta+t),\frac{g}{\beta}\rangle dt+\nu_n-\nu_{n-1}\\
 &=\sum\limits_{\tau_k<(n-1)\beta} \langle f_k,g\rangle \frac{e^{-\rho_k\beta}-1}{-\rho_k\beta}e^{-\rho_k((n-1)\beta-\tau_k)}(e^{-\rho_k\beta}-1)\\
 &\quad+\int_0^{\beta}\langle\eta(n\beta+t)-\eta((n-1)\beta+t),\frac{g}{\beta}\rangle dt+\nu_n-\nu_{n-1}.
\end{split}
\end{equation*}

Using the intake separation assumption \eqref {tsep}, we only analyze $|\Delta_n(g)|$ for intervals $[n\beta, (n+1)\beta)$ that are at least $(3+\lfloor \frac{D}{\beta}\rfloor)\beta$ behind the previously estimated intake time $t_{j-1}$. Because of this fact and the assumption that no  new catalyst enters in $[n\beta, (n+1)\beta)$, we obtain the following estimate for $|\Delta_n(g)|$:

\begin{equation}\label{delta}
    \begin{split} 
|\Delta_n(g)|
&\le\left|\sum\limits_{\tau_k<(n-1)\beta} \langle f_k,g\rangle \frac{e^{-\rho_k\beta}-1}{-\rho_k\beta}e^{-\rho_k((n-1)\beta-\tau_k)}(e^{-\rho_k\beta}-1)\right|\\
&\quad+\left|\int_0^{\beta}\langle\eta(n\beta+t)-\eta((n-1)\beta+t),\frac{g}{\beta}\rangle dt\right|+|\nu_n-\nu_{n-1}|\\
&\le \sum\limits_{\tau_k<(n-1)\beta}|\langle f_k,g\rangle e^{-\rho_k((n-1)\beta-\tau_k)}|\mathbf{e}(\beta)+\|\eta(n\beta+t)-\eta((n-1)\beta+t)\|\|g\|+2\sigma\\
&\le \sum\limits_{k=1}^{\infty}\|f_k\|\|g\|e^{-\check{\rho} kD}\mathbf{e}(\beta)+L\beta\|g\|+2\sigma\\
&\le\frac{e^{-\check{\rho} D}}{1-e^{-\check{\rho} D}}\mathbf{e}(\beta)HR+L\beta\|g\|+2\sigma\\
&\le \frac{1}{e^{\check{\rho} D}-1}\mathbf{e}(\beta)HR+L\beta\|g\|+2\sigma\\
&=\alpha\mathbf{e}(\beta)+L\beta\|g\|+2\sigma
\end{split}
\end{equation}
where $\alpha=\frac{1}{e^{\check{\rho} D}-1}HR$ and $\mathbf{e}(\beta)=1-e^{-\hat{\rho}\beta}.$
The last term of the set of inequalities above partially justifies our choice of the thresholds $Q(g,\beta)$ in \eqref{Thresh}. 

On the other hand, in the event of a new catalyst entering the system within the time interval $[n\beta, (n+1)\beta)$ (i.e., $f_n=h_j \neq 0$), it's crucial that if it cannot be detected by $\Delta_n(g)$ or $\Delta_{n+1}(g)$, it stays undetectable in all subsequent measurements. Otherwise, it could lead to a substantial error in the estimation of the intake time. Thus, we need to ensure $\Delta_i(g)$ remain sufficiently small for $i\ge n+2$ until  the following intake time $t_{j+1}.$ We estimate 
\begin{equation}\label{tildethreshold}
\begin{split}
 |\Delta_{i}(g)|&=\left|\sum\limits_{\tau_k<(n-1)\beta} \langle f_k,g\rangle \frac{e^{-\rho_k\beta}-1}{-\rho_k\beta}e^{-\rho_k((i-1)\beta-\tau_k)}(e^{-\rho_k\beta}-1)\right|\\
 &\quad+\left|\langle h_j,g\rangle \frac{e^{-\rho_k\beta}-1}{-\rho_k\beta}e^{-\rho_k((i-1)\beta-t_j)}(e^{-\rho_k\beta}-1)\right|\\
 &\quad+\left|\int_0^{\beta}\langle\eta(i\beta+t)-\eta((i-1)\beta+t),\frac{g}{\beta}\rangle dt\right|+\left|\nu_i-\nu_{i-1}\right|   \\
 &\le \alpha\mathbf{e}(\beta)+\|f_j\|\|g\|(1-e^{-\hat{\rho}\beta})+L\beta\|g\|+2\sigma\\
 &\le\alpha\mathbf{e}(\beta)+H\|g\|\mathbf{e}(\beta)+L\beta\|g\|+2\sigma:=\widetilde{Q}(g,
 \beta).
\end{split}
\end{equation}
The above set of inequalities serves as justification for our selection of the thresholds $\widetilde Q(g,\beta)$ and completely justifies \eqref{Thresh}.

 If the value of $|\Delta_n(g)|$ 
 is larger than the threshold $Q$, then we know that a new catalyst entered the system in the time interval $[(n-1)\beta, (n+1)\beta)$. The occurrence of this case doesn't guarantee, however, that $t_j$ is within the interval $[n\beta,(n+1)\beta)$. It is possible that $t_j$ belongs to the interval $[(n-1)\beta, n\beta)$ with $t_j$ sufficiently close to $n\beta$. Nevertheless, in both scenarios, Algorithm 1 records estimates $\mathfrak t_j(g)$ for the intake time $t_j$ as $n\beta.$ There is an extra intricacy that occurs due to the fact that there might be instances where $\mathfrak t_j(g_{i_1})=n\beta$ and $\mathfrak t_j(g_{i_2})=(n+1)\beta$ for distinct $g_{i_1}, g_{i_2}\in\G$. To handle this, Algorithm 1  defines $\mathfrak t_j:=\min\limits_{g\in\G}\mathfrak t_j(g)$. As a result, we get the error estimate $|t_j-\mathfrak t_j|\leq \beta.$ 
 
 Next,  We use the samples before and after the interval of intake to estimate the parameter values of the catalyst.
 From the above discussion, it is clear that given any $g\in\G$, the detection of $h_j$ is only possible through either $\Delta_n(g)$ or $\Delta_{n+1}(g)$. 
 In particular, if the values of  $|\Delta_n(g)|$ and $|\Delta_{n+1}(g)|$ are smaller than or equal to the threshold $Q$ for each $g\in\G$, the output of Algorithm 1 will indicate that no new catalyst entered the system during the period. It may, however, be the case that 
 the value $|\langle h_j, g\rangle|$ happened to be too small for the ingredient to be detected. 
 Thus, to establish estimate \eqref{Esthj} for the error of recovery of $\langle h_j, g\rangle$ with $t_j\in[n\beta, (n+1)\beta)$, we need to distinguish between the following three outcomes of Algorithm 1:

\noindent \textbf{Cases:}\label{Cases}
\begin{enumerate}
    \item \label{c1} $|\Delta_n(g)|>Q,$ so that Algorithm 1 sets $\mathfrak t_j(g)=n\beta.$
    \item \label{c2} $|\Delta_n(g)|\le Q$ and $|\Delta_{n+1}(g)|>Q$, 
    so that Algorithm 1 sets
    $\mathfrak t_j(g)=(n+1)\beta.$
    \item \label{c3} $|\Delta_n(g)|\le Q$ and $|\Delta_{n+1}(g)|\le Q$, so that Algorithm 1 does not detect the intake 
    (for example if $\|h_j\|$ is too small).
\end{enumerate}

From the point of view of the algorithm itself, however, Cases (1) and (2) are indistinguishable. Moreover, for the estimate \eqref{Estrj} to be meaningful, the estimates $\mathfrak f_j(g)$ of the quantities $ \langle h_j, g \rangle$, $g\in\G$, should have a lower bound, which Algorithm 1 sets as $\widetilde{Q}(g,\beta)$ defined in \eqref {tildethreshold}. Thus, Algorithm 1 defines $\mathfrak f_j(g)$ by

\begin{equation*}
\mathfrak f_j(g):=
	\begin{cases}
	\mathfrak{m}_{n+1}(g,\beta) - \mathfrak{m}_{n-2}(g,\beta),\ \text{Case (1) with } \mathfrak{m}_{n+1}(g,\beta) - \mathfrak{m}_{n-2}(g,\beta)>\widetilde{Q}(g,\beta),\\
    \mathfrak{m}_{n+2}(g,\beta) - \mathfrak{m}_{n-1}(g,\beta),\ \text{Case (2) with } \mathfrak{m}_{n+2}(g,\beta) - \mathfrak{m}_{n-1}(g,\beta)>\widetilde{Q}(g,\beta),\\
	0,\  \text{otherwise}.
	\end{cases}
\end{equation*}
Once $\mathfrak f_j(g) \neq 0$, Algorithm 1 chooses $\widetilde g$ with the minimal norm among all such $g\in\G$ 
and approximates the derivative of $u$ near the intake time using the measurements $\mathfrak{s}_{i}(\widetilde g,\frac{\beta}{N})$. 
More precisely, Algorithm 1 uses the quantity $\widetilde\Delta_i=\mathfrak s_{i+1}(\widetilde{g},\frac{\beta}{N})-\mathfrak s_{i}(\widetilde{g},\frac{\beta}{N})$ and estimates    
the decay rate $\rho_j$  by $\bar{\rho}_j:=\left| \frac{\widetilde\Delta_{i+1}-\widetilde\Delta_{i-2}}{\mathfrak f_j(\widetilde{g})}\right|$.


The reasons behind choosing the above estimates for $ \langle h_j, g \rangle$ and $\rho_j$ will be revealed in the following subsection.

\subsection {Proof of Theorem \ref {thm1}}
The proof of Theorem \ref {thm1} has two major parts. In Subsection \ref {sec1}, we estimate the quantity $ \langle h_j, g \rangle$, whereas subsection \ref {sec2} covers the approximation of the decay rates $\rho_j$. 

\subsubsection{Derivation of \eqref {Esthj} in Theorem \ref {thm1}}\label{sec1}
We start with the following Lemma.
\begin{lemma}\label{lem1} 
Assume that $t_j\in[n\beta, (n+1)\beta)$ and 
\begin{equation} \label {vj}
v_k(h_j,g,\beta)=\left|\langle h_j,g\rangle e^{-\rho_j((n+k)\beta-t_j)}\frac{e^{-\rho_j\beta}-1}{-\rho_j\beta}-\langle h_j,g\rangle\right|,\quad k=1,2.
\end{equation}
Then
$v_k(h_j,g,\beta)\to 0$ as $\beta\to 0.$
\end{lemma}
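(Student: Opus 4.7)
The plan is to bound $v_k(h_j,g,\beta)$ by a quantity that visibly tends to zero as $\beta\to 0$, exploiting the fact that both the exponential factor and the divided-difference factor approach $1$. The key observation is that although the index $n$ depends on $\beta$, the quantity $(n+k)\beta - t_j$ is controlled: since $t_j \in [n\beta,(n+1)\beta)$, we have $(n+k)\beta - t_j \in [(k-1)\beta, k\beta]$, so for each fixed $k\in\{1,2\}$ this difference is bounded above by $k\beta$, which tends to zero.

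First I would factor $\langle h_j,g\rangle$ out of the expression to rewrite
\[
v_k(h_j,g,\beta)=|\langle h_j,g\rangle|\cdot\left|e^{-\rho_j((n+k)\beta-t_j)}\cdot\frac{1-e^{-\rho_j\beta}}{\rho_j\beta}-1\right|,
\]
and bound $|\langle h_j,g\rangle|\le H\|g\|$ by Cauchy–Schwarz together with \eqref{Bddh}. Then I would add and subtract $1$ inside to split the inner quantity via the triangle inequality as
\[
\left|e^{-\rho_j((n+k)\beta-t_j)}\cdot\frac{1-e^{-\rho_j\beta}}{\rho_j\beta}-1\right|
\le \left|\frac{1-e^{-\rho_j\beta}}{\rho_j\beta}-1\right|+\left|e^{-\rho_j((n+k)\beta-t_j)}-1\right|\cdot\left|\frac{1-e^{-\rho_j\beta}}{\rho_j\beta}\right|.
\]

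For the first summand, a standard Taylor expansion of the exponential gives $\frac{1-e^{-\rho_j\beta}}{\rho_j\beta}=1-\frac{\rho_j\beta}{2}+O(\beta^2)$, so this term is $O(\rho_j\beta)\le O(\hat\rho\beta)\to 0$ uniformly in $j$. For the second summand, the factor $\frac{1-e^{-\rho_j\beta}}{\rho_j\beta}$ is bounded by $1$ (since $\frac{1-e^{-x}}{x}\le 1$ for $x\ge 0$), and the factor $|e^{-\rho_j((n+k)\beta-t_j)}-1|$ is bounded by $1-e^{-\rho_j k\beta}\le 1-e^{-\hat\rho k\beta}$ using the bound $(n+k)\beta-t_j\le k\beta$ and monotonicity of $1-e^{-x}$ on $[0,\infty)$; this too tends to zero as $\beta\to 0$.

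Combining these observations, I obtain an explicit bound of the form
\[
v_k(h_j,g,\beta)\le H\|g\|\left(\tfrac{\hat\rho\beta}{2}+o(\beta)+(1-e^{-\hat\rho k\beta})\right),
\]
which tends to zero as $\beta\to 0$, completing the proof. I do not anticipate any real obstacle here; the only subtlety worth flagging is that $n=n(\beta)$ depends on $\beta$, but the bound $(n+k)\beta-t_j\le k\beta$ neutralizes that dependence and makes the convergence uniform in the choice of $n$.
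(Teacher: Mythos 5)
Your proof is correct and follows essentially the same route as the paper: both arguments rest on the two facts that $(n+k)\beta - t_j$ is bounded by a fixed multiple of $\beta$ (hence $e^{-\rho_j((n+k)\beta-t_j)}\to 1$) and that $\frac{1-e^{-\rho_j\beta}}{\rho_j\beta}\to 1$ as $\beta\to 0$. The only cosmetic difference is that you split via the triangle inequality, whereas the paper observes that $0<\frac{1-e^{-\rho_j\beta}}{\rho_j\beta}<1$ so the inner quantity is positive, drops the absolute value, and bounds it directly by $1 - e^{-2\rho_j\beta}\frac{1-e^{-\rho_j\beta}}{\rho_j\beta}$ using $(n+k)\beta-t_j\le 2\beta$; your sharper observation that $(n+k)\beta-t_j\le k\beta$ is valid but unnecessary for the limit.
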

\begin{proof}
Observe that due to $0<\frac{e^{-\rho_j\beta}-1}{-\rho_j\beta}<1$ we have
\begin{equation}\label{tempeq}
  \begin{split}
v_k(h_j,g,\beta)&=|\langle h_j,g\rangle|
\left(1-e^{-\rho_j((n+k)\beta-t_j)}\frac{e^{-\rho_j\beta}-1}{-\rho_j\beta}\right)
\\
&\le\|h_j\|\|g\|\left(1-e^{-\rho_j((n+k)\beta-t_j)}\frac{e^{-\rho_j\beta}-1}{-\rho_j\beta}\right).
\end{split}  
\end{equation}
Additionally, $0<(n+k)\beta-t_j\le 2\beta$ 
and 
$\frac{e^{-\rho_j\beta}-1}{-\rho_j\beta}\to 1$ as $\beta\to 0.$ 
Therefore,
\[
0<1-e^{-\rho_j((n+k)\beta-t_j)}\frac{e^{-\rho_j\beta}-1}{-\rho_j\beta}\le1-e^{-2\rho_j\beta}\frac{e^{-\rho_j\beta}-1}{-\rho_j\beta}\to 0 \text{ as }\beta\to 0.
\]
Hence $v_k(h_j,g,\beta)\to 0$ as $\beta\to 0.$  
\end{proof}

\begin{remark}\label{easeyrem}
    In view of the equality in \eqref{tempeq}, the
    inequality  $(n+2)\beta-t_j> (n+1)\beta-t_j$ implies that     $v_1(h_j, g, \beta)\leq v_2(h_j, g, \beta)$.
\end{remark}

Next, we will analyze the three cases described in Subsection  \ref{Cases}. Recall that we assumed
 that the $j$-th intake time $t_j$ occurred within the time interval $[n\beta, (n+1)\beta)$ (i.e., $f_n=h_j$) and $Q=K\widetilde{Q}(g,\beta)=K((\alpha+H\|g\|)\mathbf{e}(\beta)+L\beta\|g\|+2\sigma)$, where $K\ge 1$, is the threshold given by \eqref{Thresh}.  

For Case (1), we obtain the following result.
\begin{lemma}\label{case1}
Assume $|\Delta_n(g)|=|\mathfrak m_n-\mathfrak m_{n-1}|>Q.$ If $|\mathfrak m_{n+1}-\mathfrak m_{n-2}|>\widetilde{Q},$ $\langle h_j,g\rangle$ can be approximated by $\mathfrak f_j(g)=\mathfrak m_{n+1}-\mathfrak m_{n-2}$ and we have 
\begin{equation}
\begin{split} 
|\mathfrak f_j(g)-\langle h_j,g\rangle|\le v_1(h_j,g,\beta)+\alpha\mathbf{e}(3\beta)+3L\beta\|g\|+2\sigma,
\end{split}
\end{equation}
where $v_1(h_j,g,\beta)$ is given in Lemma \ref{lem1}. Otherwise, we set $\mathfrak f_j(g)=0$ and
\begin{equation}
  \begin{split}
|\langle h_j,g\rangle|\le \widetilde{Q}(g,\beta)+v_1(h_j,g,\beta)+\alpha\mathbf{e}(3\beta)+3L\beta\|g\|+2\sigma.
  \end{split}
\end{equation}
\end{lemma}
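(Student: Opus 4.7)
The plan is to substitute the explicit representation \eqref{noiseEq} of the measurements into the expression $\mathfrak m_{n+1}-\mathfrak m_{n-2}$, isolate the single term that carries $\langle h_j,g\rangle$, and bound each of the remaining contributions separately.

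First I would note that since $t_j\in[n\beta,(n+1)\beta)$ and the separation hypothesis \eqref{tsep} forces $t_{j-1}<(n-3)\beta-D$ and $t_{j+1}\geq (n+4)\beta+D$, neither of the intervals $[(n-2)\beta,(n-1)\beta)$ nor $[(n+1)\beta,(n+2)\beta)$ contains a catalyst intake; hence $f_{n-2}=f_{n+1}=0$ and the ``current-interval'' contributions in both $\mathfrak m_{n-2}$ and $\mathfrak m_{n+1}$ vanish. Comparing the two remaining past-catalyst sums, the indices with $\tau_k<(n-2)\beta$ appear identically in both measurements (up to shifted exponentials), whereas the single catalyst $h_j$ at $\tau_n=t_j$ appears only in $\mathfrak m_{n+1}$. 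Subtracting and regrouping yields
\begin{equation*}
\mathfrak m_{n+1}-\mathfrak m_{n-2}-\langle h_j,g\rangle = S_{\mathrm{tail}}+S_{\mathrm{self}}+S_\eta+S_\nu,
\end{equation*}
where $S_{\mathrm{tail}}=\sum_{\tau_k<(n-2)\beta}\langle f_k,g\rangle\frac{e^{-\rho_k\beta}-1}{-\rho_k\beta}e^{-\rho_k((n-2)\beta-\tau_k)}(e^{-3\rho_k\beta}-1)$, $S_{\mathrm{self}}=\langle h_j,g\rangle\bigl(e^{-\rho_j((n+1)\beta-t_j)}\frac{e^{-\rho_j\beta}-1}{-\rho_j\beta}-1\bigr)$, $S_\eta=\int_0^\beta\langle \eta((n+1)\beta+t)-\eta((n-2)\beta+t),g/\beta\rangle\,dt$, and $S_\nu=\nu_{n+1}-\nu_{n-2}$.

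The four bounds then follow from the same tools used in \eqref{delta} and Lemma \ref{lem1}. Using $0<\frac{e^{-\rho_k\beta}-1}{-\rho_k\beta}<1$ together with $|e^{-3\rho_k\beta}-1|\leq\mathbf e(3\beta)$ and the geometric sum over separations of at least $D$, one gets $|S_{\mathrm{tail}}|\leq\alpha\mathbf e(3\beta)$; the absolute value $|S_{\mathrm{self}}|$ equals $v_1(h_j,g,\beta)$ by the very definition \eqref{vj}; the Lipschitz estimate \eqref{LipBackgrnd} applied over a time shift of $3\beta$ yields $|S_\eta|\leq 3L\beta\|g\|$; and the noise bound \eqref{noiseestimate} gives $|S_\nu|\leq 2\sigma$. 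Summing proves the first announced inequality when $\mathfrak f_j(g)=\mathfrak m_{n+1}-\mathfrak m_{n-2}$.

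For the alternative branch, where $|\mathfrak m_{n+1}-\mathfrak m_{n-2}|\leq\widetilde Q(g,\beta)$ and Algorithm 1 sets $\mathfrak f_j(g)=0$, I would simply apply the triangle inequality
\begin{equation*}
|\langle h_j,g\rangle|\leq |\mathfrak m_{n+1}-\mathfrak m_{n-2}-\langle h_j,g\rangle|+|\mathfrak m_{n+1}-\mathfrak m_{n-2}|,
\end{equation*}
combining the bound just derived with the thresholding hypothesis to conclude. The computation itself is entirely routine; the only delicate point is the bookkeeping of which catalysts appear in the sums for $\mathfrak m_{n+1}$ versus $\mathfrak m_{n-2}$, and this is precisely what the separation assumption \eqref{tsep}, with its width of $4\beta+D$ accommodating the three-step window used here, is engineered to guarantee.
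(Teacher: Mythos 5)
Your proposal is correct and follows essentially the same route as the paper: you expand $\mathfrak m_{n+1}-\mathfrak m_{n-2}$ via \eqref{noiseEq}, isolate the $\langle h_j,g\rangle$ term, bound the tail of past catalysts, the self-term, the background-source increment, and the noise exactly as in \eqref{appro1}, and handle the thresholded branch by the same triangle inequality. The only addition is your explicit bookkeeping that \eqref{tsep} forces $f_{n-2}=f_{n+1}=0$, which the paper leaves implicit when writing \eqref{nequ}.
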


\begin{proof}
Observe that using \eqref {noiseEq} we have 
\begin{equation}\label{nequ}
\begin{split} 
\mathfrak m_{n+1} -\mathfrak m_{n-2} 
&=\sum\limits_{\tau_k<(n-2)\beta} \langle f_k,g\rangle \frac{e^{-\rho_k\beta}-1}{-\rho_k\beta}e^{-\rho_k((n-2)\beta-\tau_k)}(e^{-3\rho_k\beta}-1) \\
&+\langle h_j,g\rangle e^{-\rho_j((n+1)\beta-t_j)}\frac{e^{-\rho_j\beta}-1}{-\rho_j\beta}\\
&+\int_0^\beta\langle\eta((n+1)\beta+t)-\eta((n-2)\beta+t),\frac{g}{\beta}\rangle dt+\nu_{n+1}-\nu_{n-2}.
\end{split}
\end{equation}

Therefore, if $|\mathfrak m_{n+1}-\mathfrak m_{n-2}|>\widetilde{Q},$ then $\mathfrak f_j(g)=\mathfrak m_{n+1}-\mathfrak m_{n-2}$, and we get 
\begin{equation}\label{appro1}
\begin{split} 
|\mathfrak f_j(g)-\langle h_j,g\rangle|
&\le\left|\sum\limits_{\tau_k<(n-2)\beta} \langle f_k,g\rangle \frac{e^{-\rho_k\beta}-1}{-\rho_k\beta}e^{-\rho_k((n-2)\beta-\tau_k)}(e^{-3\rho_k\beta}-1)\right|\\
&\quad+\left|\langle h_j,g\rangle e^{-\rho_j((n+1)\beta-t_j)}\frac{e^{-\rho_j\beta}-1}{-\rho_j\beta}-\langle h_j,g\rangle\right|\\
&\quad+\left|\int_0^\beta\langle\eta((n+1)\beta+t)-\eta((n-2)\beta+t),\frac{g}{\beta}\rangle dt\right|+\left|\nu_{n+1}-\nu_{n-2}\right|\\
&\le v_1(h_j,g,\beta)+\alpha\mathbf{e}(3\beta)+3L\beta\|g\|+2\sigma,
\end{split}
\end{equation}
where $v_1(h_j,g,\beta)=\left|\langle h_j,g\rangle e^{-\rho_j((n+1)\beta-t_j)}\frac{e^{-\rho_j\beta}-1}{-\rho_j\beta}-\langle h_j,g\rangle\right|$ is given in Lemma \ref{lem1} and we estimated other terms as in \eqref{delta}.

If $|\mathfrak m_{n+1}-\mathfrak m_{n-2}|\le\widetilde{Q},$ then $\mathfrak f_j(g)=0$, and using \eqref{nequ} we get 
\begin{equation*}
  \begin{split}
|\langle h_j,g\rangle|&
\le|\mathfrak m_{n+1}-\mathfrak m_{n-2}|+|(\mathfrak m_{n+1}-\mathfrak m_{n-2})-\langle h_j,g\rangle|\\
&\le \widetilde{Q}(g,\beta)+v_1(h_j,g,\beta)+\alpha\mathbf{e}(3\beta)+3L\beta\|g\|+2\sigma.
  \end{split}
\end{equation*}
The lemma is proved.
\end{proof}

Through similar computations, we can establish a  lemma pertaining to Case (2).
\begin{lemma}\label{case2}
Assume $|\Delta_n(g)|=|\mathfrak m_n-\mathfrak m_{n-1}|\le Q$ and $|\Delta_{n+1}(g)|=|\mathfrak m_{n+1}-\mathfrak m_{n}|>Q.$ If $|\mathfrak m_{n+1}-\mathfrak m_{n-2}|>\widetilde{Q},$ $\langle h_j,g\rangle$ can be approximated by $\mathfrak f_j(g)=\mathfrak m_{n+2}-\mathfrak m_{n-1}$ and we have 
\begin{equation}
\begin{split} 
|\mathfrak f_j(g)-\langle h_j,g\rangle|\le v_2(h_j,g,\beta)+\alpha\mathbf{e}(3\beta)+3L\beta\|g\|+2\sigma,
\end{split}
\end{equation}
where $v_2(h_j,g,\beta)$ is given in Lemma \ref{lem1}. Otherwise, we set $\mathfrak f_j(g)=0$ and
\begin{equation}
  \begin{split}
|\langle h_j,g\rangle|\le \widetilde{Q}(g,\beta)+v_2(h_j,g,\beta)+\alpha\mathbf{e}(3\beta)+3L\beta\|g\|+2\sigma.
  \end{split}
\end{equation}
\end{lemma}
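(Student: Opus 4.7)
The plan is to mimic the proof of Lemma \ref{case1} with all indices shifted by one, since Case (2) differs from Case (1) only in that the intake time $t_j\in[n\beta,(n+1)\beta)$ is detected one step later by the thresholding procedure. First I would apply \eqref{noiseEq} to expand $\mathfrak m_{n+2}-\mathfrak m_{n-1}$ in direct analogy with \eqref{nequ}: the tail over old catalysts is indexed by $\tau_k<(n-1)\beta$ with factor $e^{-\rho_k((n-1)\beta-\tau_k)}(e^{-3\rho_k\beta}-1)$, the $h_j$ contribution is
\[
\langle h_j,g\rangle\, e^{-\rho_j((n+2)\beta-t_j)}\frac{e^{-\rho_j\beta}-1}{-\rho_j\beta},
\]
the background-source piece is $\int_0^\beta\langle\eta((n+2)\beta+t)-\eta((n-1)\beta+t),g/\beta\rangle\,dt$, and the noise contribution is $\nu_{n+2}-\nu_{n-1}$.

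Next I would estimate $|\mathfrak f_j(g)-\langle h_j,g\rangle|$ by the triangle inequality, splitting the right-hand side into four pieces. The catalyst-tail sum is bounded by $\alpha\mathbf{e}(3\beta)$ using the geometric-series estimate from \eqref{delta} together with the separation hypothesis \eqref{tsep}; the $h_j$ term is exactly $v_2(h_j,g,\beta)$ by \eqref{vj}, since the lag is now $(n+2)\beta-t_j$; the Lipschitz bound \eqref{LipBackgrnd} over a window of length $3\beta$ contributes $3L\beta\|g\|$; and the noise bound \eqref{noiseestimate} contributes $|\nu_{n+2}-\nu_{n-1}|\le 2\sigma$. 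Summing these yields the first asserted inequality.

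For the \emph{otherwise} branch, where the threshold test fails and $\mathfrak f_j(g)=0$, I would use the triangle inequality
\[
|\langle h_j,g\rangle|\le |\mathfrak m_{n+2}-\mathfrak m_{n-1}|+\bigl|(\mathfrak m_{n+2}-\mathfrak m_{n-1})-\langle h_j,g\rangle\bigr|,
\]
bounding the first summand by $\widetilde Q(g,\beta)$ via the failed threshold test, and the second by the estimate just established. This delivers the second inequality.

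No serious technical obstacle is anticipated; the argument is essentially a relabeling of \eqref{nequ}--\eqref{appro1}, with $n\mapsto n+1$ throughout, and the $3\beta$ time gap between the two measurements forming $\mathfrak f_j(g)$ is preserved (since $(n+2)-(n-1)=(n+1)-(n-2)=3$), so the $\alpha\mathbf{e}(3\beta)$ and $3L\beta\|g\|$ terms are unchanged. The one point worth highlighting is the replacement $v_1\mapsto v_2$: by Remark \ref{easeyrem}, the strictly larger lag $(n+2)\beta-t_j>(n+1)\beta-t_j$ enlarges this single error term, which is the only place in the final bound where the Case (2) estimate is weaker than the Case (1) estimate.
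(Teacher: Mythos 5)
Your proof is correct and follows exactly the route the paper intends: the paper omits the proof of Lemma~\ref{case2}, saying only that it follows by ``similar computations'' to Lemma~\ref{case1}, and your index-shift $n\mapsto n+1$ reproduces precisely that relabeling, with the tail bound still $\alpha\mathbf{e}(3\beta)$ (even slightly sharper, since $\tau_k<(n-1)\beta$), the $h_j$ term giving $v_2$ rather than $v_1$ via the lag $(n+2)\beta-t_j$, and the $3\beta$ gap preserving the background and noise contributions. One point worth flagging: the lemma statement's threshold condition reads $|\mathfrak m_{n+1}-\mathfrak m_{n-2}|>\widetilde Q$, which appears to be a typo for $|\mathfrak m_{n+2}-\mathfrak m_{n-1}|>\widetilde Q$ (consistent with the algorithm's earlier definition of $\mathfrak f_j(g)$ in Case (2)), and your ``otherwise'' branch correctly uses the latter test to bound $|\mathfrak m_{n+2}-\mathfrak m_{n-1}|$ by $\widetilde Q$.
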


Finally, Case (3) is covered by the following lemma.

\begin{lemma}\label{case3}
If $|\Delta_n(g)|\le Q$ and $|\Delta_{n+1}(g)|\le Q,$ let $\mathfrak f_j(g)=0,$ then we have
\begin{equation}
    |\mathfrak f_j(g)-\langle h_j,g\rangle|\le v_1(h_j,g,\beta)+\alpha\mathbf{e}(2\beta)+2L\beta\|g\|+2\sigma+2Q(g,\beta).
\end{equation}
\end{lemma}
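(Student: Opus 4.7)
Since Case (3) sets $\mathfrak f_j(g) := 0$ by definition, the quantity to control reduces to $|\langle h_j, g\rangle|$ itself. The plan is to combine the two case-hypotheses into a single telescoping bound. Observe that $\Delta_n(g) + \Delta_{n+1}(g) = \mathfrak m_{n+1} - \mathfrak m_{n-1}$, so the triangle inequality yields $|\mathfrak m_{n+1} - \mathfrak m_{n-1}| \le 2Q(g,\beta)$. This replaces the one-sided comparison with the threshold used in Lemmas \ref{case1} and \ref{case2} and is the reason the $2Q(g,\beta)$ term appears in the final bound. It also explains why the proof runs over a $2\beta$ window rather than the $3\beta$ window of Lemma \ref{case1}, producing $\mathbf{e}(2\beta)$ and $2L\beta$ in place of $\mathbf{e}(3\beta)$ and $3L\beta$.

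Next I would expand $\mathfrak m_{n+1} - \mathfrak m_{n-1}$ via \eqref{noiseEq}. The intake-separation hypothesis $t_{j+1} - t_j \ge 4\beta + D$ together with $t_j \in [n\beta, (n+1)\beta)$ rules out any other intake in $[(n-1)\beta, n\beta)$ or $[(n+1)\beta, (n+2)\beta)$, so $f_{n-1} = f_{n+1} = 0$. Consequently the only surviving $h_j$-contribution appears in the tail-sum portion of $\mathfrak m_{n+1}$, and the expansion reads
\begin{equation*}
\begin{split}
\mathfrak m_{n+1} - \mathfrak m_{n-1}
&= \sum_{\tau_k < (n-1)\beta} \langle f_k, g\rangle \frac{e^{-\rho_k\beta}-1}{-\rho_k\beta} e^{-\rho_k((n-1)\beta - \tau_k)}\bigl(e^{-2\rho_k\beta}-1\bigr) \\
&\quad + \langle h_j, g\rangle e^{-\rho_j((n+1)\beta - t_j)} \frac{e^{-\rho_j\beta}-1}{-\rho_j\beta} \\
&\quad + \int_0^\beta \Bigl\langle \eta((n+1)\beta + t) - \eta((n-1)\beta + t), \tfrac{g}{\beta} \Bigr\rangle\, dt + \nu_{n+1} - \nu_{n-1}.
\end{split}
\end{equation*}

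I would then split $\langle h_j, g\rangle = \bigl(\langle h_j,g\rangle - \langle h_j,g\rangle e^{-\rho_j((n+1)\beta - t_j)} \tfrac{e^{-\rho_j\beta}-1}{-\rho_j\beta}\bigr) + \langle h_j,g\rangle e^{-\rho_j((n+1)\beta - t_j)} \tfrac{e^{-\rho_j\beta}-1}{-\rho_j\beta}$, so that the first piece is exactly $v_1(h_j, g, \beta)$ in absolute value (by Lemma \ref{lem1}), and the second piece can be isolated from the expansion above. Applying the triangle inequality and bounding each remaining term precisely as in the chain \eqref{delta} — the tail sum by $\alpha \mathbf{e}(2\beta)$ (using $|e^{-2\rho_k\beta}-1| \le \mathbf{e}(2\beta)$ and the geometric estimate from the separation hypothesis), the background integral by $2L\beta\|g\|$ via the Lipschitz bound \eqref{LipBackgrnd} applied on a $2\beta$ window, and the noise by $|\nu_{n+1}-\nu_{n-1}| \le 2\sigma$ — yields
\begin{equation*}
|\langle h_j, g\rangle| \le v_1(h_j, g, \beta) + \alpha \mathbf{e}(2\beta) + 2L\beta\|g\| + 2\sigma + |\mathfrak m_{n+1} - \mathfrak m_{n-1}|.
\end{equation*}
Combining with $|\mathfrak m_{n+1} - \mathfrak m_{n-1}| \le 2Q(g,\beta)$ and $\mathfrak f_j(g) = 0$ delivers the stated inequality.

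I do not foresee a genuine obstacle; the argument is a direct variant of Lemma \ref{case1}. The only minor subtleties are verifying that $f_{n\pm 1} = 0$ from the intake-separation assumption so that no additional catalyst term pollutes the expansion, and tracking the replacement of $(e^{-\rho_k\beta}-1)$ and $(e^{-3\rho_k\beta}-1)$ by $(e^{-2\rho_k\beta}-1)$ in the tail, which is what converts $\mathbf{e}(\beta)$ and $\mathbf{e}(3\beta)$ into $\mathbf{e}(2\beta)$ in the geometric estimate.
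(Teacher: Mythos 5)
Your proposal is correct and follows essentially the same route as the paper's proof: decompose $|\langle h_j,g\rangle|$ via the expansion of $\mathfrak m_{n+1}-\mathfrak m_{n-1}$ to produce the $v_1$, $\alpha\mathbf{e}(2\beta)$, $2L\beta\|g\|$, $2\sigma$ terms, and bound $|\mathfrak m_{n+1}-\mathfrak m_{n-1}|$ by $2Q(g,\beta)$ using the two case hypotheses via a telescoping triangle inequality. The paper phrases the split as $|\langle h_j,g\rangle|\le|\langle h_j,g\rangle-(\mathfrak m_{n+1}-\mathfrak m_{n-1})|+|\mathfrak m_{n+1}-\mathfrak m_{n-1}|$, which is an equivalent reordering of what you write.
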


\begin{proof}
In this case, neither $|\Delta_n(g)|$ nor $|\Delta_{n+1}(g)|$ is capable of detecting $h_j$; thus, we can conclude that $\langle h_j,g\rangle$ is small and is of the order of the threshold. We set $\mathfrak f_j(g)$ to 0 in this case, the following expression gives the error:

\begin{equation}\label{appro3}
\begin{split} 
|\langle h_j,g\rangle|&\le|\langle h_j,g\rangle-(\mathfrak m_{n+1}-\mathfrak m_{n-1})|+|\mathfrak m_{n+1}-\mathfrak m_{n-1}|\\
&\le\left|\sum\limits_{\tau_k<(n-1)\beta} \langle f_k,g\rangle \frac{e^{-\rho_k\beta}-1}{-\rho_k\beta}e^{-\rho_k((n-1)\beta-\tau_k)}(e^{-2\rho_k\beta}-1)\right|\\
&\quad+\left|\langle h_j,g\rangle e^{-\rho_j((n+1)\beta-t_j)}\frac{e^{-\rho_j\beta}-1}{-\rho_j\beta}-\langle h_j,g\rangle\right|\\
&\quad+\left|\int_0^\beta\langle\eta((n+1)\beta+t)-\eta((n-1)\beta+t),\frac{g}{\beta}\rangle dt\right|+\left|\nu_{n+1}-\nu_{n-1}\right|\\
&\quad+|\mathfrak m_{n+1}-\mathfrak m_{n}|+|\mathfrak m_{n}-\mathfrak m_{n-1}|\\
&\le v_1(h_j,g,\beta)+\alpha\mathbf{e}(2\beta)+2L\beta\|g\|+2\sigma+2Q(g,\beta),
\end{split}
\end{equation}
where we made a similar estimate as in \eqref{appro1} and used the fact that $|\Delta_{n}| = |\mathfrak m_{n} - \mathfrak m_{n-1}| \le Q(g,\beta)$ and $|\Delta_{n+1}(g)| = |\mathfrak m_{n+1} - \mathfrak m_{n}| \le Q(g,\beta)$.
\end{proof}

Combining Lemmas \ref{lem1}, \ref{case1}, \ref{case2}, and \ref{case3} with Remark \ref{easeyrem}, we obtain \eqref{Esthj}.


\subsubsection{Recovery of Decay Rate.}\label{sec2}
Let us assume that an intake time $t_j$ has been determined by $\Delta_n(g)$ (i.e., $|\Delta_n(g)| > Q(g, \beta)$). According to Algorithm 1, $\langle h_j,g\rangle\approx\mathfrak f_j(g)=\mathfrak m_{n+1}-\mathfrak m_{n-2}$. In order to estimate the decay rate $\rho_j$, approximate values of $u_t((n-2)\beta),$ $u_t((n-1)\beta),$ $u_t((n+1)\beta)$, and $u_t((n+2)\beta)$ are used by Algorithm 1. As we can only obtain noisy measurements of $u(t),$ we use them to approximate $u_t(t)$; we expect the error of this approximation to be reasonable because $u\in C^1([0,\mathcal T],\HH)$. Consider $u_t(n\beta)$ for example. We will fix $g$ and $\beta$, and then we divide the time interval $[n\beta, (n+1)\beta]$ into $N$ subintervals of length $\widetilde{\beta}=\frac{\beta}{N}$ ($N\in\Z^+$) and consider $\frac{u(n\beta+\widetilde{\beta})-u(n\beta)}{\widetilde{\beta}}$ as an approximation of $u_t(n\beta).$ Observe that 
\[
\begin{split}
    \langle u_t(n\beta), \frac{g}{\beta}\rangle&=\langle\frac{u(n\beta+\widetilde{\beta})-u(n\beta)}{\widetilde{\beta}},  \frac{g}{\beta}\rangle-\epsilon_{n}(\widetilde{\beta})\\
    &=\langle u(n\beta+\widetilde{\beta})-u(n\beta), \frac{g}{\widetilde{\beta}\beta}\rangle-\epsilon_{n}(\widetilde{\beta})
\end{split}
\]
where $|\epsilon_{n}(\widetilde{\beta})|\to 0$ as $\widetilde{\beta}\to 0$ (i.e. $N\to\infty$). Therefore at $t=n\beta,$ \eqref{model} implies
\begin{equation} \label{DerApp}
\begin{split} 
&\quad\langle u(n\beta+\widetilde{\beta})-u(n\beta), \frac{g}{\widetilde{\beta}\beta}\rangle-\langle u(n\beta), A^*\frac{g}{\beta}\rangle\\
&=\sum\limits_{\tau_k\le n\beta}\langle f_k,g\rangle \frac{e^{-\rho_k(n\beta-\tau_k)}}{\beta}+\langle\eta(n\beta),\frac{g}{\beta}\rangle+\epsilon_{n}(\widetilde{\beta}).
\end{split}
\end{equation}


\medskip


Under the assumption that $|\Delta_n(g)| > Q(g, \beta)$, using the expression \eqref{DerApp}, we evaluate the quantities $\widetilde{\Delta}_{n+1}(g)$, $\widetilde{\Delta}_{n-2}(g)$ defined in Algorithm 1 and their difference from the measurements \eqref{measurement2}.

\begin{equation*}
\begin{split} 
\widetilde{\Delta}_{n+1}(g)&=\mathfrak s_{n+2}-\mathfrak s_{n+1}\\
&=\sum\limits_{\tau_k<(n+1)\beta}\langle f_k,g\rangle\frac{e^{-\rho_k((n+2)\beta-\tau_k)}-e^{-\rho_k((n+1)\beta-\tau_k)}}{\beta}+\langle\eta((n+2)\beta)-\eta((n+1)\beta),\frac{g}{\beta}\rangle\\
&\quad+\epsilon_{n+2}(\widetilde{\beta})-\epsilon_{n+1}(\widetilde{\beta})+\widetilde{\nu}_{n+2}-\widetilde{\nu}_{n+1}\\
&=\sum\limits_{\tau_k<(n+1)\beta}\langle f_k,g\rangle e^{-\rho_k((n+1)\beta-\tau_k)}\frac{e^{-\rho_k\beta}-1}{\beta}+\langle\eta((n+2)\beta)-\eta((n+1)\beta),\frac{g}{\beta}\rangle\\
&\quad+\epsilon_{n+2}(\widetilde{\beta})-\epsilon_{n+1}(\widetilde{\beta})+\widetilde{\nu}_{n+2}-\widetilde{\nu}_{n+1},
\end{split}
\end{equation*}

\begin{equation*}
    \begin{split} 
         \widetilde{\Delta}_{n-2}(g)&=\mathfrak s_{n-1}-\mathfrak s_{n-2}\\
         &=\sum\limits_{\tau_k<(n-2)\beta}\langle f_k,g\rangle e^{-\rho_k((n-2)\beta-\tau_k)}\frac{e^{-\rho_k\beta}-1}{\beta}+\langle\eta((n-1)\beta)-\eta((n-2)\beta),\frac{g}{\beta}\rangle\\
         &\quad+\epsilon_{n-1}(\widetilde{\beta})-\epsilon_{n-2}(\widetilde{\beta})+\widetilde{\nu}_{n-1}-\widetilde{\nu}_{n-2},
\end{split}
\end{equation*}
and
\begin{equation*}
    \begin{split} 
         \widetilde{\Delta}_{n+1}(g)-\widetilde{\Delta}_{n-2}(g)
         &=\sum\limits_{\tau_k<(n-2)\beta}\langle f_k,g\rangle \frac{e^{-\rho_k\beta}-1}{\beta}e^{-\rho_k((n-2)\beta-\tau_k)}(e^{-3\rho_k\beta}-1)\\
         &\quad+\langle h_j,g\rangle e^{-\rho_j((n+1)\beta-t_j)}\frac{e^{-\rho_j\beta}-1}{\beta}+w_1(\eta, \epsilon, \widetilde{\nu})
    \end{split}
\end{equation*}
where $w_1(\eta,\epsilon,\widetilde{\nu})=\langle\eta((n+2)\beta)-\eta((n+1)\beta),\frac{g}{\beta}\rangle-\langle\eta((n-1)\beta)+\eta((n-2)\beta),\frac{g}{\beta}\rangle+\epsilon_{n+2}(\widetilde{\beta})-\epsilon_{n+1}(\widetilde{\beta})-\epsilon_{n-1}(\widetilde{\beta})+\epsilon_{n-2}(\widetilde{\beta})+\widetilde{\nu}_{n+2}-\widetilde{\nu}_{n+1}-\widetilde{\nu}_{n-1}+\widetilde{\nu}_{n-2}.$

Recall that
\begin{equation} \label {Eqfj}
\begin{split} 
\mathfrak f_j(g)
&=\mathfrak m_{n+1}-\mathfrak m_{n-2}\\
&=\sum\limits_{\tau_k<(n-2)\beta} \langle f_k,g\rangle \frac{e^{-\rho_k\beta}-1}{-\rho_k\beta}e^{-\rho_k((n-2)\beta-\tau_k)}(e^{-3\rho_k\beta}-1)\\
&\quad+\langle h_j,g\rangle e^{-\rho_j((n+1)\beta-t_j)}\frac{e^{-\rho_j\beta}-1}{-\rho_j\beta}+w_2(\eta,\nu)
\end{split}
\end{equation}
where $w_2(\eta,\nu)=\int_0^\beta\langle\eta((n+1)\beta+t)-\eta((n-2)\beta+t),\frac{g}{\beta}\rangle dt+\nu_{n+1}-\nu_{n-2}.$

We use $\left|\frac{\widetilde{\Delta}_{n+1}(g)-\widetilde{\Delta}_{n-2}(g)}{\mathfrak{f}_j(g)}\right|$ as an approximation for $\rho_j$ and derive the following two estimates before calculating the relative error in our estimation of $\rho_j$:

\begin{equation}\label{es1}
    \begin{split} 
         &\quad\left|\sum\limits_{\tau_k<(n-2)\beta}\langle f_k,g\rangle\frac{e^{-\rho_k\beta}-1}{\beta} e^{-\rho_k((n-2)\beta-\tau_k)}(e^{-3\rho_k\beta}-1)\left(1-\frac{\rho_j}{\rho_k}\right)\right|\\
         &\le\sum\limits_{\tau_k<(n-2)\beta}\|f_k\|\|g\|\rho_ke^{-\rho_k((n-2)\beta-\tau_k)}\mathbf{e}(3\beta)\left|1-\frac{\rho_j}{\rho_k}\right|\\
         &\le\sum\limits_{\tau_k<(n-2)\beta}\|f_k\|\|g\|e^{-\rho_k((n-2)\beta-\tau_k)}\mathbf{e}(3\beta)|\rho_k-\rho_j|\\
         &\le\sum\limits_{\tau_k<(n-2)\beta}e^{-\check{\rho} kD}\|f_k\|\|g\|\mathbf{e}(3\beta)(\hat{\rho}-\check{\rho})\\
         &\le\frac{1}{e^{\check{\rho} D}-1}HR(\hat{\rho}-\check{\rho})\mathbf{e}(3\beta)\\
         &=\alpha(\hat{\rho}-\check{\rho})\mathbf{e}(3\beta),
    \end{split}
\end{equation}
where $\check{\rho}, \hat{\rho}$ are the lower and upper bound of all $\rho_j$ respectively, $\mathbf{e}(3\beta)=1-e^{3\hat{\rho}\beta},$ and $\alpha=\frac{1}{e^{\check{\rho} D}-1}HR$; 
 
\begin{equation}\label{es2}
\begin{split} 
&\quad\left|w_1(\eta,\epsilon, \widetilde{\nu})+\rho_jw_2(\eta,\nu)\right|\\
&\le\left|\langle\eta((n+2)\beta)-\eta((n+1)\beta),\frac{g}{\beta}\rangle\right|+\left|\langle\eta((n-1)\beta)-\eta((n-2)\beta),\frac{g}{\beta}\rangle\right|\\
&\quad+|\epsilon_{n+2}(\widetilde{\beta})-\epsilon_{n+1}(\widetilde{\beta})-\epsilon_{n-1}(\widetilde{\beta})+\epsilon_{n-2}(\widetilde{\beta})|+|\widetilde{\nu}_{n+2}-\widetilde{\nu}_{n+1}-\widetilde{\nu}_{n-1}+\widetilde{\nu}_{n-2}|\\
&\quad+\left|\rho_j\int_0^\beta\langle\eta((n+1)\beta+t)-\eta((n-2)\beta+t),\frac{g}{\beta}\rangle dt\right|+\rho_j|\nu_{n+1}-\nu_{n-2}| \\
&\le 2L\|g\|+\epsilon(\widetilde{\beta})+4\sigma+3\rho_jL\beta\|g\|+2\rho_j\sigma,
\end{split}
\end{equation}
where $L$ is the Lipschitz constant of the background source, $\sigma$ is the noise level of the measurements, and $\epsilon(\widetilde{\beta})=|\epsilon_{n+2}(\widetilde{\beta})-\epsilon_{n+1}(\widetilde{\beta})-\epsilon_{n-2}(\widetilde{\beta})+\epsilon_{n-3}(\widetilde{\beta})|.$

Finally, we let 
$\frac{\widetilde\Delta_{n+1}(g)-\widetilde\Delta_{n-2}(g)}{\mathfrak{f}_j(g)}\approx -\rho_j$ and present the estimation of the relative error of $\rho_j$ using \eqref{Eqfj}.
\begin{equation}\label{relativeerror}
\begin{split} 
&\quad\frac{\left|\rho_j+\frac{\widetilde\Delta_{n+1}(g)-\widetilde\Delta_{n-2}}{\mathfrak f_j(g)}\right|}{\rho_j}\\ 
&=
\frac{|\widetilde\Delta_{n+1}(g)-\widetilde\Delta_{n-2}(g)+\rho_j\mathfrak f_j(g)|}{\rho_j|\mathfrak f_j(g)|}\\
&=
\frac{\left|\sum\limits_{\tau_k<(n-2)\beta}\langle f_k,g\rangle\frac{e^{-\rho_k\beta}-1}{\beta} e^{-\rho_k((n-2)\beta-\tau_k)}(e^{-3\rho_k\beta}-1)(1-\frac{\rho_j}{\rho_k})+w_1(\eta,\epsilon,\widetilde{\nu})+\rho_j w_2(\eta,\nu)\right|}{\rho_j|\mathfrak f_j(g)|}\\
&\le
\frac{\left|\sum\limits_{\tau_k<(n-2)\beta}\langle f_k,g\rangle\frac{e^{-\rho_k\beta}-1}{\beta} e^{-\rho_k((n-2)\beta-\tau_k)}(e^{-3\rho_k\beta}-1)(1-\frac{\rho_j}{\rho_k})\right|+\left|w_1(\eta,\epsilon,\widetilde{\nu})+\rho_j w_2(\eta,\nu)\right|}{\rho_j|\mathfrak f_j(g)|}\\
&\le
\frac{1}{{|\mathfrak f_j(g)|}}\left(\alpha\frac{\hat{\rho}-\check{\rho}}{\rho_j}\mathbf{e}(3\beta)+2\frac{L}{\rho_j}\|g\|+\frac{\epsilon(\widetilde{\beta})}{\rho_j}+3L\beta\|g\|+4\frac{\sigma}{\rho_j}+2\sigma\right)\\
&\le 
\frac{1}{{|\mathfrak f_j(g)|}}\left(\alpha\frac{\hat{\rho}-\check{\rho}}{\check{\rho}}\mathbf{e}(3\beta)+L\|g\|(\frac{2}{\check{\rho}}+3\beta)+\frac{\epsilon(\widetilde{\beta})}{\check{\rho}}+4\frac{\sigma}{\check{\rho}}+2\sigma\right).
\end{split}
\end{equation}
The validity of the last inequality is ensured by \eqref{es1} and \eqref{es2}.

\subsection{Approximation Bounds for the Source Recovery by Algorithm 2}

As in the case of Algorithm 1,
we begin with a theorem that collects theoretical error guarantees. 

\begin{theorem}\label{case1_theorem}
    Let $u \in \HH$ be a mild solution of (\ref{model}) where $f(t)$ is given as in (\ref{Reducedf}). Suppose that $\sup_{j} \|h_j\| = H$, $\sup_{g\in \Tilde{G}} \|g\| = R$, $\inf_{j} \rho_j = \check{\rho}$, $\sup_{j} \rho_j = \hat{\rho}$, and $\beta < \frac{2\pi k}{\hat{\rho}}$, where $k\in\Z\setminus\{0\}$. Let $ \Tilde{\rho}_j,\esshape$ and $\estime_j = \ell\beta$ be the output of Algorithm 2. Then $|\estime_j - t_j| \leq \beta$, and
    \begin{align}
        |\esshape - \ang{h_j,g}| &\leq  H\|g\|\bigg(e^{\beta(3|\tilde{\rho}_j-\rho_j| + \hat{\rho})} - 1 + \frac{|\tilde{\rho}_j-\rho_j|\beta e^{\beta(3|\tilde{\rho}_j-\rho_j| + \hat{\rho})}}{\sqrt{\check{\rho}^2\beta^2 + 4\pi^2k^2}}\bigg) \label{coeff_error}\\
    &\quad+ 2\sqrt{2}\beta e^{3\hat{\rho}\beta}\left( \|g\|(\frac{12}{\pi}L_\ell + H\hat{\rho}) + 6\sigma\right) \nonumber\\
    &\leq \beta\bigg(H\|g\|\big((3(\hat{\rho}-\check{\rho})+\hat{\rho})e^{\beta(3(\hat{\rho}-\check{\rho})+\hat{\rho})} + \frac{(\hat{\rho} - \check{\rho})e^{\beta(3(\hat{\rho}-\check{\rho})) + \hat{\rho})}}{2\pi k}\big) \nonumber\\
    &\quad+ 2\sqrt{2} \beta e^{3\hat{\rho}\beta}\big( \|g\|(\frac{12}{\pi}L_\ell + H\hat{\rho}) + 6\sigma\big) \bigg)\nonumber.
    \end{align}
    Furthermore, let 
    \begin{align*}
        M(g) = \beta|\Delta_{s,\ell+1}(g,\beta) - \Delta_{s,\ell-2}(g,\beta)|
    \end{align*}
    where the $\Delta_{s,\ell}(g,\beta)$ are as in $(38)$. If $M(g) > \frac{12}{\pi}L_\ell \|g\| + 4\sigma$, then 
    \begin{align}
    |\tilde{\rho}_j - \rho_j| &\leq \frac{\frac{4}{\pi}L_\ell\|g\|(1 + 3\hat{\rho}\beta) + 4\sigma(1 + \hat{\rho}\beta)}{ M(g)} + \hat{\rho} - \frac{1 - e^{-\hat{\rho}\beta}}{\beta} \label{rate_error}\\
    &\leq \frac{4L_\ell\|g\|}{\pi M(g)} + 4\sigma\left(\frac{1+\hat{\rho}\beta}{ M(g)}\right) + \beta\left(\frac{\hat{\rho}^2}{2} + \frac{12L_\ell\|g\|\hat{\rho}}{\pi M(g)}\right)\nonumber
    \end{align}
\end{theorem}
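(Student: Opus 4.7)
\textbf{Plan for proof of Theorem~\ref{case1_theorem}.} The plan is to rewrite $\Delta_{s,\ell}(g,\beta)$ without the boundary values of $u$, decompose the remaining source integral into a catalyst part (computed in closed form) and a background part (controlled via the Lipschitz constant $L_\ell$ together with the Fourier cancellation afforded by $s=2\pi ik/\beta$), and then substitute these expansions into the formulas for $R$ and $\esshape$ used by Algorithm~2.

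To begin, using $\langle u,(\bar sI-A^*)g\rangle=s\langle u,g\rangle-\langle Au,g\rangle$ and $\dot u=Au+F$, the integrand in~\eqref{eq_abstract_m} becomes $-\tfrac{d}{dt}(e^{-st}\langle u,g\rangle)+e^{-st}\langle F,g\rangle$. Integrating over $[\ell\beta,(\ell+1)\beta]$ and using $e^{-s\beta}=1$, the boundary terms in $m_{s,\ell}$ and $m_{0,\ell}$ coincide and cancel in the difference, leaving
\[
\Delta_{s,\ell}(g,\beta)=\frac{1}{\beta^2}\int_{\ell\beta}^{(\ell+1)\beta}(e^{-st}-1)\langle F(t),g\rangle\,dt+\nu_{s,\ell}-\nu_{0,\ell}.
\]
Splitting $F$ as in~\eqref{Reducedf} and evaluating the catalyst integral yields, for $t_j\le\ell\beta$, the explicit contribution
\[
X_\ell:=-\frac{s(1-e^{-\rho_j\beta})}{\beta^2\rho_j(s+\rho_j)}\langle h_j,g\rangle\,e^{-\rho_j(\ell\beta-t_j)},
\]
and $0$ when $t_j\ge(\ell+1)\beta$. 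Once we show that $t_j\in[(\ell-1)\beta,(\ell+1)\beta)$, the catalyst parts of $\Delta_{s,\ell+1}-\Delta_{s,\ell-2}$ and $\Delta_{s,\ell+2}-\Delta_{s,\ell+1}$ equal $X_{\ell+1}$ and $X_{\ell+1}(e^{-\rho_j\beta}-1)$, respectively.

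The core technical step is the background bound. Substituting $u=t-\ell_i\beta$ in each window we obtain $e^{-st}=e^{-su}$, and the Lipschitz bound $|\tilde\eta(\ell_1\beta+u)-\tilde\eta(\ell_2\beta+u)|\le L_\ell|\ell_1-\ell_2|\beta$ then shows that the $\tilde\eta$ part of $\Delta_{s,\ell_1}-\Delta_{s,\ell_2}$ is bounded by
\[
\frac{L_\ell\|g\|\,|\ell_1-\ell_2|}{\beta}\int_0^\beta|e^{-su}-1|\,du.
\]
The identity $|e^{-su}-1|=2|\sin(\pi ku/\beta)|$ together with the change of variables $v=\pi ku/\beta$ give $\int_0^\beta|e^{-su}-1|\,du=\frac{4\beta}{\pi}$ for every integer $k\ge1$, so the background contribution is bounded by $\frac{4|\ell_1-\ell_2|}{\pi}L_\ell\|g\|$. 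Combined with the trivial noise bound $4\sigma$, this produces exactly the thresholds $Q_n(g)$ used by the algorithm. The estimate $|\tilde t_j-t_j|\le\beta$ then follows by the standard ``no false detection'' dichotomy: for $t_j\ge(\ell+1)\beta$ the difference $\Delta_{s,\ell}-\Delta_{s,\ell-1}$ contains only background, noise, and the residual of any fully-active previous catalyst whose contribution satisfies $|X_\ell-X_{\ell-1}|\le\tfrac{|s|(1-e^{-\rho_j\beta})^2}{\beta^2\rho_j|s+\rho_j|}H\|g\|\le\hat\rho H\|g\|$; this matches the extra quantity subtracted on line~6, so the trigger cannot fire, and detection at $\ell\beta$ forces $t_j\in[(\ell-1)\beta,(\ell+1)\beta)$.

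For the rate bound, under $M(g)>\tfrac{12}{\pi}L_\ell\|g\|+4\sigma$ the catalyst piece of $\Delta_{s,\ell+1}-\Delta_{s,\ell-2}$ dominates. Writing $R=\tfrac{X_{\ell+1}(e^{-\rho_j\beta}-1)+E_1}{\beta X_{\ell+1}+\beta E_2}$ with $|E_1|,|E_2|\le\tfrac{4}{\pi}L_\ell\|g\|+4\sigma$ (and the appropriate sign interpretation making $R$ approximate $\rho_j$), a perturbed-ratio estimate bounds $\bigl|R-\tfrac{1-e^{-\rho_j\beta}}{\beta}\bigr|$, and combining with $\bigl|\rho_j-\tfrac{1-e^{-\rho_j\beta}}{\beta}\bigr|\le\hat\rho-\tfrac{1-e^{-\hat\rho\beta}}{\beta}\le\tfrac{\hat\rho^2\beta}{2}$ (monotonicity of $\rho\mapsto\rho-\tfrac{1-e^{-\rho\beta}}{\beta}$ and a Taylor remainder) gives~\eqref{rate_error}. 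For~\eqref{coeff_error}, the key identity
\[
\frac{\tilde\rho_j(\tilde\rho_j+s)\beta^2}{s(e^{-2\tilde\rho_j\beta}-e^{-\tilde\rho_j\beta})}\cdot\frac{X_{\ell+1}}{\langle h_j,g\rangle}=e^{-\rho_j(\ell\beta-t_j)}\cdot\frac{\tilde\rho_j(\tilde\rho_j+s)(1-e^{-\rho_j\beta})e^{\tilde\rho_j\beta}}{\rho_j(\rho_j+s)(1-e^{-\tilde\rho_j\beta})e^{\rho_j\beta}}
\]
reveals three error sources: (i)~$|e^{-\rho_j(\ell\beta-t_j)}-1|\le e^{\hat\rho\beta}-1$ from $|\ell\beta-t_j|\le\beta$; (ii)~the ratio perturbation, which splits via $\tfrac{\tilde\rho_j+s}{\rho_j+s}-1=\tfrac{\tilde\rho_j-\rho_j}{s+\rho_j}$ (yielding the $\tfrac{|\tilde\rho_j-\rho_j|\beta}{\sqrt{\check\rho^2\beta^2+4\pi^2k^2}}$ term via $|s+\rho_j|=\sqrt{\rho_j^2+4\pi^2k^2/\beta^2}$) combined with a factor bounded by $e^{\beta(3|\tilde\rho_j-\rho_j|+\hat\rho)}$ using $|e^x-1|\le|x|e^{|x|}$ on the remaining scalar ratios; and (iii)~the noise/background error $\tfrac{12}{\pi}L_\ell\|g\|+4\sigma$ amplified by the prefactor, whose magnitude is bounded by $\sqrt{2}\beta e^{2\hat\rho\beta}$ via $1-e^{-x}\ge xe^{-x}$ and the hypothesis $\beta<\tfrac{2\pi k}{\hat\rho}$ (which makes $\sqrt{1+\tfrac{\rho_j^2\beta^2}{4\pi^2k^2}}\le\sqrt2$). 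The main obstacle I anticipate is matching the precise constants, especially the coefficient $3$ in the exponent $e^{\beta(3|\tilde\rho_j-\rho_j|+\hat\rho)}$, which requires carefully tracking where three ``units of $\beta|\tilde\rho_j-\rho_j|$'' accumulate in the combined prefactor expansion.
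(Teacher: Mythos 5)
Your overall plan recovers the same structure as the paper's proof, and the final propositions (detection, rate recovery via a perturbed ratio $\frac{A+\delta_1}{B+\delta_2}$, coefficient recovery by comparing the true and reconstructed prefactors) match what the paper does. However, there are two points worth raising.

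First, your derivation of
\[
\Delta_{s,\ell}(g,\beta)=\frac{1}{\beta^2}\int_{\ell\beta}^{(\ell+1)\beta}(e^{-st}-1)\langle F(t),g\rangle\,dt+\nu_{s,\ell}-\nu_{0,\ell}
\]
is genuinely cleaner than the paper's Lemma~\ref{derivations}, which reaches the same conclusion by expanding $u$ via the semigroup representation, taking Laplace transforms, invoking the resolvent identity, and then analytically continuing from real $s>\omega$ to $s=2\pi i k/\beta$. Your route exploits the product rule $e^{-st}\langle u,(\bar sI-A^*)g\rangle = -\frac{d}{dt}\bigl(e^{-st}\langle u,g\rangle\bigr)+e^{-st}\langle F,g\rangle$ together with periodicity $e^{-s\beta}=1$, so all $u$-boundary terms cancel between $m_{s,\ell}$ and $m_{0,\ell}$. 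The one thing you must be careful about is that the theorem posits a \emph{mild} solution, for which $\dot u=Au+F$ does not hold pointwise; you need the weak form, namely that for $g\in D(A^*)$ the scalar function $t\mapsto\langle u(t),g\rangle$ is absolutely continuous with $\frac{d}{dt}\langle u(t),g\rangle=\langle u(t),A^*g\rangle+\langle F(t),g\rangle$ a.e.\ (the Ball characterization of mild solutions). With that remark added, your derivation is valid and arguably preferable to the paper's, which must re-derive the same structure twice (once for $u_0$, once for the variation-of-constants integral) before observing that the $s$-independent pieces cancel.

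Second, the piece you flag as an ``obstacle'' is a real gap, and your proposed tool is not the right one. To match the $e^{\beta(3|\tilde\rho_j-\rho_j|+\hat\rho)}-1$ term, the paper proves (Lemma~\ref{lemmaIHate}) that for $f(x)=\frac{e^{\beta x}-1}{x}$ the quantity $\bigl|1-\frac{f(x)}{f(x+a)}\bigr|$ is nondecreasing in $x\ge 0$ and hence bounded by $|1-e^{-a\beta}|\le e^{|a|\beta}-1$. The inequality $|e^x-1|\le|x|e^{|x|}$ does not directly control the ratio $\frac{(e^{\rho_j\beta}-1)\tilde\rho_j}{(e^{\tilde\rho_j\beta}-1)\rho_j}$, and applying it term-by-term will inflate the constants. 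A cleaner substitute that fits your framework: show $\frac{d}{dx}\log f(x)=\frac{\beta e^{\beta x}}{e^{\beta x}-1}-\frac1x\in(0,\beta]$ (equivalent to $\beta x\le e^{\beta x}-1$), so $\bigl|\log f(\rho_j)-\log f(\tilde\rho_j)\bigr|\le\beta|\epsilon_\rho|$ and thus $\bigl|\frac{f(\rho_j)}{f(\tilde\rho_j)}-1\bigr|\le e^{\beta|\epsilon_\rho|}-1$. This recovers exactly what Lemma~\ref{lemmaIHate} provides and, together with the factoring of $e^{2\beta\epsilon_\rho+\rho_j(t_j-\ell\beta)}$ (rather than only $e^{\rho_j(t_j-\ell\beta)}$, as you propose — the $2\beta\epsilon_\rho$ in the exponent is where two of the three ``units'' come from, the third coming from the $f$-ratio), yields the stated coefficient~$3$. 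All remaining steps in your plan line up with the paper's Propositions~\ref{no_recovery}--\ref{full_recovery}.
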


\begin{remark}
    \emph{
    \begin{enumerate}
        \item The error in $(\ref{coeff_error})$ tends to 0 as $\beta \to 0$. Moreover, if $\tilde{\rho}_j = \rho_j$, then (\ref{coeff_error}) becomes
        \[
        |\esshape - \ang{h_j,g}| \leq H\|g\|(e^{\beta}-1) + \sqrt{2}\beta e^{3\hat{\rho}\beta}\left(\frac{24}{\pi}L_\ell\|g\| + 12\sigma\right).
        \]
        \item The estimates in (\ref{coeff_error}) can, in fact, be improved. In Section \ref{proof_section}, we give more precise bounds for the error in the coefficient by examining three different cases. 
        \item In the ideal scenario $L_\ell = \sigma = 0$, the error in the recovery of $\rho_j$ is bounded above as 
        \[
        |\tilde{\rho}_j - \rho_j| \leq \hat{\rho} - \frac{1 - e^{-\hat{\rho}\beta}}{\beta} \leq \frac{\hat{\rho}^2}{2}\beta,
        \]
        which clearly tends to 0 as $\beta \to 0$. 
        \item For each $g \in \mathcal{G}$, the value $M(g)$ stabilizes to a nonzero value as $\beta \to 0$. In fact, in the ideal scenario, 
        \[
        M(g) \to |\ang{h_j,g}|.
        \]
    \end{enumerate}
    }
\end{remark}

\subsection{Details of Algorithm 2: Prony-Laplace Method}
Recall that Algorithm 2 uses measurements given by 
\eqref{eq_abstract_m}.
These measurement values can be rewritten in a more practical form, as demonstrated by the following lemma, the proof of which can 
be found in the Appendix.

\begin{lemma}\label{derivations}
    Assume $A$ is the generator of a $C_0$-semigroup and $u_0\in\HH$. If $t_j \in [\ell\beta,(\ell+1)\beta]$ where $\ell \in \Z$ and $s = 2\pi i k/\beta$ for some $k \in \Z$, then 
    \begin{equation}\label{eq_m_in_int}
    \begin{split}
    m_{s,\ell}(g,\beta) &= \frac{e^{-st_j} - e^{\rho_j(t_j - (\ell + 1)\beta})}{(\rho_j + s)\beta^2}\ang{h_j,g} \\
    &+ \frac{1}{\beta^2}\int_{\ell\beta}^{(\ell+1)\beta} e^{-st}\ang{\tilde{\eta}(t),g} \; dt + \nu_{s,\ell}(g) + \ang{\varphi(\ell,\beta,t_j),\frac{g}{\beta^2}}. 
\end{split}
\end{equation} 
Similarly, if $t_j < \ell\beta$, then 
\begin{equation} 
\begin{split}
    m_{s,\ell}(g,\beta) &= \frac{e^{\rho_j(t_j - \ell\beta)} - e^{\rho_j(t_j - (\ell + 1)\beta})}{(\rho + s)\beta^2}\ang{h_j,g} \\
    &+ \frac{1}{\beta^2}\int_{\ell\beta}^{(\ell+1)\beta} e^{-st}\ang{\tilde{\eta}(t),g} \; dt + \nu_{s,\ell}(g) + \ang{\tilde{\varphi}(\ell,\beta,t_j),\frac{g}{\beta^2}}. 
\end{split}
\end{equation}
\end{lemma}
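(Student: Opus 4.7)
The plan is to reduce $m_{s,\ell}(g,\beta)$ to the claimed explicit form by an integration-by-parts against the weight $e^{-st}$, exploiting that our choice $s = 2\pi i k/\beta$ forces $e^{-s\ell\beta} = e^{-2\pi i k\ell} = 1$ at every grid point. The only nontrivial input is the standard semigroup-theoretic fact (see \cite{EN00}) that for a mild solution $u$ of \eqref{model} and any $g \in D(A^*)$, the scalar function $t\mapsto \ang{u(t),g}$ is absolutely continuous on bounded intervals with
\[
\frac{d}{dt}\ang{u(t),g} = \ang{u(t),A^*g} + \ang{F(t),g}\quad \text{a.e.},
\]
where $F(t) = h_je^{-\rho_j(t - t_j)}\chi_{[t_j,\infty)}(t) + \tilde{\eta}(t)$ is the decomposition in \eqref{Reducedf}.

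Since $s$ is purely imaginary, $\bar s = -s$, and conjugate-linearity of the inner product in its second slot gives $\ang{u(t),\bar s g} = s\ang{u(t),g}$. Hence
\[
\ang{u(t),(\bar s I - A^*)g} = s\ang{u(t),g} - \ang{u(t),A^*g} = s\ang{u(t),g} - \tfrac{d}{dt}\ang{u(t),g} + \ang{F(t),g}.
\]
A single integration by parts on $[\ell\beta,(\ell+1)\beta]$, together with $e^{-s\ell\beta} = e^{-s(\ell+1)\beta} = 1$, collapses the derivative and exponential terms into pure boundary data:
\[
\int_{\ell\beta}^{(\ell+1)\beta}\! e^{-st}\ang{u(t),(\bar s I - A^*)g}\, dt = \ang{u(\ell\beta) - u((\ell+1)\beta),g} + \int_{\ell\beta}^{(\ell+1)\beta}\! e^{-st}\ang{F(t),g}\, dt.
\]
Dividing by $\beta^2$, adding $\nu_{s,\ell}$, and setting $\varphi(\ell,\beta,t_j) := u(\ell\beta) - u((\ell+1)\beta)$ (with $\tilde\varphi$ playing the same role in the second case) already produces the shape of both claimed formulas, with $\tilde{\eta}$ contributing precisely the stated background integral.

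What remains is the $h_j$-piece of $\int_{\ell\beta}^{(\ell+1)\beta}e^{-st}\ang{F(t),g}\, dt$, which I would compute by cases. If $t_j\in[\ell\beta,(\ell+1)\beta]$, the characteristic function cuts the interval at $t_j$ and a direct antiderivative gives
\[
\ang{h_j,g}\, e^{\rho_j t_j}\int_{t_j}^{(\ell+1)\beta}\! e^{-(s+\rho_j)t}\, dt = \frac{e^{-st_j} - e^{\rho_j(t_j - (\ell+1)\beta)}}{s+\rho_j}\,\ang{h_j,g},
\]
where I again used $e^{-s(\ell+1)\beta} = 1$; this matches \eqref{eq_m_in_int}. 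If $t_j<\ell\beta$, then $\chi_{[t_j,\infty)}\equiv 1$ on the integration interval and the same elementary calculation, with both endpoints simplified by $e^{-s\ell\beta} = e^{-s(\ell+1)\beta} = 1$, produces $\frac{e^{\rho_j(t_j-\ell\beta)} - e^{\rho_j(t_j-(\ell+1)\beta)}}{s+\rho_j}\ang{h_j,g}$, which is the second formula.

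The one genuine obstacle is the first step: justifying the a.e.\ differentiation identity for $\ang{u(t),g}$ starting from the mild-solution formula $u(t) = T(t)u_0 + \int_0^t T(t-\tau)F(\tau)\, d\tau$. For a classical solution this is immediate; for a mild one it requires the adjoint argument that $t\mapsto T(t)^*g$ is strongly $C^1$ with derivative $A^*T(t)^*g$ whenever $g\in D(A^*)$, combined with a dominated-convergence step to differentiate under the convolution integral against $F$. Once this identity is in hand, every remaining step is bookkeeping.
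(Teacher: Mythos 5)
Your argument is correct and it is genuinely different from — and in several respects cleaner than — the paper's own proof. The paper expands the mild solution explicitly via \eqref{Solution}, treats the $T(t)u_0$, catalyst, and background pieces one at a time, evaluates their Laplace-type transforms via the resolvent $R(s,A)=(sI-A)^{-1}$ for real $s>\omega$ (with a separate lemma, imported from \cite{aldhuakor23}, handling the background convolution), and then invokes analyticity in $s$ to transport the real-$s$ identities to $s=2\pi ik/\beta$. Your approach bypasses all of that machinery: once one accepts the variational identity
\[
\frac{d}{dt}\ang{u(t),g}=\ang{u(t),A^*g}+\ang{F(t),g},\qquad g\in D(A^*),
\]
for a mild solution, a single integration by parts — together with the observation that $e^{-s\ell\beta}=e^{-s(\ell+1)\beta}=1$ at the chosen $s$ — immediately produces
\[
m_{s,\ell}(g,\beta)-\nu_{s,\ell}=\frac{1}{\beta^2}\ang{u(\ell\beta)-u((\ell+1)\beta),g}+\frac{1}{\beta^2}\int_{\ell\beta}^{(\ell+1)\beta}e^{-st}\ang{F(t),g}\,dt,
\]
and the elementary scalar integral of $e^{-(s+\rho_j)t}$ delivers both catalyst formulas at once. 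This has two advantages: it works directly at the complex $s$ of interest (no resolvent, no analytic continuation), and it makes the ``$\varphi$ is independent of $k$'' observation trivially transparent, since $\varphi=u(\ell\beta)-u((\ell+1)\beta)$ simply does not involve $s$ at all. The one input you rely on, and correctly flag, is that mild solutions are weak solutions in the sense above — that $t\mapsto\ang{u(t),g}$ is locally absolutely continuous with the stated a.e.\ derivative for $g\in D(A^*)$. This is a standard result (e.g.\ Ball's characterization of mild solutions via the adjoint semigroup, or \cite[Ch.~VI]{EN00}), so it is a citation rather than a gap, but it should indeed be cited explicitly since the paper's own route avoids it entirely. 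In exchange, the paper's longer computation produces a fully explicit closed form for $\varphi$ and $\tilde\varphi$ in terms of $T(\cdot)$ and convolutions, which your argument leaves as $u(\ell\beta)-u((\ell+1)\beta)$; for the purposes of the lemma and its downstream use (where only $k$-independence of $\varphi$ matters, since it cancels in $\Delta_{s,\ell}$), your form is perfectly adequate and arguably more natural.
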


A particularly important feature of these quantities is that the values $\varphi$ and $\Tilde{\varphi}$ do not depend on the value of $k$ defining $s = 2\pi i k/\beta$.

Algorithm 2 makes use of the values $\Delta_{s,\ell}(g,\beta) = m_{s,\ell}(g,\beta) - m_{0,\ell}(g,\beta)$, which, in lieu of Lemma \ref{derivations}, are given by
\begin{equation}\label{eq_delta_in_int}
\begin{split}
\Delta_{s,\ell}(g,\beta) &= m_{s,\ell}(g, \beta) - m_{0,\ell}(g, \beta) \\
                &= \frac{\rho_j (e^{-st_j}-1) + s(e^{\rho_j(t_j - (\ell+1)\beta)} - 1)}{\rho_j(\rho_j+s)\beta^2}\ang{h_j,g} + \varepsilon_{s,\ell}(g,\beta)
\end{split}
\end{equation}
when $\ell\beta \leq t_j \leq (\ell + 1)\beta$, and 
\begin{equation}\label{eq_delta_not_in_int}
\begin{split}
   \Delta_{s,\ell}(g,\beta) &= m_{s,\ell}(g/\beta^2) - m_{0,\ell}(g/\beta^2) \\
   &=  s\frac{e^{\rho_j(t_j - (\ell+1)\beta)} - e^{\rho_j(t_j - \ell\beta)}}{\rho_j(\rho_j+s)\beta^2}\ang{h_j,g} + \varepsilon_{s,\ell}(g,\beta)
\end{split}
\end{equation}
when $t_j < \ell\beta$. The error term $\varepsilon_{s,\ell}(g,\beta)$ is given by 
\begin{equation}\
\varepsilon_{s,\ell}(g,\beta) = \frac{1}{\beta^2}\int_{\ell\beta}^{(\ell+1)\beta}(e^{-st}-1)\ang{\eta(t),g}\;dt + \nu_{s,\ell} - \nu_{0,\ell}.
\end{equation}

Assume, for the moment, that $\varepsilon_{s,\ell}(g,\beta) \equiv 0$ for all $s,\ell,\beta,$ and $g$, and furthermore suppose that $t_j \in [\ell\beta,(\ell+1)\beta)$. Then
\begin{align*}
   \lim_{\beta \to 0} \frac{\Delta_{s,\ell+1}(g,\beta) - \Delta_{s,\ell+2}(g,\beta)}{\beta\Delta_{s,\ell+1}(g,\beta)} &= \lim_{\beta \to 0} \frac{e^{\rho_j(t_j - (\ell+3)\beta)} - 2e^{\rho_j(t_j - (\ell+2)\beta)} + e^{\rho_j(t_j - (\ell+1)\beta)}}{\beta(e^{\rho_j(t_j - (\ell+1)\beta)}-e^{\rho_j(t_j - (\ell+2)\beta)})}\\
    &= \lim_{\beta \to 0} \frac{1 - e^{-\rho_j\beta}}{\beta} \\
    &= \rho_j.
\end{align*}
Thus, it is natural to approximate $\rho_j$ by 
\[
\tilde{\rho}_j := \frac{\Delta_{s,\ell+2}(g,\beta) - \Delta_{s,\ell+1}(g,\beta)}{\beta\Delta_{s,\ell+1}(g,\beta)}.
\]
Also, 
\begin{align*}
    \ang{h_j,g} &= \frac{\rho_j(\rho_j+s)\beta^2}{s(e^{\rho_j(t_j - (\ell+2)\beta)} - e^{\rho_j(t_j - (\ell+1)\beta)})}\Delta_{s,\ell+1}(g,\beta) \\ 
&\approx \frac{\tilde{\rho}_j(\tilde{\rho}_j+s)\beta^2}{s(e^{-2\tilde{\rho}_j\beta} - e^{-\tilde{\rho}_j\beta})}\Delta_{s,\ell+1}(g,\beta).
\end{align*}

This brief examination of the ideal scenario illuminates the core of Algorithm 2 (in this  case $\Delta_{s,\ell-2}(g,\beta)=0$). To account for noise, we introduce threshold values as in Algorithm 1. One of the thresholds will be used to determine if a catalyst entered the system in the interval $[\ell\beta,(\ell+1)\beta)$, and another one to cut off  the coefficients $\ang{h_j,g}$ of insufficiently large magnitude. The first threshold will, naturally, coincide with the upper bound on the magnitude of measurements when there is nothing to detect. In particular,  
\begin{equation}\label{error_estim}
\begin{split}
|\varepsilon_{s,\ell}(g,\beta) - \varepsilon_{s,\ell-n}(g,\beta)| 
&\leq \frac{1}{\beta^2}\bigg|\int_{\ell\beta}^{(\ell+1)\beta}(e^{-st}-1)\ang{\eta(t) - \eta(t - n\beta),g}\;dt \bigg| + 4\sigma \\
&\leq \frac{nL\|g\|}{\beta}\int_{\ell\beta}^{(\ell+1)\beta} |e^{-st}-1| \; dt + 4\sigma \\
&= \frac{4n}{\pi}L\|g\| + 4\sigma,
\end{split}
\end{equation}
and $|\Delta_{s,\ell}(g,\beta) - \Delta_{s,\ell - n}(g,\beta)| = |\varepsilon_{s,\ell}(g,\beta) - \varepsilon_{s,\ell-n}(g,\beta)|$ when $(\ell+1)\beta < t_j$. Thus, we define 
\begin{equation}\label{Thresh2}
    Q_n(g) = \frac{4n}{\pi}L\|g\| + 4\sigma, \ n\in\{1,2,3\}.
\end{equation}
The second threshold is given by 
\[
Q_*(g) = Q_1(g) + \hat{\rho}H\|g\|.
\]

To motivate this latter threshold value,  let us examine the measurements (\ref{eq_delta_not_in_int}). Let $\ell_0\beta$ be the initial time  passed to Algorithm 2, and suppose $t_j \in [\ell\beta,(\ell+1)\beta)$ for some integer $\ell > \ell_0$. Then, for all $n \geq 2$, as long as $(\ell+n-1)\beta < t_{j+1}$, we have 
\begin{align*}
|\Delta_{s,\ell+n}(g,\beta) - \Delta_{s,\ell+n-1}(g,\beta)| &= \bigg|s\frac{e^{\rho_j(t - (\ell+n+1)\beta)}(e^{\rho_j\beta}-1)^2}{\beta^2\rho_j(\rho_j+s)}\ang{h_j,g} + \varepsilon_{s,\ell}(g,\beta) - \varepsilon_{s,\ell-1}(g,\beta)\bigg| \\
&\leq \frac{2\pi ke^{-2\rho_j\beta}(e^{\rho_j\beta}-1)^2}{\rho_j\beta^2\sqrt{\rho_j^2\beta^2 + 4\pi^2k^2}}|\ang{h_j,g}| + |\varepsilon_{s,\ell}(g,\beta) - \varepsilon_{s,\ell-1}(g,\beta)| \\
&\leq\rho_j H\|g\| + Q_1(g) \\
&\leq Q_*(g).
\end{align*}
 This means that if $\ell' \in \Z$ is the smallest integer for which $\ell' > \ell_0$ and 
\[
\sup_{g\in \mathcal{G}} \big(|\Delta_{s,\ell'}(g,\beta) - \Delta_{s,\ell' - 1}(g,\beta)| - Q_*(g) \big) > 0,
\]
then $t_j \in [\ell'\beta,(\ell'+1)\beta)$ or $t_j \in [(\ell'-1)\beta,\ell'\beta)$. Either way, $|\ell'\beta - t_j| \leq \beta$. Since Algorithm 2 sets $\estime_j = \ell'\beta$, we have shown that $|\estime_j - t_j| < \beta$ when the intake is detected. 

In recovering the decay rate and coefficient, there are three cases to consider. The first is when $t_j \in [\ell\beta,(\ell+1) \beta)$ and 
\begin{align*}
    \max\{ &\sup_{g\in \mathcal{G}}\big(| \Delta_{s,\ell}(g,\beta) - \Delta_{s,\ell-1}(g,\beta)| - Q_*(g)\big), \\
    &\sup_{g\in \mathcal{G}}\big(|\Delta_{s,\ell+1}(g,\beta) - \Delta_{s,\ell}(g,\beta)| - Q_*(g)\big)\} \leq 0.
\end{align*}
When this happens, Algorithm 2 does not detect the catalyst at time $t_j$, and no data can be recovered. For this case, a uniform upper bound for $|\ang{h_j,g}|$ is given in Proposition \ref{no_recovery}. 

The second case occurs when there exists an $\ell > \ell_0$ such that 
\begin{equation}\label{recovery_condition}
\sup_{g\in \mathcal{G}}\big(| \Delta_{s,\ell}(g,\beta) - \Delta_{s,\ell-1}(g,\beta)| - Q_*(g)\big) > 0,
\end{equation}
and for a particular $g \in \mathcal{G}$, 
\[
|\Delta_{s,\ell+1}(g,\beta) - \Delta_{s,\ell-2}(g,\beta)| \leq Q_3(g) 
\]
(recall from the discussion above that this means $t_j \in [(\ell-1)\beta,(\ell+1)\beta))$. The measurements against this $g\in \mathcal{G}$ will not be used to recover the decay rate, and Algorithm 2 will set $\esshape := 0$. For this case, an upper bound on $|\esshape - \ang{h_j,g}| = |\ang{h_j,g}|$ is given in Proposition \ref{coeff_eq_zero}. 

The final case occurs when, again, there is an $\ell > \ell_0$ such that (\ref{recovery_condition}) holds and, for a particular $g \in \mathcal{G}$, 
\[
|\Delta_{s,\ell+1}(g,\beta) - \Delta_{s,\ell-2}(g,\beta)| > Q_3(g). 
\]
The measurements against such a $g \in \mathcal{G}$ can indeed be used to approximately recover the rate $\rho_j$, and the error of this approximation is bounded in Proposition \ref{rate_recovery}. Finally, an upper bound for the error $|\esshape - \ang{h_j,g}|$ in this case is given in Proposition \ref{full_recovery}.

As the above three cases cover all possible scenarios, Theorem \ref{case1_theorem} will be established once the above mentioned propositions are proved.

\subsection{Proof of Theorem \ref{case1_theorem}}\label{proof_section}
We examine the cases discussed above individually. 

\begin{proposition}\label{no_recovery}
    Assume the hypotheses of Theorem \ref{case1_theorem}. Suppose $t_j \in [\ell\beta,(\ell+1)\beta)$ and 
    \begin{align*}
    \max\{ &\sup_{g\in \mathcal{G}}\big(| \Delta_{s,\ell}(g,\beta) - \Delta_{s,\ell-1}(g,\beta)| - Q_*(g)\big), \\
    &\sup_{g\in \mathcal{G}}\big(|\Delta_{s,\ell+1}(g,\beta) - \Delta_{s,\ell}(g,\beta)| - Q_*(g)\big)\} \leq 0.
\end{align*}
Then 
\[
|\ang{h_j,g}| \leq 2\sqrt{2}\beta e^{2\hat{\rho}\beta}\left(\|g\|(\frac{8}{\pi}L + \hat{\rho}H) + 12\sigma\right)
\]
for all $g \in \mathcal{G}$. 
\end{proposition}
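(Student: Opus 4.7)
The plan is to squeeze $\ang{h_j,g}$ out of a single well-chosen difference of measurements. By the triangle inequality and the two-part hypothesis, $|\Delta_{s,\ell+1}(g,\beta) - \Delta_{s,\ell-1}(g,\beta)| \leq 2Q_*(g)$. The crucial observation is that $h_j$ contributes to only one of $\Delta_{s,\ell-1}$ and $\Delta_{s,\ell+1}$: since the integration interval for $m_{s,\ell-1}$ is $[(\ell-1)\beta,\ell\beta]$ and $t_j \geq \ell\beta$, the indicator $\chi_{[t_j,\infty)}$ vanishes there, so $\Delta_{s,\ell-1}(g,\beta) = \varepsilon_{s,\ell-1}(g,\beta)$, while $\Delta_{s,\ell+1}$ sees $t_j$ already in the past and is described by \eqref{eq_delta_not_in_int} with $\ell$ replaced by $\ell+1$.

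After substituting these expressions and applying \eqref{error_estim} with $n=2$ to bound $|\varepsilon_{s,\ell+1} - \varepsilon_{s,\ell-1}| \leq Q_2(g)$, I would arrive at
\[
\left|\frac{s(e^{\rho_j(t_j-(\ell+2)\beta)} - e^{\rho_j(t_j-(\ell+1)\beta)})}{\rho_j(\rho_j+s)\beta^2}\right|\,|\ang{h_j,g}| \leq 2Q_*(g) + Q_2(g).
\]
To isolate $|\ang{h_j,g}|$, I would lower bound the coefficient as follows: factor $e^{\rho_j(t_j-(\ell+1)\beta)}(1-e^{-\rho_j\beta})$ out of the absolute value of the numerator and apply $1-e^{-x}\geq xe^{-x}$ together with $t_j-(\ell+1)\beta \geq -\beta$ to lower-bound this factor by $\rho_j\beta e^{-2\hat{\rho}\beta}$; compute $|s/(\rho_j+s)| = 2\pi|k|/\sqrt{\rho_j^2\beta^2 + 4\pi^2k^2}$ exactly; and invoke the standing assumption $\beta < 2\pi k/\hat{\rho}$ to conclude $\sqrt{\rho_j^2\beta^2 + 4\pi^2k^2} \leq 2\sqrt{2}\pi|k|$. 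Assembling these yields the coefficient lower bound $e^{-2\hat{\rho}\beta}/(\sqrt{2}\beta)$, and dividing through gives $|\ang{h_j,g}| \leq \sqrt{2}\,\beta\, e^{2\hat{\rho}\beta}(2Q_*(g)+Q_2(g))$, which expands via the definitions of $Q_*$, $Q_1$, and $Q_2$ to the claimed bound.

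The main obstacle is calibrating the coefficient lower bound so that it produces exactly one factor of $\beta$ in the final estimate. The numerator $|s|(1-e^{-\rho_j\beta})$ behaves like $2\pi|k|\rho_j$ as $\beta\to 0$ while the denominator $\rho_j|\rho_j+s|\beta^2$ behaves like $2\pi|k|\rho_j\beta$ in the same regime, so a loose bound on either piece would mis-scale the final estimate. The sharp inequality $1-e^{-x}\geq xe^{-x}$ captures the small-$\beta$ behavior tightly, and the restriction $\beta<2\pi k/\hat{\rho}$ keeps $\sqrt{\rho_j^2\beta^2+4\pi^2k^2}$ proportional to $|k|$ rather than growing with $\rho_j\beta$. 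The linear $\beta$ that emerges reflects the correct physical scaling: smaller time-steps afford higher measurement resolution, so an undetected catalyst must be correspondingly weaker (cf.\ Remark~\ref{SNRremark}).
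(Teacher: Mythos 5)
Your argument is correct and follows essentially the same route as the paper's proof: triangle inequality on the hypothesis to bound $|\Delta_{s,\ell+1}-\Delta_{s,\ell-1}|$ by $2Q_*(g)$, substitution of the explicit measurement formulas (noting $\Delta_{s,\ell-1}=\varepsilon_{s,\ell-1}$ since $t_j\ge\ell\beta$) together with the error bound $|\varepsilon_{s,\ell+1}-\varepsilon_{s,\ell-1}|\le Q_2(g)$, and the same coefficient lower bound $e^{-2\hat\rho\beta}/(\sqrt{2}\beta)$ obtained from $1-e^{-x}\ge xe^{-x}$ and the standing constraint $\beta<2\pi k/\hat\rho$. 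Tracking your constants exactly gives $|\ang{h_j,g}|\le 2\sqrt{2}\beta e^{2\hat\rho\beta}\big(\|g\|(\tfrac{8}{\pi}L+\hat\rho H)+6\sigma\big)$, which is slightly sharper than the $12\sigma$ in the stated proposition (both are valid; the stated bound is simply looser).
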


\begin{proof}
    Let $g \in \mathcal{G}$. Then, by assumption 
    \[
    |\Delta_{s,\ell}(g,\beta) - \Delta_{s,\ell-1}(g,\beta)| \leq Q_*(g)
    \]
    and
    \[
    |\Delta_{s,\ell+1}(g,\beta) - \Delta_{s,\ell}(g,\beta)| \leq Q_*(g).
    \]
    From this it follows that
    \begin{align*}
    |\Delta_{s,\ell+1}(g,\beta) - \Delta_{s,\ell-1}(g,\beta)| 
    &=|\Delta_{s,\ell+1}(g,\beta)-\Delta_{s,\ell}(g,\beta)+\Delta_{s,\ell}(g,\beta)-\Delta_{s,\ell-1}(g,\beta)|\\
    &\le|\Delta_{s,\ell+1}(g,\beta)-\Delta_{s,\ell}(g,\beta)|+|\Delta_{s,\ell}(g,\beta)-\Delta_{s,\ell-1}(g,\beta)|\\
    &\le 2Q_*(g).
    \end{align*}
    Now consider the string of inequalities: 
    \begin{align*}
        2Q_*(g) &\ge |\Delta_{s,\ell+1}(g,\beta) - \Delta_{s,\ell-1}(g,\beta)| \\
        &= \bigg|s\frac{e^{\rho_j(t_j - (\ell+2)\beta)} - e^{\rho_j(t_j - (\ell+1)\beta)}}{\rho_j(\rho_j+s)\beta^2}\ang{h_j,g} + \varepsilon_{s,\ell+1}(g,\beta) - \varepsilon_{s,\ell-1}(g,\beta)\bigg| \\
        &\geq \frac{2\pi k(e^{\rho_j(t_j - (\ell+1)\beta)} - e^{\rho_j(t_j - (\ell+2)\beta)})}{\rho_j\beta^2\sqrt{\rho_j^2\beta^2+4\pi^2k^2}}|\ang{h_j,g}| - Q_2(g) \\
        &\geq \frac{e^{-\rho_j\beta} - e^{-2\rho_j\beta}}{\sqrt{2}\rho_j\beta^2}|\ang{h_j,g}| - Q_2(g),
    \end{align*}
    where we have used the assumption that $\beta < 2\pi k/\hat{\rho}$ (and hence $\rho_j^2\beta^2 < 4\pi^2k^2$). An elementary calculation shows that 
    \[
    \frac{e^{-\rho_j\beta} - e^{-2\rho_j\beta}}{\sqrt{2}\rho_j\beta^2} \geq \frac{1}{\sqrt{2}\beta e^{2\rho_j\beta}}\geq \frac{1}{\sqrt{2}\beta e^{2\hat{\rho}\beta}}. 
    \]
    Therefore 
    \[
    2Q_*(g) > \frac{1}{\sqrt{2}\beta e^{2\hat{\rho}\beta}}|\ang{h_j,g}| - Q_2(g).
    \]
    Rearranging the above gives the stated result.
\end{proof}

\begin{proposition}\label{coeff_eq_zero}
    Assume the hypotheses of Theorem \ref{case1_theorem}. Further assume that there is an $\ell > \ell_0$ for which (\ref{recovery_condition}) holds, and that $\ell$ is the smallest integer satisfying this condition. Given $g\in \mathcal{G}$, if 
    \[
|\Delta_{s,\ell+1}(g,\beta) - \Delta_{s,\ell-2}(g,\beta)| \leq Q_3(g), 
\]
then $\esshape = 0$ and 
\[
|\esshape - \ang{h_j,g}| = |\ang{h_j,g}| < 8\sqrt{2}\beta e^{3\hat{\rho}\beta}(\frac{3}{\pi}L\|g\| + \sigma).
\]
\end{proposition}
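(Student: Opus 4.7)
The plan is to mimic the strategy already used in the proof of Proposition \ref{no_recovery}: combine the explicit formulas of Lemma \ref{derivations} for the quantities $\Delta_{s,\ell\pm n}(g,\beta)$ with the hypothesis $|\Delta_{s,\ell+1}(g,\beta)-\Delta_{s,\ell-2}(g,\beta)|\le Q_3(g)$, and then invert the resulting inequality to produce an upper bound for $|\ang{h_j,g}|$. Because $\ell$ is the smallest integer for which (\ref{recovery_condition}) holds, the argument preceding the proposition gives $t_j\in[(\ell-1)\beta,(\ell+1)\beta)$; in particular $t_j<(\ell+1)\beta$ while the intake $h_j$ has not yet occurred during the measurement interval $[(\ell-2)\beta,(\ell-1)\beta]$.

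Using the second identity in Lemma \ref{derivations} (applied with $\ell$ replaced by $\ell+1$) I would write
\[
\Delta_{s,\ell+1}(g,\beta) = s\,\frac{e^{\rho_j(t_j-(\ell+2)\beta)}-e^{\rho_j(t_j-(\ell+1)\beta)}}{\rho_j(\rho_j+s)\beta^2}\,\ang{h_j,g} + \varepsilon_{s,\ell+1}(g,\beta),
\]
while $\Delta_{s,\ell-2}(g,\beta)=\varepsilon_{s,\ell-2}(g,\beta)$. Subtracting, applying the reverse triangle inequality, and invoking the hypothesis together with the error bound (\ref{error_estim}) with $n=3$ gives
\[
|\ang{h_j,g}|\cdot\Bigg|\frac{s\bigl(e^{\rho_j(t_j-(\ell+1)\beta)}-e^{\rho_j(t_j-(\ell+2)\beta)}\bigr)}{\rho_j(\rho_j+s)\beta^2}\Bigg|\le Q_3(g)+|\varepsilon_{s,\ell+1}-\varepsilon_{s,\ell-2}|\le 2Q_3(g).
\]

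The core of the work is then to produce a clean lower bound on the coefficient factor on the left, valid uniformly over both sub-cases $t_j\in[\ell\beta,(\ell+1)\beta)$ and $t_j\in[(\ell-1)\beta,\ell\beta)$. Writing $\tau=(\ell+1)\beta-t_j\in(0,2\beta]$, the absolute value of the exponential difference equals $e^{-\rho_j\tau}(1-e^{-\rho_j\beta})$, which is minimized at $\tau=2\beta$; using the elementary inequality $1-e^{-x}\ge xe^{-x}$, this is bounded below by $\rho_j\beta\, e^{-3\rho_j\beta}$. Combining this with $|s|/|\rho_j+s|\ge 1/\sqrt{2}$ (a consequence of $\beta<2\pi k/\hat\rho$, exactly as in Proposition \ref{no_recovery}), the whole coefficient is bounded below by $e^{-3\hat\rho\beta}/(\sqrt{2}\,\beta)$, and rearranging yields
\[
|\ang{h_j,g}|\le 2\sqrt{2}\,\beta\, e^{3\hat\rho\beta}\,Q_3(g)=8\sqrt{2}\,\beta\, e^{3\hat\rho\beta}\Big(\tfrac{3}{\pi}L\|g\|+\sigma\Big),
\]
which is the claimed bound.

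The only real obstacle is the case analysis on the location of $t_j$: the $e^{3\hat\rho\beta}$ factor (rather than the $e^{2\hat\rho\beta}$ appearing in Proposition \ref{no_recovery}) is forced by the sub-case $t_j\in[(\ell-1)\beta,\ell\beta)$, where $\tau$ can be as large as $2\beta$. I expect the rest of the argument to be routine, provided one is careful to take the worst case over the full range $\tau\in(0,2\beta]$ so that a single estimate covers both sub-cases.
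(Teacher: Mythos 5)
Your proof is correct and follows exactly the route the paper has in mind: the authors omit the argument, noting only that it is ``nearly identical'' to Proposition \ref{no_recovery}, and your reconstruction is that adaptation carried out faithfully. The hypothesis directly supplies $|\Delta_{s,\ell+1}-\Delta_{s,\ell-2}|\le Q_3(g)$, so you avoid the preliminary triangle-inequality chain used in Proposition \ref{no_recovery}; after expanding $\Delta_{s,\ell+1}$ via Lemma \ref{derivations} (the case $t_j<(\ell+1)\beta$), noting $\Delta_{s,\ell-2}=\varepsilon_{s,\ell-2}$ since $t_j\ge(\ell-1)\beta$, and applying \eqref{error_estim} with $n=3$, the remaining work is the same coefficient lower bound as in Proposition \ref{no_recovery}, with the worst case $\tau\in(0,2\beta]$ correctly accounting for the extra factor $e^{3\hat\rho\beta}$ rather than $e^{2\hat\rho\beta}$. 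The arithmetic $2Q_3(g)\cdot\sqrt{2}\beta e^{3\hat\rho\beta}=8\sqrt{2}\beta e^{3\hat\rho\beta}(\tfrac{3}{\pi}L\|g\|+\sigma)$ also checks out.
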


The proof of the above proposition involves computations nearly identical to those in Proposition \ref{no_recovery}, so we omit it. We now only need to prove the estimates for the third case. We prove the bounds on the error in the rate of decay first. 

\begin{proposition}\label{rate_recovery}
     Assume the hypotheses of Theorem \ref{case1_theorem}. Further assume that there is an $\ell > \ell_0$ for which (\ref{recovery_condition}) holds, and that $\ell$ is the smallest integer satisfying this condition. Assume $g\in \mathcal{G}$ is such that 
     \[
    |\Delta_{s,\ell+1}(g,\beta) - \Delta_{s,\ell-2}(g,\beta)| > Q_3(g).
     \]
     Then 
     \[
     |\Tilde{\rho}_j - \rho_j| \leq \frac{\frac{4}{\pi}L_\ell\|g\|(1 + 3\hat{\rho}\beta) + 4\sigma(1 + \hat{\rho}\beta)}{ \beta|\Delta_{s,\ell+1}(g,\beta) - \Delta_{s,\ell-2}(g,\beta)|} + \hat{\rho} - \frac{1 - e^{-\hat{\rho}\beta}}{\beta}.
     \]
\end{proposition}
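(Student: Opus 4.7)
My plan is to show that in the absence of measurement noise $\tilde{\rho}_j$ coincides exactly with $(1 - e^{-\rho_j\beta})/\beta$, and then to control by the triangle inequality the noise-induced deviation together with the discretization error $\rho_j - (1 - e^{-\rho_j\beta})/\beta$. Under the assumption $t_j \in [(\ell-1)\beta,(\ell+1)\beta)$ (which was established for the minimal such $\ell$ in the discussion preceding the proposition), formula (\ref{eq_delta_not_in_int}) applies to $\Delta_{s,\ell+1}$ and $\Delta_{s,\ell+2}$, while $\Delta_{s,\ell-2}$ receives no contribution from $h_j$ at all and equals $\varepsilon_{s,\ell-2}(g,\beta)$. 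Writing $S$ for the signal part of $\Delta_{s,\ell+1}$, inspection of (\ref{eq_delta_not_in_int}) shows the signal part of $\Delta_{s,\ell+2}$ is $e^{-\rho_j\beta}\,S$, so
\[
\Delta_{s,\ell+1} - \Delta_{s,\ell+2} = (1 - e^{-\rho_j\beta})\,S + (\varepsilon_{s,\ell+1} - \varepsilon_{s,\ell+2}), \quad \Delta_{s,\ell+1} - \Delta_{s,\ell-2} = S + (\varepsilon_{s,\ell+1} - \varepsilon_{s,\ell-2}).
\]

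The key algebraic identity, obtained by eliminating $S$ between the two relations above, reads
\[
(\Delta_{s,\ell+1} - \Delta_{s,\ell+2}) - \tfrac{1-e^{-\rho_j\beta}}{\beta}\,\beta(\Delta_{s,\ell+1} - \Delta_{s,\ell-2}) = (\varepsilon_{s,\ell+1} - \varepsilon_{s,\ell+2}) - (1 - e^{-\rho_j\beta})(\varepsilon_{s,\ell+1} - \varepsilon_{s,\ell-2}).
\]
Dividing by $\beta(\Delta_{s,\ell+1} - \Delta_{s,\ell-2})$, whose absolute value is bounded below by $\beta Q_3(g) > 0$ by hypothesis, and applying (\ref{error_estim}) with $L$ replaced by the local Lipschitz constant $L_\ell$ to bound the two noise differences by $Q_1(g)$ and $Q_3(g)$ respectively, together with $1 - e^{-\rho_j\beta} \le \hat{\rho}\beta$, produces
\[
\Big|\tilde{\rho}_j - \tfrac{1-e^{-\rho_j\beta}}{\beta}\Big| \le \frac{Q_1(g) + \hat{\rho}\beta\,Q_3(g)}{\beta|\Delta_{s,\ell+1} - \Delta_{s,\ell-2}|} = \frac{\tfrac{4}{\pi}L_\ell\|g\|(1 + 3\hat{\rho}\beta) + 4\sigma(1 + \hat{\rho}\beta)}{\beta|\Delta_{s,\ell+1} - \Delta_{s,\ell-2}|}.
\]

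The proof is completed by the triangle inequality $|\tilde{\rho}_j - \rho_j| \le |\tilde{\rho}_j - (1-e^{-\rho_j\beta})/\beta| + |(1-e^{-\rho_j\beta})/\beta - \rho_j|$, noting that the map $x \mapsto x - (1-e^{-x\beta})/\beta$ has nonnegative derivative $1 - e^{-x\beta}$ on $(0,\infty)$ and is therefore nondecreasing, so the second term is at most $\hat{\rho} - (1-e^{-\hat{\rho}\beta})/\beta$. The main obstacle I anticipate is a careful sign and orientation audit: the numerator of $R$ in Algorithm 2 as written gives a signal part of $-(1-e^{-\rho_j\beta})/\beta$, so one has to verify the intended orientation (likely by implicitly identifying $R$ with $|R|$, or by flipping the numerator so that it reads $\Delta_{s,\ell+1} - \Delta_{s,\ell+2}$). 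A secondary point is the parallel subcase $t_j \in [(\ell-1)\beta,\ell\beta)$; there the index shift replaces the base exponent $e^{\rho_j(t_j-(\ell+1)\beta)}$ by $e^{\rho_j(t_j-\ell\beta)}$ but leaves the geometric ratio between the consecutive signals unchanged, so the same identity and bound go through verbatim.
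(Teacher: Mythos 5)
Your proof is correct and follows essentially the same path as the paper's: both isolate the signal parts of $\Delta_{s,\ell+1},\Delta_{s,\ell+2},\Delta_{s,\ell-2}$, exploit the geometric ratio $e^{-\rho_j\beta}$ between consecutive signal parts, form the difference $(A+\delta_1)/(B+\delta_2) - A/B$ with $A/B = (1-e^{-\rho_j\beta})/\beta$, bound the numerator by $Q_1(g) + \hat\rho\beta\,Q_3(g)$ via \eqref{error_estim}, and finish with the triangle inequality and the monotonicity of $x \mapsto x - (1-e^{-x\beta})/\beta$ to replace $\rho_j$ by $\hat\rho$ in the discretization term. Your sign observation is a genuine catch — the numerator of $R$ in the pseudo-code (line 10) reads $\Delta_{s,\ell+2}-\Delta_{s,\ell+1}$ while the proof of this proposition uses $\Delta_{s,\ell+1}-\Delta_{s,\ell+2}$ — but it is a typographical inconsistency in the paper and does not affect the argument.
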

     \begin{proof}
         It is convenient to define the following values:
    \begin{align*}
        A &= -s\frac{e^{\rho_j(t_j-(\ell+3)\beta)}(e^{\rho_j\beta}-1)^2}{\rho_j(\rho_j+s)\beta^2} \\
        B &= \beta s\frac{e^{\rho_j(t_j - (\ell+2)\beta)}-e^{\rho_j(t_j - (\ell+1)\beta)}}{\rho_j(\rho_j+s)\beta^2}\\
        \delta_1 &=  \varepsilon_{s,\ell+1}(g,\beta) - \varepsilon_{s,\ell+2}(g,\beta) \\
        \delta_2 &= \beta(\varepsilon_{s,\ell+1}(g,\beta) - \varepsilon_{s,\ell-2}(g,\beta))
    \end{align*}
    This way, 
    \begin{equation*}
        \Tilde{\rho}_j = \frac{\Delta_{s, \ell + 1}(g,\beta) - \Delta_{s,\ell+2}(g,\beta)}{\beta(\Delta_{s,\ell+1}(g,\beta)-\Delta_{s,\ell-2}(g,\beta))}  = \frac{A+\delta_1}{B+\delta_2}, \text{ and } \frac{A}{B} = \frac{1 - e^{-\rho_j\beta}}{\beta}.
    \end{equation*}
    In this notation, we can estimate:
    \begin{align*}
        |\frac{A+\delta_1}{B+\delta_2} - \rho_j| &\leq |\frac{A+\delta_1}{B+\delta_2}-\frac{A}{B}| + |\frac{A}{B} - \rho_j| \\ 
        &= \bigg|\frac{B\delta_1 - A\delta_2}{B(B+\delta_2)}\bigg| + \bigg(\rho_j - \frac{1 - e^{-\rho_j\beta}}{\beta}\bigg) \\
        &\leq \frac{|\delta_1| + \frac{A}{B}|\delta_2|}{|B+\delta_2|} + \bigg(\rho_j - \frac{1 - e^{-\rho_j\beta}}{\beta}\bigg) \\
        &\leq \frac{|\delta_1| + \rho_j|\delta_2|}{|B+\delta_2|} + \bigg(\rho_j - \frac{1 - e^{-\rho_j\beta}}{\beta}\bigg).
    \end{align*}
    
Using (\ref{error_estim}), we have the upper bounds 
\begin{equation*}
    |\delta_1| \leq \frac{4}{\pi}L\|g\| + 4\sigma \text{ and } \rho_j|\delta_2| \leq \beta(\frac{12}{\pi}L\|g\|\hat{\rho} + 4\sigma\hat{\rho}),
\end{equation*}
so 
\[
|\delta_1| + \rho_j|\delta_2| \leq \frac{4}{\pi}L\|g\|(1 + 3\hat{\rho}\beta) + 4\sigma(1 + \hat{\rho}\beta).
\]
We conclude that 
\begin{align*}
    |\tilde{\rho}_j - \rho_j| &\leq \frac{\frac{4}{\pi}L_\ell\|g\|(1 + 3\hat{\rho}\beta) + 4\sigma(1 + \hat{\rho}\beta)}{\beta|\Delta_{s,\ell+1}(g,\beta)-\Delta_{s,\ell-2}(g,\beta)|} + \hat{\rho} - \frac{1 - e^{-\hat{\rho}\beta}}{\beta}
\end{align*}
as claimed.
     \end{proof}

It remains to estimate the error in recovering the coefficient in the case when 
\[
|\Delta_{s,\ell+1}(g,\beta) - \Delta_{s,\ell-2}(g,\beta)| > Q_3(g).
\]
This estimate is the most technical in this section and needs the following  lemma. 
\begin{lemma}\label{lemmaIHate}
Suppose $\beta > 0$, and consider the function 
\[
f(x) = \frac{e^{\beta x}-1}{x}, \quad f(0)=\beta.
\]
Then, for any $a \in \R$, the function 
\[
g_a(x) = \left|1 - \frac{f(x)}{f(x+a)}\right|
\]
is nondecreasing on $[0,\infty)$. In particular, 
\[
g_a(x) \leq \lim_{x \to \infty} g_a(x) = |1 - e^{-a\beta}| \leq e^{|a|\beta}-1
\]
\end{lemma}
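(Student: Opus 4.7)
The plan is to exploit the integral representation of $f$. For every $x \in \R$, one has
\[
f(x) = \frac{e^{\beta x}-1}{x} = \int_0^\beta e^{xt}\, dt,
\]
where the right-hand side is defined and strictly positive for all $x \in \R$ (including the extended value $f(0) = \beta$). In particular, $f$ is smooth, strictly positive and strictly increasing on $\R$, and $\log f$ is (up to an additive constant) the cumulant generating function of the uniform measure on $[0,\beta]$. The first key input is that $\log f$ is \emph{strictly convex}: concretely, $(\log f)''(x) = \mathrm{Var}(T_x)$ where $T_x$ has density proportional to $e^{xt}$ on $[0,\beta]$, which is positive because $T_x$ is not a point mass. (Equivalently, a strict Cauchy--Schwarz inequality applied to the pair $1, t \in L^2([0,\beta], e^{xt}\,dt)$.)

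Next, set $\psi(x) := f(x)/f(x+a)$, so that $g_a(x) = |1 - \psi(x)|$ and $\psi > 0$ everywhere. Logarithmic differentiation yields
\[
\frac{\psi'(x)}{\psi(x)} = (\log f)'(x) - (\log f)'(x+a),
\]
so by the strict convexity of $\log f$, $\psi'$ has sign opposite to $a$. Since $f$ is strictly increasing, $\psi(x) < 1$ when $a > 0$ and $\psi(x) > 1$ when $a < 0$, so $1 - \psi$ keeps a constant sign in each case. Combining these two observations, in both sign cases $|1 - \psi|$ is nondecreasing on $[0,\infty)$ (indeed on all of $\R$). This settles monotonicity.

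For the limit, since $e^{\beta x} \to \infty$ as $x \to \infty$, elementary asymptotics give
\[
\lim_{x \to \infty} \psi(x) \;=\; \lim_{x \to \infty} \frac{(x+a)(e^{\beta x}-1)}{x\bigl(e^{a\beta} e^{\beta x}-1\bigr)} \;=\; e^{-a\beta},
\]
so $\sup_{x \ge 0} g_a(x) = \lim_{x \to \infty} g_a(x) = |1 - e^{-a\beta}|$. The closing inequality $|1 - e^{-a\beta}| \le e^{|a|\beta} - 1$ reduces to a trivial case check: for $a \ge 0$ it is equivalent to $e^{a\beta} + e^{-a\beta} \ge 2$ (AM--GM), while for $a < 0$ the two sides are equal. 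The main conceptual ingredient is the strict log-convexity of $f$; the rest is careful sign bookkeeping in $a$, which I expect to be the only mild nuisance in writing out the argument.
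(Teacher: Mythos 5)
Your proof is correct, and it takes a genuinely different and cleaner route than the paper's. The paper argues only for $a \geq 0$ (claiming the case $a < 0$ is similar), reduces via the quotient rule to showing $f(x)f'(x+a) - f'(x)f(x+a) \geq 0$, and then establishes this by two rounds of brute-force differentiation of the explicit expression $x^2(x+a)^2\bigl(f(x)f'(x+a) - f'(x)f(x+a)\bigr)$, checking that it vanishes at $x=0$ and that its $x$-derivative is nonnegative (itself by another nested argument of the same kind). Your approach replaces all of that with a single structural fact: the integral representation $f(x)=\int_0^\beta e^{xt}\,dt$ identifies $\log f$ as a cumulant generating function, hence strictly convex, which makes $(\log f)'$ strictly increasing, and the sign of $\psi' = \psi\cdot\bigl((\log f)'(x) - (\log f)'(x+a)\bigr)$ drops out at once, handling both signs of $a$ symmetrically. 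What the paper's computation buys is that it is entirely elementary and self-contained (no appeal to log-convexity of Laplace transforms); what your argument buys is brevity, a uniform treatment of both sign cases, and a clear conceptual reason \emph{why} the monotonicity holds, plus the bonus that it shows monotonicity on all of $\R$ rather than just $[0,\infty)$. The limit computation and the closing inequality $|1-e^{-a\beta}|\le e^{|a|\beta}-1$ are handled the same way in both and are routine.
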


The proof is given in the appendix. In the notation of the above lemma, it follows that if $a < 0$, then $f(x)/f(x+a) \geq 1$ for each $x \in [0,\infty)$, and that $f(x)/f(x+a)$ is increasing. This fact is used in (\ref{some_eq}) below. 

\begin{proposition}\label{full_recovery}
    Assume the hypotheses of Theorem \ref{case1_theorem}. Further assume that there is an $\ell > \ell_0$ for which (\ref{recovery_condition}) holds, and that $\ell$ is the smallest integer satisfying this condition. Assume $g\in \mathcal{G}$ is such that 
     \[
    |\Delta_{s,\ell+1}(g,\beta) - \Delta_{s,\ell-2}(g,\beta)| > Q_3(g).
     \]
     Then 
     \begin{align*}
         |\esshape - \ang{h_j,g}| &\leq  H\|g\|\bigg(e^{\beta(3|\Tilde{\rho}_j-\rho_j| + \hat{\rho})} - 1 + \frac{|\epsilon_\rho|\beta e^{\beta(3|\tilde{\rho}_j-\rho_j|) + \hat{\rho})}}{\sqrt{\rho_j^2\beta^2 + 4\pi^2k^2}}\bigg)\\
        &+ 4\sqrt{2}\beta e^{2\hat{\rho}\beta}\left(\frac{3}{\pi}L\|g\| + \sigma\right).
     \end{align*}
\end{proposition}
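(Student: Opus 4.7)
\textbf{Proof proposal for Proposition \ref{full_recovery}.} The plan is to write $\mathfrak{f}_j(g) - \langle h_j,g\rangle$ as the sum of a deterministic \emph{signal distortion} (multiplied by $\langle h_j,g\rangle$) and a \emph{noise amplification} (multiplied by $\varepsilon_{s,\ell+1} - \varepsilon_{s,\ell-2}$), and to bound each separately. The catalyst separation $t_{j+1} - t_j \geq 4\beta + D$ together with $t_j \in [\ell\beta,(\ell+1)\beta)$ places $t_j$ outside $[(\ell-2)\beta, (\ell-1)\beta)$ and forces $t_{j+1} > (\ell+2)\beta$, so (after absorbing earlier catalysts into $\tilde\eta$) only $h_j$ contributes a signal term in this window, and in fact $\Delta_{s,\ell-2}(g,\beta) = \varepsilon_{s,\ell-2}(g,\beta)$. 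Applying \eqref{eq_delta_not_in_int} with index $\ell+1$ (since $t_j < (\ell+1)\beta$) gives
\[
\Delta_{s,\ell+1}(g,\beta) - \Delta_{s,\ell-2}(g,\beta) = \frac{s(e^{-\rho_j\beta}-1)\,e^{\rho_j(t_j - (\ell+1)\beta)}}{\rho_j(\rho_j+s)\beta^2}\langle h_j,g\rangle + (\varepsilon_{s,\ell+1} - \varepsilon_{s,\ell-2}).
\]
Plugging this into step 14 of Algorithm 2 and using $e^{-2\tilde\rho_j\beta} - e^{-\tilde\rho_j\beta} = e^{-\tilde\rho_j\beta}(e^{-\tilde\rho_j\beta}-1)$ produces
\[
\mathfrak{f}_j(g) - \langle h_j,g\rangle = (A_1 A_2 A_3 - 1)\langle h_j,g\rangle + C(\tilde\rho_j)(\varepsilon_{s,\ell+1} - \varepsilon_{s,\ell-2}),
\]
where, with $f(x) = (e^{\beta x}-1)/x$ as in Lemma \ref{lemmaIHate}, I set $A_1 = e^{(\tilde\rho_j - \rho_j)\beta}\,f(\rho_j)/f(\tilde\rho_j)$, $A_2 = (\tilde\rho_j + s)/(\rho_j + s)$, $A_3 = e^{(\tilde\rho_j - \rho_j)\beta + \rho_j(t_j - \ell\beta)}$, and $C(\tilde\rho_j) = \tilde\rho_j(\tilde\rho_j + s)\beta^2/[s(e^{-2\tilde\rho_j\beta} - e^{-\tilde\rho_j\beta})]$.

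For the signal distortion I apply the identity $A_1 A_2 A_3 - 1 = A_1 A_3 (A_2 - 1) + (A_1 A_3 - 1)$. A direct calculation gives $|A_2 - 1| = |\tilde\rho_j - \rho_j|\beta/\sqrt{\rho_j^2\beta^2 + 4\pi^2 k^2}$, which contributes the second summand inside the parenthesis of \eqref{coeff_error}. The quantity $A_1 A_3 = e^{2(\tilde\rho_j - \rho_j)\beta + \rho_j(t_j - \ell\beta)}\,f(\rho_j)/f(\tilde\rho_j)$ is a positive real, and the integral representation $f(x) = \int_0^\beta e^{tx}\,dt$ (or equivalently Lemma \ref{lemmaIHate}) delivers the two-sided bound $e^{-|\tilde\rho_j - \rho_j|\beta} \leq f(\rho_j)/f(\tilde\rho_j) \leq e^{|\tilde\rho_j - \rho_j|\beta}$. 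Combined with $\rho_j(t_j - \ell\beta) \in [0,\hat\rho\beta]$, this yields $e^{-3|\tilde\rho_j - \rho_j|\beta} \leq A_1 A_3 \leq e^{(3|\tilde\rho_j - \rho_j|+\hat\rho)\beta}$. Since $1 - e^{-x} \leq e^x - 1$ for $x\geq 0$, I obtain $|A_1 A_3 - 1| \leq e^{(3|\tilde\rho_j - \rho_j| + \hat\rho)\beta} - 1$, and multiplying by $|\langle h_j,g\rangle| \leq H\|g\|$ recovers the first line of \eqref{coeff_error}.

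For the noise amplification, the standing hypothesis $\beta < 2\pi k/\hat\rho$ forces $\tilde\rho_j^2\beta^2 < 4\pi^2 k^2$, hence $|\tilde\rho_j + s| \leq \sqrt{2}|s|$. Combined with the elementary inequality $x/(1-e^{-x}) \leq e^x$ (for $x>0$) evaluated at $x = \tilde\rho_j\beta$, this gives $|C(\tilde\rho_j)| \leq \sqrt{2}\beta e^{2\tilde\rho_j\beta} \leq \sqrt{2}\beta e^{2\hat\rho\beta}$. The bound $|\varepsilon_{s,\ell+1} - \varepsilon_{s,\ell-2}| \leq \tfrac{12}{\pi}L\|g\| + 4\sigma$ from \eqref{error_estim} with $n = 3$ then contributes exactly $4\sqrt{2}\beta e^{2\hat\rho\beta}(\tfrac{3}{\pi}L\|g\| + \sigma)$ to the total error, completing the argument. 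The main technical hurdle is to arrange the signal distortion so that the exponent involves $3|\tilde\rho_j - \rho_j|$ rather than the weaker $4|\tilde\rho_j - \rho_j|$ that naive term-by-term bookkeeping of $A_1$ and $A_3$ would produce: the key is the rewriting $A_1 = e^{(\tilde\rho_j - \rho_j)\beta}\,f(\rho_j)/f(\tilde\rho_j)$, which moves one factor of $e^{(\tilde\rho_j - \rho_j)\beta}$ out of the $f$-ratio and allows Lemma \ref{lemmaIHate} to be invoked only once on the combined product $A_1 A_3$.
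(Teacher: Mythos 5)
Your proof is correct and follows essentially the same route as the paper: write $\mathfrak{f}_j(g)$ as a rational multiplier $\theta(\rho_j,t_j,\beta,\ell)/\theta(\tilde\rho_j,\ell\beta,\beta,\ell)$ times $\langle h_j,g\rangle$ plus an amplified noise term, bound the multiplier's deviation from $1$ using Lemma \ref{lemmaIHate} (your integral representation $f(x)=\int_0^\beta e^{tx}\,dt$ gives an equivalent two-sided bound on the $f$-ratio), and bound the noise amplification via $|\varepsilon_{s,\ell+1}-\varepsilon_{s,\ell-2}|\le\frac{12}{\pi}L\|g\|+4\sigma$ and $|1/\theta(\tilde\rho_j,\ell\beta,\beta,\ell)|\le\sqrt{2}\beta e^{2\hat\rho\beta}$. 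Your $A_1A_2A_3$ factorization with the split $A_1A_2A_3-1 = A_1A_3(A_2-1)+(A_1A_3-1)$ is just a slightly tidier bookkeeping of the same three estimates the paper makes (the paper first peels off $|1-e^{2\beta\epsilon_\rho+\rho_j(t_j-\ell\beta)}|$ and then splits the remaining product, ultimately recombining to the identical exponent $3|\epsilon_\rho|+\hat\rho$), so the substance is unchanged.
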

\begin{proof}
    Suppose that 
    \[
|\Delta_{s,\ell+1}(g,\beta) - \Delta_{s,\ell-2}(g,\beta)| > Q_3(g),
\]
and define $\epsilon_\rho = \tilde{\rho}_j - \rho_j$. Furthermore, we define the function: 
\[
    \theta(\rho,t,\beta,\ell) = \frac{s(e^{\rho(t - (\ell + 2)\beta)} - e^{\rho(t - (\ell + 1)\beta)})}{\rho(\rho+s)\beta^2}. 
    \]
Algorithm 2 will define 
\begin{align*}
\esshape &:= \frac{\Tilde{\rho}_j(\tilde{\rho}_j+s)\beta^2}{s\cdot(e^{-2\Tilde{\rho}_j\beta} - e^{-\Tilde{\rho}_j\beta})} (\Delta_{s,\ell+1}(g,\beta)-\Delta_{s,\ell-2}(g,\beta))\\
&=\frac{\theta(\rho_j,t_j,\beta,\ell)}{\theta(\tilde{\rho}_j,\ell\beta,\beta,\ell)}\ang{h_j,g} + \frac{\varepsilon_{s,\ell+1}(g,\beta) - \varepsilon_{s,\ell-2}(g,\beta)}{\theta(\tilde{\rho}_j,\ell\beta,\beta,\ell)}.
\end{align*}
Using the triangle inequality, it follows that 
    \begin{align*}
        |\esshape - \ang{h_j,g}| &\leq \bigg| \frac{\theta(\rho_j,t_j,\beta,\ell)}{\theta(\tilde{\rho}_j,\ell\beta,\beta,\ell)} - 1 \bigg|\cdot |\ang{h_j,g}| + \bigg| \frac{\varepsilon_{s,\ell+1}(g,\beta) - \varepsilon_{s,\ell-2}(g,\beta)}{\theta(\tilde{\rho}_j,\ell\beta,\beta,\ell)}\bigg| \\
        &\leq \bigg| \frac{\theta(\rho_j,t_j,\beta,\ell)}{\theta(\tilde{\rho}_j,\ell\beta,\beta,\ell)} - 1 \bigg|H\|g\| + \bigg| \frac{\varepsilon_{s,\ell+1}(g,\beta) - \varepsilon_{s,\ell-2}(g,\beta)}{\theta(\tilde{\rho}_j,\ell\beta,\beta,\ell)}\bigg|.
    \end{align*}
    The latter term can be easily bounded: 
     \begin{align*}
        \bigg| \frac{\varepsilon_{s,\ell+1}(g,\beta) - \varepsilon_{s,\ell-2}}{\theta(\tilde{\rho}_j,\ell\beta,\beta,\ell)}\bigg| &= \bigg|\frac{\tilde{\rho}_j (\tilde{\rho}_j + s)\beta^2}{s(e^{-2\tilde{\rho}_j\beta}-e^{-\tilde{\rho}_j\beta})}\bigg|\cdot (\frac{12}{\pi}L_\ell\|g\| + 4\sigma) \\
        &\leq \frac{\tilde{\rho}_j|\tilde{\rho}_j+s|\beta^2}{|s|(e^{-\tilde{\rho}_j\beta}-e^{-2\tilde{\rho}_j\beta})}(\frac{12}{\pi}L_\ell\|g\| + 4\sigma)\\
        &\leq \sqrt{2}\beta e^{2\hat{\rho}\beta}(\frac{12}{\pi}L_\ell\|g\| + 4\sigma),
    \end{align*}
    where we have used a similar calculation to that used in the proof of Proposition \ref{no_recovery}. 

    As for the other term, using $s=2\pi  i k/\beta$, we have
    \begin{align*}
        \bigg|\frac{\theta(\rho_j,t_j,\beta,\ell)}{\theta(\tilde{\rho}_j,\ell\beta,\beta,\ell)} - 1 \bigg| &= \bigg| e^{2\beta\epsilon_\rho + \rho_j(t_j - \ell\beta)} \frac{(e^{\rho_j\beta} - 1)(\rho_j + \epsilon_\rho)(\rho_j +  \epsilon_\rho + s)}{(e^{(\rho_j + \epsilon_\rho)\beta}-1)\rho_j(\rho_j + s)} - 1\bigg| \\
        &= \bigg| e^{2\beta\epsilon_\rho + \rho_j(t_j - \ell\beta)} \frac{(e^{\rho_j\beta} - 1)(\rho_j + \epsilon_\rho)(\rho_j\beta + 2\pi i k + \epsilon_\rho\beta)}{(e^{(\rho_j + \epsilon_\rho)\beta}-1)\rho_j(\rho_j\beta + 2\pi i k)} - 1\bigg| \\
        &\leq |1 - e^{2\beta\epsilon_\rho + \rho_j(t_j - \ell\beta)}| + e^{2\beta\epsilon_\rho+\rho_j(t_j-\ell\beta)} \bigg|\frac{(e^{\rho_j\beta} - 1)(\rho_j + \epsilon_\rho)(\rho_j\beta + 2\pi i k + \epsilon_\rho\beta)}{(e^{(\rho_j + \epsilon_\rho)\beta}-1)\rho_j(\rho_j\beta + 2\pi i k)} - 1\bigg| \\
        &\leq (e^{\beta(2|\epsilon_\rho| + \rho_j)}-1) + e^{\beta(2|\epsilon_\rho| + \rho_j)}\bigg[\big| 1 - \frac{(e^{\rho_j\beta} - 1)(\rho_j+\epsilon_\rho)}{(e^{(\rho_j+\epsilon_\rho)\beta}-1)\rho_j}\big| \\
        &+ \big|\frac{\epsilon_\rho\beta(e^{\rho_j\beta}-1)(\rho_j+\epsilon_\rho)}{(e^{(\rho_j+\epsilon_\rho)\beta}-1)\rho_j(\rho_j\beta + 2\pi i k)} \big|\bigg].
    \end{align*}
    By Lemma \ref{lemmaIHate},
    \[
    \big| 1 - \frac{(e^{\rho_j\beta} - 1)(\rho_j+\epsilon_\rho)}{(e^{(\rho_j+\epsilon_\rho)\beta}-1)\rho_j}\big| \leq e^{|\epsilon_\rho|\beta}-1.
    \]
    Similarly, 
    \begin{equation}\label{some_eq}
    \begin{split}
\big|\frac{\epsilon_\rho\beta(e^{\rho_j\beta}-1)(\rho_j+\epsilon_\rho)}{(e^{(\rho_j+\epsilon_\rho)\beta}-1)\rho_j(\rho_j\beta + 2\pi i k)} \big| &\leq \frac{|\epsilon_\rho|\beta}{\sqrt{\rho_j^2\beta^2 + 4\pi^2k^2}}\bigg(\lim_{\rho_j\to\infty} \frac{(e^{\rho_j\beta}-1)(\rho_j-|\epsilon_\rho|)}{(e^{(\rho_j-|\epsilon_\rho|)\beta}-1)\rho_j} \bigg) \\
&= \frac{|\epsilon_\rho|\beta e^{|\epsilon_\rho|\beta}}{\sqrt{\rho_j^2\beta^2 + 4\pi^2k^2}}.
\end{split}
\end{equation}
Putting this all together yields 
    \begin{align*}
        |\esshape - \ang{h_j,g}| &\leq H\|g\|\bigg(e^{\beta(2|\epsilon_\rho| + \rho_j)}-1 + e^{\beta(2|\epsilon_\rho| + \rho_j)}\bigg[e^{|\epsilon_\rho|\beta}-1 + \frac{|\epsilon_\rho|\beta e^{|\epsilon_\rho|\beta}}{\sqrt{\rho_j^2\beta^2 + 4\pi^2k^2}}\bigg]\bigg) \\
        &+ \sqrt{2}\beta e^{2\hat{\rho}\beta}(\frac{12}{\pi}L_\ell\|g\| + 4\sigma) \\
        &\leq H\|g\|\bigg(e^{\beta(3|\epsilon_\rho| + \hat{\rho})} - 1 + \frac{|\epsilon_\rho|\beta e^{\beta(3|\epsilon_\rho|) + \hat{\rho})}}{\sqrt{\rho_j^2\beta^2 + 4\pi^2k^2}}\bigg)\\
        &+ \sqrt{2}\beta e^{2\hat{\rho}\beta}(\frac{12}{\pi}L_\ell\|g\| + 4\sigma)
    \end{align*}
    as claimed.
\end{proof}
    
Notice, all upper bounds given for $|\esshape - \ang{h_j,g}|$ in Propositions \ref{no_recovery}, \ref{coeff_eq_zero}, and \ref{full_recovery}, are no greater than the first bound stated in Theorem \ref{case1_theorem}. To obtain the second inequality, we can replace $|\tilde{\rho}_j - \rho_j|$ with $\hat{\rho} - \check{\rho}$, and note $2\pi k \leq \sqrt{\hat{\rho}\beta^2 + 4\pi^2k^2}$ to get 
    \begin{align*}
        |\esshape - \ang{h_j,g}| &\leq H\|g\|\bigg(e^{\beta(3(\hat{\rho}-\check{\rho}) + \hat{\rho})} - 1 + \frac{(\hat{\rho}-\check{\rho})\beta e^{\beta(3(\hat{\rho}-\check{\rho}) + \hat{\rho})}}{2\pi k}\bigg)\\
        &+ \sqrt{2}\beta e^{3\hat{\rho}\beta}(\frac{24}{\pi}L_\ell \|g\| + 12\sigma).
    \end{align*}
    Using the formula for the remainder of the Taylor series, we can estimate:
    \[
    e^{\beta(3(\hat{\rho}-\check{\rho})+\hat{\rho})}-1 \leq \beta(3(\hat{\rho}-\check{\rho})+\hat{\rho})e^{\beta(3(\hat{\rho}-\check{\rho})+\hat{\rho})}.
    \]
    This results in the weaker upper bound 
    \begin{align*}
       |\esshape - \ang{h_j,g}| &\leq \beta\bigg(H\|g\|\big((3(\hat{\rho}-\check{\rho})+\hat{\rho})e^{\beta(3(\hat{\rho}-\check{\rho})+\hat{\rho})} + \frac{(\hat{\rho} - \check{\rho})e^{\beta(3(\hat{\rho}-\check{\rho})) + \hat{\rho})}}{2\pi k}\big) \nonumber\\
    &+ \sqrt{2}\beta e^{3\hat{\rho}\beta}(\frac{24}{\pi}L_\ell \|g\| + 12\sigma) \bigg).
    \end{align*}
    Moreover, the second inequality in (\ref{rate_error}) is obtained simply by using the Taylor series 
    \[
    \hat{\rho} - \frac{1-e^{-\hat{\rho}\beta}}{\beta} \leq \frac{\hat{\rho}^2}{2}\beta
    \]
    and rearranging. This completes the proof of the Theorem.

\section{Numerical Experiments}\label{sec5}
To assess the performance of our algorithms, we apply them to a specific initial value problem in $\HH = L^2([0,1])$. We use
\begin{equation*}
\begin{cases}
\dot{u}(t)=u(t)+\sum\limits_{i=1}^3 h_i e^{\rho_i(t_i-t)}\chi_{[t_i, \infty)}(t)+\eta(t)\\
u(0)=0\\
\end{cases}
\end{equation*}
with $h_1(x)=3\sin(x)$, $h_2(x)=2.5\cos(x)$, $h_3(x)=x+2$, $x\in[0,1]$, $t_1=0.25$, $t_2=2.54$, $t_3=4.78$, $t\in[0,5]$, $\rho_1= 1$, $\rho_2= 2$, $\rho_3= 3.$ We consider two different types of background sources: $(\eta(t))(x)=xe^{-Lt}$ and $(\eta(t))(x)=x\sin(Lt)$.
As the sensor functions,
we use $g_1(x)=1$, $g_2(x)=x$, and $g_3(x)=x^2$. We first provide a simulation in which $D=2$, $L=10^{-2}$, and the noise level $\sigma=10^{-3}$. We show the error in both algorithms in recovering $(\rho_j,t_j,\ang{h_i,g_j})$ ($i,j = 1,2,3$) with these parameters. Then, we illustrate the sensitivity of the (average) error in the recovery of $\ang{h_i,g_j}$ to change of the parameters $\beta,L$ and $\sigma$.

\subsection{Simulation} As noted above, the parameters $D = 2$, $L = 10^{-2}$, and $\sigma = 10^{-3}$ are fixed in this section until otherwise specified. Figure \ref{fig1} below shows the error in recovering the times and coefficients for Algorithm 1. 

\begin{figure}[H]
    \centering
    \subfloat[$\eta=xe^{-Lt}$]{\includegraphics[width=0.47\linewidth]{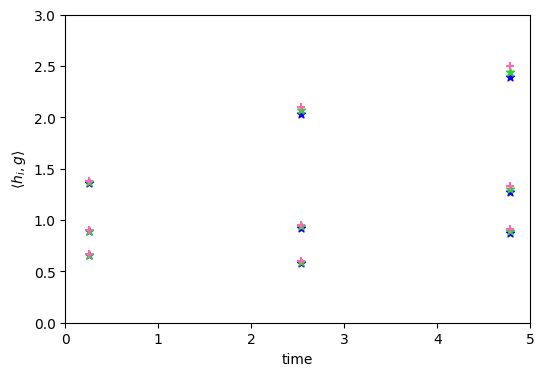}}
    \qquad
    \subfloat[$\eta=xsin(Lt)$]{\includegraphics[width=0.47\linewidth]{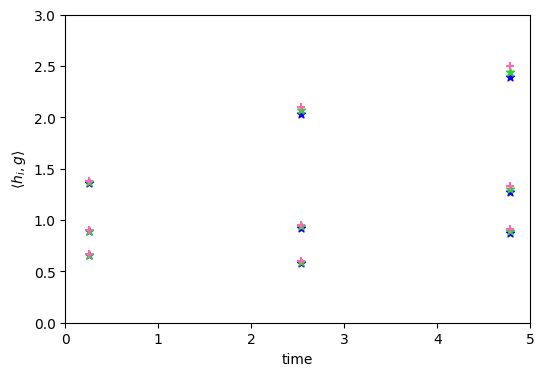}}
\caption{Simulation results for Algorithm 1. The results for $h_i$ lie in the $i$-th column. Pink plus signs stand for the ground truth $\langle h_i, g_j\rangle$, blue stars stand for the output $\mathfrak f_i(g_j)$ when $\beta=0.01$ and green stars stand for the output $\mathfrak f_i(g_j)$ when $\beta=0.005.$}
    \label{fig1}
\end{figure}

\noindent Figure $\ref{fig1}$ displays both the estimates and the ground truth in a single plot. The results indicate that our algorithms can successfully find all bursts, and the error decreases as the time step $\beta$ is reduced. Figure \ref{fig2} below provides a similar plot for Algorithm 2 using only the case when $\beta = .01$ (we omit the case when $\beta = 0.005$ as the results are not visually distinguishable from the case when $\beta = .01$).

\begin{figure}[H]
    \centering
    \subfloat[$\eta=xe^{-Lt}$]{\includegraphics[width=0.47\linewidth]{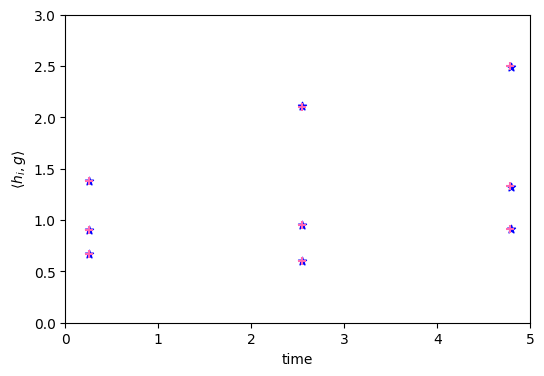}}
    \qquad
    \subfloat[$\eta=xsin(Lt)$]{\includegraphics[width=0.47\linewidth]{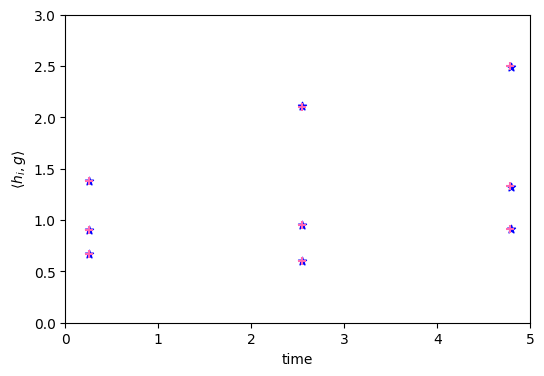}}
\caption{Simulation results for Algorithm 2. The results for $h_i$ lie in the $i$-th column. Pink plus signs stand for the ground truth $\langle h_i, g_j\rangle$, blue stars stand for the output $\mathfrak f_i(g_j)$ when $\beta=0.01$.}
    \label{fig2}
\end{figure}

We evaluated the accuracy of the estimates of the decay rate $\rho_j\ (j=1,2,3)$ by calculating the relative error:
$$\frac{|\rho_j-\Tilde{\rho}_j|}{|\rho_j|},\quad j=1,2,3.$$ Figure \ref{fig3} below presents the relative error in recovery of each $\rho_j$ for Algorithm 1 for various values of $N$ (defined in Theorem \ref{thm1}). We also show these errors for the ideal case when $L = \sigma = 0$. 

\begin{figure}[H]
    \centering
\subfloat[$\eta(x,t)=x\sin(Lt),\ L=10^{-2},\ \sigma=10^{-3},\ \beta=0.01$]
    {\includegraphics[width=0.47\linewidth]{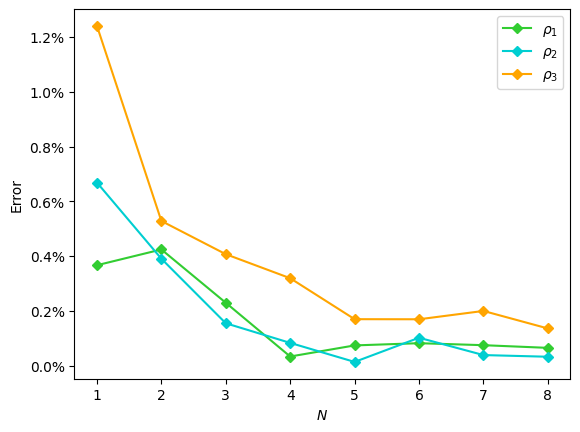}}
    \qquad 
    \subfloat[$L=\sigma=0,\ \beta=0.01$]
    {\includegraphics[width=0.47\linewidth]{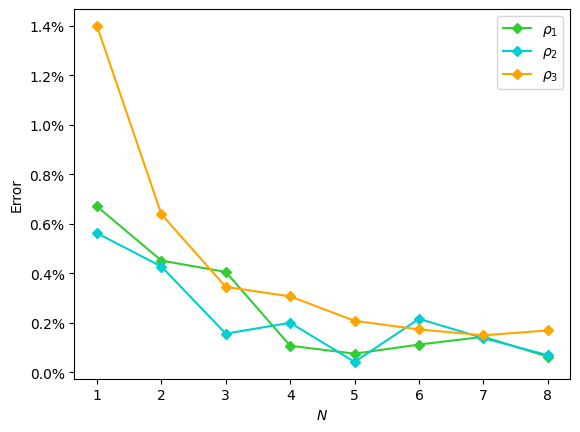}}
\caption{The error estimates of decay rate $\rho_j\ vs.\ N$ for Algorithm 1 in the simulation and the ideal model.}
    \label{fig3}
\end{figure}

\noindent Using Algorithm 2 in the simulation, the respective relative errors in the decay rates $\rho_1$, $\rho_2$, $\rho_3$ were approximately 1.01\%, 4.87\%, and 2.30\%. For the ideal case, the respective errors are approximately 0.50\%, 2.57\%, and 1.74\%. 

A few comments are in order before moving to the next section. First, the experiments above suggest that Algorithm 1 performs better in recovering the rate of decay, and Algorithm 2 is better for recovering the coefficients $\ang{h_i,g_j}$. This is consistent with the error bounds given in Theorems \ref{thm1} and \ref{case1_theorem}, and we can give an informal explanation for this trade-off. The methods of recovering the decay rate are similar between the two Algorithms, but the finer subdivision of the interval $[n\beta,(n+1)\beta)$ used in Algorithm 1 yields a more accurate estimate. In recovering the coefficient, both values $\mathfrak{m}_{i+1} - \mathfrak{m}_{i-2}$ and $\Delta_{s,\ell+1}(g,\beta) - \Delta_{s,\ell-2}(g,\beta)$, appearing respectively in Algorithms 1 and 2, have the form $c\cdot \ang{h_i,g} + \text{error}$ (although, of course, the values of $c$ and the error are not the same between the two). The approach of Algorithm 1 is simply to use this value to estimate $\ang{h_i,g}$ since, as $\beta \to 0$, $c \to 1$ and the only term contributing to the error is $\sigma$. In Algorithm 2, we use the estimated value of $\rho$ to approximate $1/c$, and multiplying $c\cdot \ang{h_i,g} + \text{error}$ through by this value produces a better approximation of the coefficient. 

\subsection{Varying Parameters}
Here we document three tests to show the sensitivity of the error in our approximations as the parameters $\beta, L,$ and $\sigma$ change. Specifically, we assessed the accuracy of the estimates for $\langle h_i, g_2\rangle$ by computing the relative error:
$$\frac{\sqrt{\sum\limits_{i=1}^3\left|\langle h_i, g_2\rangle-\mathfrak f_i(g_2)\right|^2}}{\sqrt{\sum\limits_{i=1}^3\left|\langle h_i, g_2\rangle\right|^2}}.$$
Note that due to the observations made in the last section, we expect the errors for Algorithm 2 to be smaller than those for Algorithm 1. 
\begin{figure}[H]
    \centering
    {\includegraphics[width=0.45\linewidth]{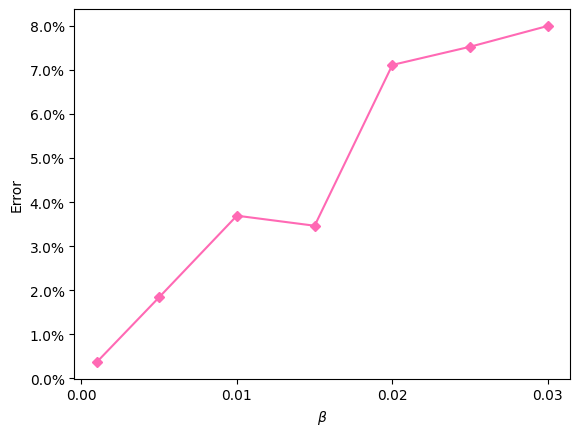}}
    \qquad
    {\includegraphics[width=0.45\linewidth]{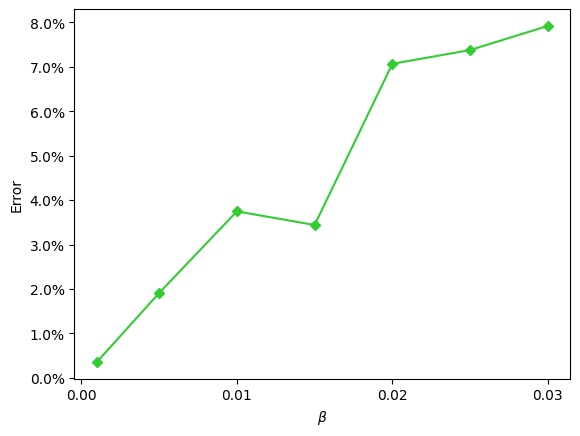}} 

    {\includegraphics[width=0.45\linewidth]{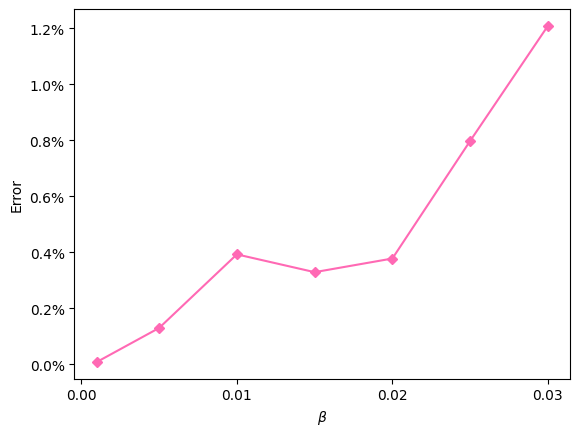}}
    \qquad
    {\includegraphics[width=0.45\linewidth]{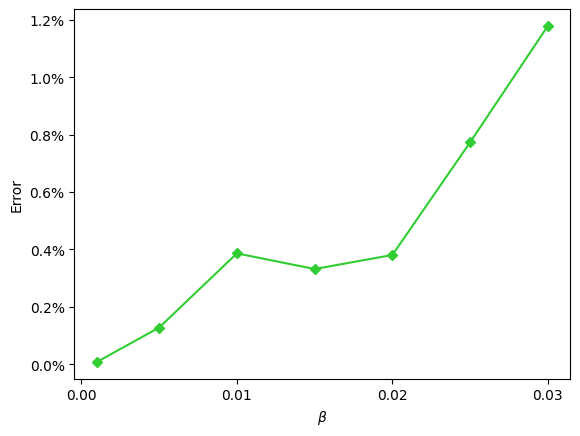}}
    
\caption{The error estimate  of $\langle h_i,g_2\rangle\ vs.\ \beta$: $L=10^{-2},$ $\sigma=10^{-3}$. The background sources are $\eta(x,t)=xe^{-Lt}$ and $\eta(x,t)=x\sin(Lt)$ for the first and second columns, respectively. Algorithm 1 is given in the top row, Algorithm 2 on the bottom.}
    \label{fig4}
\end{figure}

The results in Figure \ref{fig4} here are as expected. The relative error is quite low for both Algorithms for small values of $\beta$, but grow quickly as $\beta$ gets larger. The next group of plots, given in Figure \ref{fig5}, shows the error in both Algorithms as the Lipschitz constant $L$ is varied. Surprisingly, the performance of each Algorithm is almost independent of the Lipschitz constant in the range investigated.

\begin{figure}[H]
    \centering
    {\includegraphics[width=0.45\linewidth]{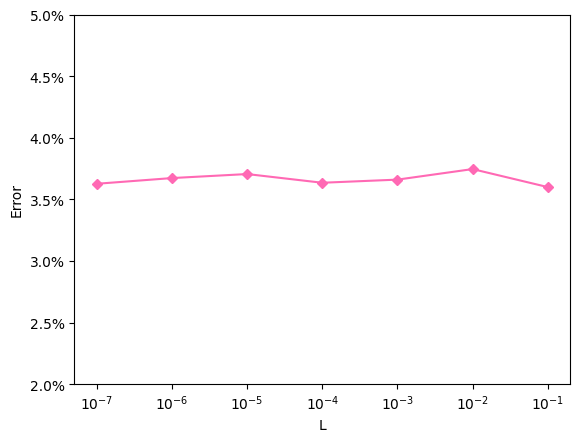}}
    \qquad
    {\includegraphics[width=0.45\linewidth]{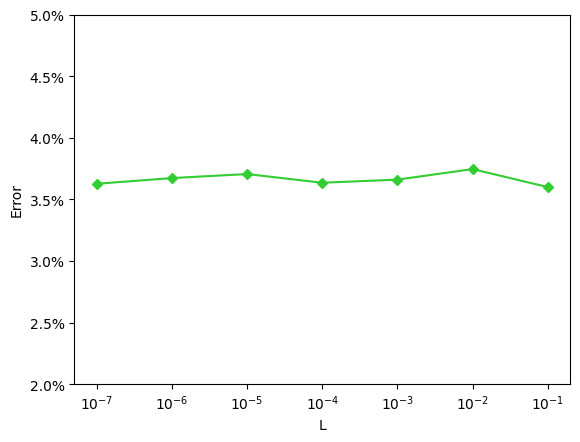}} 

    {\includegraphics[width=0.45\linewidth]{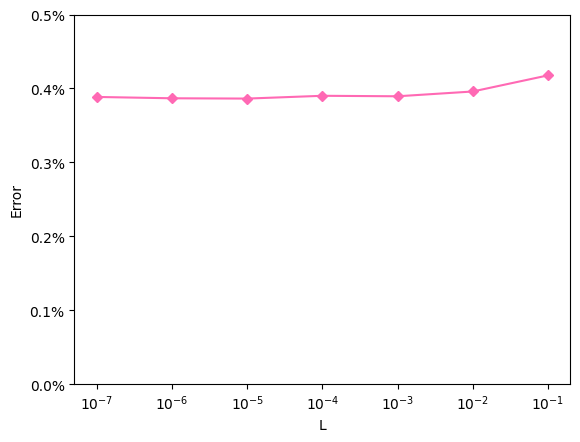}}
    \qquad
    {\includegraphics[width=0.45\linewidth]{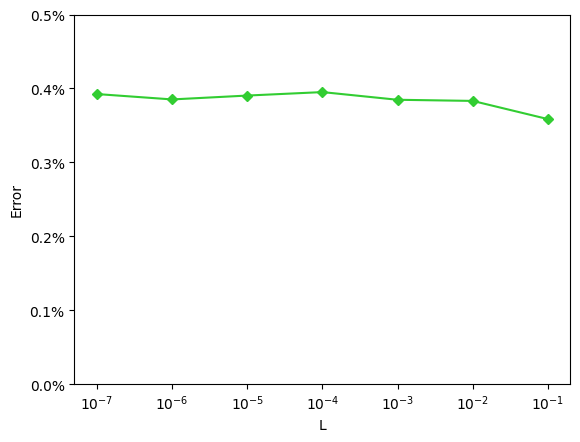}}
    
\caption{The error estimate  of $\langle h_i,g_2\rangle\ vs.\ L$: $\beta = 0.01,$ $\sigma=10^{-3}$. The background sources are $\eta(x,t)=xe^{-Lt}$ and $\eta(x,t)=x\sin(Lt)$ for the first and second columns, respectively. Algorithm 1 is given in the top row, Algorithm 2 on the bottom.}
    \label{fig5}
\end{figure}

The last numerical experiment, given in Figure \ref{fig6}, plots the accuracy of each Algorithm against the noise level $\sigma$. We observe that, for both Algorithms, the noise has minimal influence on the error when it is less $10^{-3}$. However, both Algorithms see a sharp increase in the error as $\sigma$ approaches $10^{-1}$. This is because when the additive noise $\sigma = 10^{-1}$, it accounts for roughly 10\% of the signal values. As a result, the error is primarily determined by the noise rather than the time-step $\beta$ or the Lipschitz constant $L$ of the background source. 

\begin{figure}[H]
    \centering
    {\includegraphics[width=0.45\linewidth]{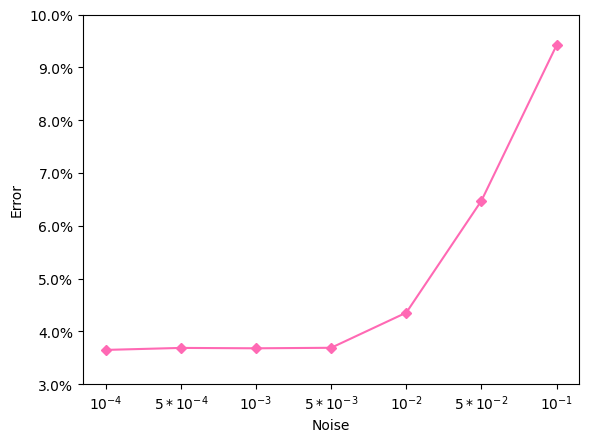}}
    \qquad
    {\includegraphics[width=0.45\linewidth]{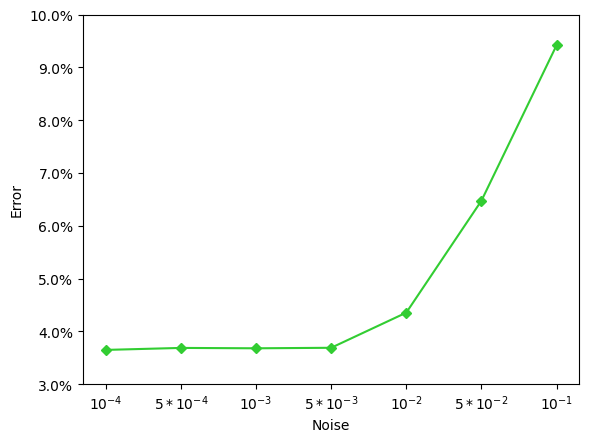}} 

    {\includegraphics[width=0.45\linewidth]{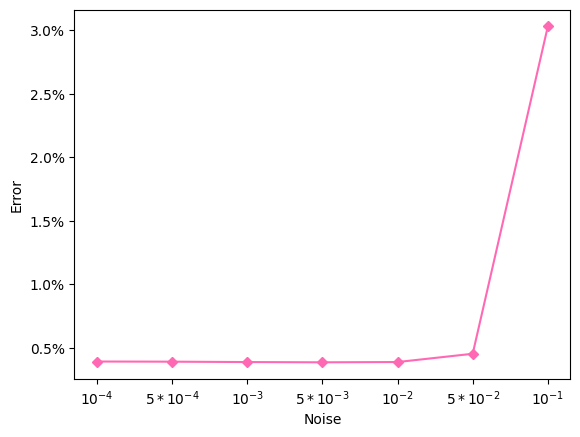}}
    \qquad
    {\includegraphics[width=0.45\linewidth]{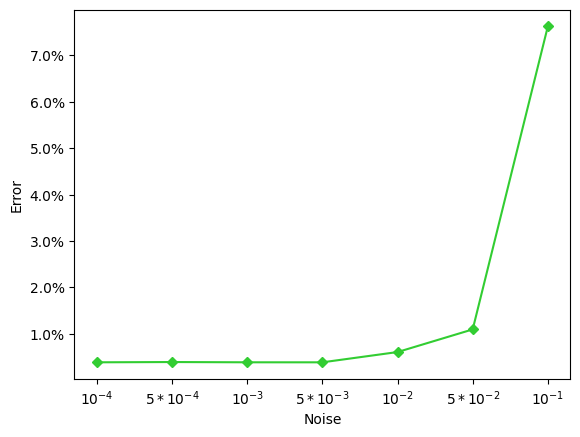}}
    
\caption{The error estimate  of $\langle h_i,g_2\rangle\ vs.\ \sigma$: $\beta = 0.01,$ $L=10^{-2}$. The background sources are $\eta(x,t)=xe^{-Lt}$ and $\eta(x,t)=x\sin(Lt)$ for the first and second columns, respectively. Algorithm 1 is given in the top row, Algorithm 2 on the bottom.}
    \label{fig6}
\end{figure}

\section{Appendix}
\subsection{Derivation of Local Lipschitz Constants} Here we derive local Lipschitz constants of $\Tilde{\eta}$ defined in (\ref{Reducedf}). Let $\ell_0\beta$ be the initial time-step passed to Algorithm 2 so that $t_{j-1} < \ell_0\beta < t_j$. For each time step $\ell\beta > \ell_0\beta$ around which the catalyst at time $t_j$ is not detected, we can update the Lipschitz constant of $\Tilde{\eta}$. Observe the string of inequalities
\begin{align*}
     \sup_{t \geq \ell\beta} \|\frac{d}{dt} (\sum_{i = 1}^{j-1} h_ie^{\rho_i(t_i - t)})\| &\leq 
 \sup_{t\geq \ell\beta}\bigg( \sum_{i = 1}^{j-1} \|h_i\|\rho_i e^{\rho_i(t_i - t)} \bigg) \\ 
 &\leq H\hat{\rho}\sum_{i = 0}^{j-2} e^{\check{\rho}(t_{j-1-i} - \ell\beta)}.
\end{align*}
Using the assumption \eqref{tsep}, and the fact that $t_{j-1} < \ell_0\beta$ we have that $t_{j-1-i} \leq \ell_0\beta - i(4\beta + D)$ for all $0 \leq i \leq j-2$. This implies that 
\begin{align*}
    H\hat{\rho}\sum_{i = 0}^{j-2} e^{\check{\rho}(t_{j-1-i} - \ell\beta)} &\leq H\hat{\rho}\sum_{i = 0}^{j-2} e^{\check{\rho}(\ell_0\beta - i(4\beta+D) - \ell\beta)} \\
    &\leq H\hat{\rho}e^{\beta(\ell_0-\ell)} \sum_{i = 0}^{\infty} e^{-i\check{\rho}(4\beta + D)} \\
    &= \frac{H\hat{\rho}e^{\beta(\ell_0 - \ell)}}{1 - e^{-\check{\rho}(4\beta + D)}}.
\end{align*}
It follows that $\Tilde{\eta}$ is Lipschitz on $[\ell\beta,\infty)$ with (known) Lipschitz constant 
\[
L_\ell = L + \frac{H\hat{\rho}e^{\beta(\ell_0 - \ell)}}{1 - e^{-\check{\rho}(4\beta + D)}} \le  L + \frac{H\hat{\rho}}{1 - e^{-\check{\rho} D}} .
\]

\subsection{Proof of Lemma \ref{lemmaIHate}} 
We show only the case when $a \geq 0$, the case for $a < 0$ is similar. Since $a \geq 0$, $f$ is increasing, so
    \[
    1 - \frac{f(x)}{f(x+a)} \geq 0
    \]
    for each $x \in [0,\infty)$. This means we only need to show that $f(x)/f(x+a)$ is monotonically nonincreasing. 
    By the quotient rule, it suffices to show that 
    \[
     f(x)f'(x+a) - f'(x)f(x+a) \geq 0
    \]
    for each $x$. An elementary calculation shows that 
    \begin{equation}\label{int_eq}
    \begin{split}
    &x^2(x+a)^2\big(f(x)f'(x+a) - f'(x)f(x+a)\big) \\
    &= a \big(1 + e^{\beta (2x + a)} + e^{\beta x} (\beta x -1) - e^{\beta (a + x)} (1 + \beta x)\big) - x^2\beta e^{\beta x}(e^{a\beta}-1).
    \end{split}
    \end{equation}
    Since the RHS of (\ref{int_eq}) vanishes for $x = 0$, it suffices to show that this RHS is nondecreasing. Taking the $x$-derivative of the RHS of (\ref{int_eq}) and simplifying gives 
    \begin{equation}\label{int_eq2}
    \beta e^{\beta x} ((e^{a \beta}-1) (2 + \beta x)x + 
   a (\beta x + 2 e^{\beta (a + x)}  - e^{a \beta} (2 + \beta x))).
    \end{equation}
    The first term in (\ref{int_eq2}) is nonnegative, and the second term is nonnegative for $x \in [0,\infty)$ if 
    \begin{equation}\label{int_eq3}
    2(e^{\beta(a+x)} - e^{a\beta}) + \beta x - e^{a\beta}\beta x \geq 0.
    \end{equation}
    This is, again, an expression which vanishes when $x = 0$, and is easily seen to be increasing. To summarize, (\ref{int_eq3}) is nonnegative, which implies (\ref{int_eq2}) is nonnegative, which means the RHS of (\ref{int_eq}) is nonnegative, so $f(x)/f(x+a)$ is nonincreasing. This completes the proof.

\subsection{Proof of Lemma \ref{derivations}} Let $T$
be the semigroup generated by the operator $A$. Then the measurements $(\ref{eq_abstract_m})$ can be  expanded as:

\begin{equation}\label{eq_abstract_m_expanded}
\begin{split}
m_{s,\ell}(g,\beta) &= \int_{\ell\beta}^{(\ell + 1)\beta} e^{-st} \bigg(T(t)u_0 + \chi_{[t_j,\infty)}\int_{t_j}^t e^{\rho_j(t_j - \tau)} \ang{T(t-\tau)h_j, (\overline{s}I-A^*)g}  \; d\tau \\
&\quad+ \int_{0}^t \ang{T(t-\tau)\eta(\tau),(\overline{s}I-A^*)g} \; d\tau\bigg) \;dt + \nu_{s,\ell}(g).
\end{split}
\end{equation}
The following Lemma was proven in \cite{aldhuakor23}, but we include the proof here for completeness. 

\begin{lemma}
    Let $\omega \in \R$ be the growth bound of the semigroup $T$,
i.e. the infimum over all real numbers such that there exists an $M \geq 0$ with $\|T(t)\| \leq Me^{\omega t}$ for all $t \geq 0$. Then, for all real numbers $s > \omega$, the following identity holds:
    \begin{equation}\label{eq_background}
    \begin{split}
    &\int_{\ell\beta}^{(\ell+1)\beta} e^{-s t} \ang{\int_{0}^t \ang{T(t-\tau)\eta(\tau),(\overline{s}I-A^*)\frac{g}{\beta^2}} \; d\tau} \; dt \\
    =& \int_{\ell\beta}^{(\ell+1)\beta} e^{-s t}\ang{\eta(\tau),\frac{g}{\beta^2}} \; dt + \int_0^{\ell\beta}e^{-s\ell\beta}\ang{T(\ell\beta - \tau)\eta(\tau),\frac{g}{\beta^2}} \\
    &\quad-\int_0^{(\ell+1)\beta}e^{-s(\ell+1)\beta}\ang{T((\ell+1)\beta - \tau)\eta(\tau),\frac{g}{\beta^2}}.
    \end{split}
\end{equation}
\end{lemma}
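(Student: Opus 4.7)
The plan is to reduce the double integral on the left-hand side of \eqref{eq_background} to the three terms on the right by first interchanging the order of integration and then collapsing the inner $t$-integral via a single identity that rewrites $(\bar sI-A^*)g$ as a total derivative along orbits of $T$. The key algebraic lemma I would isolate is, for $x\in\HH$, $g\in D(A^*)$ and $\tau\le a\le b$,
\[
\int_a^b e^{-st}\ang{T(t-\tau)x,(\bar sI - A^*)g}\,dt \;=\; e^{-sa}\ang{T(a-\tau)x,g} \;-\; e^{-sb}\ang{T(b-\tau)x,g}.
\]
It follows by writing the integrand as $s\ang{T(t-\tau)x,g} - \ang{AT(t-\tau)x,g}$ (moving $A^*$ to the other side of the pairing) and observing that this equals $-\tfrac{d}{dt}\bigl(e^{-st}\ang{T(t-\tau)x,g}\bigr)$, so the fundamental theorem of calculus produces the stated boundary terms.

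With this identity in hand, I would apply Fubini to turn $\int_{\ell\beta}^{(\ell+1)\beta}\int_0^t\cdots\,d\tau\,dt$ into $\int_0^{(\ell+1)\beta}\int_{\max(\tau,\ell\beta)}^{(\ell+1)\beta}\cdots\,dt\,d\tau$ and split the $\tau$-range into $[0,\ell\beta]$ and $[\ell\beta,(\ell+1)\beta]$. For $\tau\in[0,\ell\beta]$ the inner integral runs from $a=\ell\beta$ to $b=(\ell+1)\beta$, producing the two convolutional boundary terms $e^{-s\ell\beta}\ang{T(\ell\beta-\tau)\eta(\tau),g} - e^{-s(\ell+1)\beta}\ang{T((\ell+1)\beta-\tau)\eta(\tau),g}$ after integrating in $\tau$. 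For $\tau\in[\ell\beta,(\ell+1)\beta]$ the inner integral runs from $a=\tau$ to $b=(\ell+1)\beta$, and because $T(a-\tau)=T(0)=I$ the first boundary term collapses to $e^{-s\tau}\ang{\eta(\tau),g}$, which, after relabelling the dummy variable $\tau\mapsto t$, is exactly the $\int_{\ell\beta}^{(\ell+1)\beta}e^{-st}\ang{\eta(t),g}\,dt$ contribution on the right-hand side, while the other boundary term extends the convolutional integration range from $[0,\ell\beta]$ to all of $[0,(\ell+1)\beta]$. Dividing through by $\beta^2$ matches \eqref{eq_background} term by term.

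The main obstacle is justifying $\tfrac{d}{dt}T(t-\tau)x = AT(t-\tau)x$ when $x=\eta(\tau)$ is only known to lie in $\HH$ rather than in $D(A)$. This is precisely where the hypothesis $s>\omega$ enters: because $\|T(t)\|\le Me^{\omega t}$, the map $t\mapsto e^{-st}T(t-\tau)x$ is bounded and Bochner integrable on any $[a,b]$, and the integrated identity
\[
e^{-sb}T(b-\tau)x \,-\, e^{-sa}T(a-\tau)x \;=\; \int_a^b e^{-st}AT(t-\tau)x\,dt \,-\, s\int_a^b e^{-st}T(t-\tau)x\,dt
\]
holds strongly on $D(A)$ and extends to all of $\HH$ by density once we pair with $g\in D(A^*)$, transferring $A$ to $A^*$ on the test side. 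All the remaining manipulations — Fubini, the two case-splits, and the recombination into the three surviving terms — are routine once this extension is secured.
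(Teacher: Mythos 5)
Your proof is correct, but it takes a genuinely different and more elementary route than the paper. The paper writes the left-hand side as a difference of two improper integrals $\int_{\ell\beta}^{\infty}-\int_{(\ell+1)\beta}^{\infty}$, interchanges the order of integration, and collapses the inner $t$-integrals by invoking the Laplace-transform representation $R(s,A)=\int_0^\infty e^{-st}T(t)\,dt$ together with $\ang{R(s,A)x,(\bar s I-A^*)g}=\ang{x,g}$; the hypothesis $s>\omega$ is essential there both for convergence of those improper integrals (and hence the Fubini step) and for $s$ to lie in the resolvent set. Your key lemma instead collapses the inner integral on the compact interval $[a,b]$ by recognizing the integrand as a total weak derivative, $\frac{d}{dt}\bigl(e^{-st}\ang{T(t-\tau)x,g}\bigr)=-e^{-st}\ang{T(t-\tau)x,(\bar sI-A^*)g}$, and this avoids the resolvent and improper integrals entirely; the subsequent Fubini on the bounded triangle and the split $\tau\in[0,\ell\beta]$ versus $\tau\in[\ell\beta,(\ell+1)\beta]$ are routine and correctly produce the three surviving terms.

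One small conceptual slip worth flagging: you attribute the resolution of the regularity issue to the hypothesis $s>\omega$, but in your argument that hypothesis plays no role at all. The correct justification of the weak-derivative identity is simply that in a Hilbert space $T^*$ is again a $C_0$-semigroup with generator $A^*$, so for $g\in D(A^*)$ one has $\frac{d}{dt}\ang{T(t)x,g}=\frac{d}{dt}\ang{x,T^*(t)g}=\ang{T(t)x,A^*g}$ for every $x\in\HH$ directly, without any density argument through $D(A)$. Since your integrals live on compact intervals, boundedness is automatic for every $s\in\CC$. In fact, your route proves the stated identity for all $s\in\CC$ at once, whereas the paper proves it for real $s>\omega$ and then relies on analytic continuation in a later step; this is a modest advantage of your approach.
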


\begin{proof} Let $p \in \{\ell,\ell+1\}$. Then
\begin{align*} 
&\int_{p\beta}^{\infty} e^{-s t}\left\langle \int_{0}^{t}T(t-\tau)\eta(\tau)d\tau, \left(\overline
{s} I- A^*\right)\frac{g}{\beta^2}\right\rangle dt \\
=&\int_{0}^{p\beta}  \int_{p\beta}^{\infty}  e^{-s t}\left\langle T(t-\tau)\eta(\tau), \left(\overline
{s} I- A^*\right)\frac{g}{\beta^2}\right\rangle dtd\tau\\
&\quad+\int_{p\beta}^{\infty}  \int_{\tau}^{\infty}  e^{-s t}\left\langle T(t-\tau)\eta(\tau), \left(\overline
{s} I- A^*\right)\frac{g}{\beta^2}\right\rangle dtd\tau\\
=&\int_{0}^{p\beta}  \int_{0}^{\infty}  e^{-s(t+p\beta) }\left\langle T(t+p\beta-\tau)\eta(\tau), \left(\overline
{s} I- A^*\right)\frac{g}{\beta^2}\right\rangle dtd\tau\\
&\quad+\int_{p\beta}^{\infty}  \int_{0}^{\infty}  e^{-s(t+\tau) }\left\langle T(t)\eta(\tau), \left(\overline
{s} I- A^*\right)\frac{g}{\beta^2}\right\rangle dtd\tau\\
=&\int_{0}^{p\beta} e^{-sp\beta} \int_{0}^{\infty}  e^{-st }\left\langle T(t)T(p\beta-\tau)\eta(\tau), \left(\overline
{s}I- A^*\right)\frac{g}{\beta^2}\right\rangle dtd\tau\\
&\quad+\int_{p\beta}^{\infty}  e^{-s\tau }\int_{0}^{\infty}  e^{-st}\left\langle T(t)\eta(\tau), \left(\overline
{s} I- A^*\right)\frac{g}{\beta^2}\right\rangle dtd\tau\\
=&\int_{0}^{p\beta} e^{-sp\beta}  \left\langle T(p\beta-\tau)\eta(\tau),  \frac{g}{\beta^2}\right\rangle d\tau+ \int_{p\beta}^{\infty}  e^{-s\tau } \left\langle \eta(\tau),  \frac{g}{\beta^2}\right\rangle d\tau,
 \end{align*}
 where we changed the order of integration in the first inequality. 

Therefore, for $s> \omega$, 
 \begin{equation}\label{eqn22-background-1}
 \begin{aligned}
 &\int_{\ell\beta}^{(\ell+1)\beta}e^{-s t}\left\langle \int_{0}^{t}T(t-\tau)\eta(\tau)d\tau, \left(\overline
 {s} I- A^*\right)\frac{g}{\beta^2}\right\rangle dt\\
     =&\left(\int_{\ell\beta}^{\infty} -\int_{(\ell+1)\beta}^{\infty}\right)e^{-s t}\left\langle \int_{0}^{t}T(t-\tau)\eta(\tau)d\tau, \left(\overline
     {s}I- A^*\right)\frac{g}{\beta^2}\right\rangle dt\\
     =&\int_{0}^{\ell\beta} e^{-s\ell\beta}  \left\langle T(\ell\beta-\tau)\eta(\tau),  \frac{g}{\beta^2}\right\rangle d\tau+ \int_{\ell\beta}^{\infty}  e^{-s\tau} \left\langle \eta(\tau),  \frac{g}{\beta^2}\right\rangle d\tau-\\
     &\left(\int_{0}^{(\ell+1)\beta} e^{-s(\ell+1)\beta}  \left\langle T((\ell+1)\beta-\tau)\eta(\tau),  \frac{g}{\beta^2}\right\rangle d\tau+ \int_{(\ell+1)\beta}^{\infty}  e^{-s\tau} \left\langle \eta(\tau),  \frac{g}{\beta^2}\right\rangle d\tau\right) \\
     =&\int_{0}^{\ell\beta} e^{-s\ell\beta}  \left\langle T(\ell\beta-\tau)\eta(\tau),  \frac{g}{\beta^2}\right\rangle d\tau+ \int_{\ell\beta}^{(\ell+1)\beta}   e^{-s\tau} \left\langle \eta(\tau),  \frac{g}{\beta^2}\right\rangle d\tau \\
     &-\int_{0}^{(\ell+1)\beta} e^{-s(\ell+1)\beta}  \left\langle T((\ell+1)\beta-\tau)\eta(\tau),  \frac{g}{\beta^2}\right\rangle d\tau
 \end{aligned}
  \end{equation}
  as claimed.
\end{proof}

\begin{proof}[Proof of Lemma \ref{derivations}] As above, let $\omega$ be the growth bound of the semigroup $\{T(t)\}_{t \geq 0}$. Assume, for now, that $s > \omega$ so that the resolvent
\[
R(s,A) = (sI - A)^{-1} = \int_0^{\infty} e^{-s t} T(t) \; dt
\]
is defined at $s$. Using this, we first compute: 
\begin{equation}\label{eq_init}
    \begin{split}
    &\int_{\ell\beta}^{(\ell+1)\beta} e^{-st}\ang{T(t)u_0,(\overline{s}I - A^*)\frac{g}{\beta^2}}dt \\
    =& \int_{0}^\infty e^{-s(t+\ell\beta)}\ang{T(t)T(\ell\beta)u_0,(\overline{s}I - A^*)\frac{g}{\beta^2}}dt  \\
    &\quad-\int_{0}^\infty e^{-s(t+(\ell+1)\beta)}\ang{T(t)T\big((\ell+1)\beta\big)u_0,(\overline{s}I - A^*)\frac{g}{\beta^2}}dt \\
    =& e^{-s\ell\beta}\ang{R(s,A)T(\ell\beta)u_0,(\overline{s}I - A^*)\frac{g}{\beta^2}}\\ 
    &\quad-e^{-s(\ell+1)\beta}\ang{R(s,A)T\big((\ell+1)\beta\big)u_0,(\overline{s}I - A^*)\frac{g}{\beta^2}}\\
    =& e^{-s\ell\beta}\ang{T(\ell\beta)u_0,\frac{g}{\beta^2}} - e^{-s(\ell+1)\beta}\ang{T\big((\ell+1)\beta\big)u_0,\frac{g}{\beta^2}}.
    \end{split}
\end{equation}

Now we need to simplify 
\[
\int_{\ell\beta}^{(\ell + 1)\beta} e^{-st} \chi_{[t_j,\infty)}\int_{t_j}^t e^{\rho_j (t_j - \tau)}\ang{T(t-\tau)h_j,(\overline{s}I-A^*)\frac{g}{\beta^2}}\; d\tau dt.
\]
Define $\alpha = \max{\{\ell\beta,t_j\}}$ and $a(\tau) = \max{\{\alpha-\tau,0\}}$. Then we have the following string of equalities: 
\begin{equation}
    \begin{split}
        &\int_{\ell\beta}^{(\ell + 1)\beta} e^{-st} \chi_{[t_j,\infty)}\int_{t_j}^t e^{\rho (t_j - \tau)}\ang{T(t-\tau)h_j,(\overline{s}I-A^*)\frac{g}{\beta^2}}\; d\tau dt \\
        =&\int_{\alpha}^{(\ell+1)\beta} e^{-st} \int_{t_j}^t e^{\rho (t_j - \tau)}\ang{T(t-\tau)h_j,(\overline{s}I-A^*)\frac{g}{\beta^2}}\; d\tau dt \\
        =& e^{\rho t_j} \int_{t_j}^{(\ell+1)\beta} \int_{\max{\{\alpha,\tau\}}}^{(\ell+1)\beta} e^{-\rho\tau}e^{-st}\ang{T(t-\tau)h_j,(\overline{s}I-A^*)\frac{g}{\beta^2}} \;dtd\tau \\
        =& e^{\rho t_j} \int_{t_j}^{(\ell+1)\beta}e^{-\tau(\rho+s)}\int_{a(\tau)}^{(\ell+1)\beta - \tau}e^{-st}\ang{T(t)h_j,(\overline{s}I-A^*)\frac{g}{\beta^2}}\;dtd\tau \\
        =& e^{\rho t_j} \int_{t_j}^{(\ell+1)\beta} e^{-\tau(\rho+s)}\ang{R(s,A)[e^{-sa(\tau)}T\big(a(\tau)\big)h_j - e^{-s((\ell+1)\beta - \tau}T\big((\ell+1)\beta - \tau\big)h_j],(\overline{s}I - A^*)\frac{g}{\beta^2}} \;d\tau \\
        =& e^{\rho t_j} \int_{t_j}^{(\ell+1)\beta} e^{-\tau(\rho+s)}\ang{e^{-sa(\tau)}T\big(a(\tau)\big)h_j,\frac{g}{\beta^2}} - e^{-\tau \rho}e^{-s(\ell+1)\beta}\ang{T\big((\ell+1)\beta - \tau)\big)h_j,\frac{g}{\beta^2}} \; d\tau. 
    \end{split}
\end{equation}

If $\ell\beta \leq t_j$, then $a(\tau) \equiv 0$ (for $t_j < \tau < (\ell+1)\beta)$. If $t_j < \ell\beta$, then 
\[
a(\tau) = \begin{cases}
    \ell\beta - \tau & \tau \in [t_j,\ell\beta] \\
    0 & \tau \in (\ell\beta,((\ell+1)\beta).
\end{cases}
\]
In the former case, 
\begin{equation}\label{eq_first_with_s}
\begin{split}
& e^{\rho t_j} \int_{t_j}^{(\ell+1)\beta} e^{-\tau(\rho+s)}\ang{e^{-sa(\tau)}T\big(a(\tau)\big)h_j,\frac{g}{\beta^2}} \; d\tau  \\
=& e^{\rho t_j} \int_{t_j}^{(\ell+1)\beta}e^{-\tau(\rho+s)}\ang{h_j,\frac{g}{\beta^2}} \; d\tau \\
=& \frac{e^{-st_j} - e^{\rho(t_j - (\ell+1)\beta) - s(\ell+1)\beta}}{(\rho+s)\beta^2}\ang{h_j,g}.
\end{split}
\end{equation}
In the latter case, 
\begin{equation}\label{eq_second_with_s}
\begin{split}
&e^{\rho t_j} \int_{t_j}^{(\ell+1)\beta} e^{-\tau(\rho+s)}\ang{e^{-sa(\tau)}T\big(a(\tau)\big)h_j,\frac{g}{\beta^2}} \; d\tau \\
=& e^{\rho t_j}\int_{t_j}^{\ell\beta} e^{-\tau(\rho+s)}\ang{e^{-s(\ell\beta - \tau)}T\big(\ell\beta - \tau\big)h_j,\frac{g}{\beta^2}} d\tau\\
&\quad+ e^{\rho}t_j \int_{\ell\beta}^{(\ell+1)\beta} e^{-\tau(\rho+s)}\ang{h_j,\frac{g}{\beta^2}} d\tau\\
=& e^{\rho t_j}\int_{t_j}^{\ell\beta} e^{-\tau\rho}\ang{e^{-s\ell\beta}T\big(\ell\beta - \tau\big)h_j,\frac{g}{\beta^2}}d\tau \\
&\quad+ \frac{e^{\rho(t_j - \ell\beta) - s\ell\beta} - e^{\rho(t_j - (\ell+1)\beta)- s(\ell+1)\beta}}{(\rho+s)\beta^2}\ang{h_j,g}.
\end{split}
\end{equation}

 Putting together (\ref{eq_background}), (\ref{eq_init}), (\ref{eq_first_with_s}) and (\ref{eq_second_with_s}), we have 
\begin{equation}\label{eq_first_total}
\begin{split}
    m_{s,\ell}(g,\beta) &= e^{-s\ell\beta}\ang{T(\ell\beta)u_0,\frac{g}{\beta^2}} - e^{-s(\ell+1)\beta}\ang{T\big((\ell+1)\beta\big)u_0,\frac{g}{\beta^2}} \\
    &+ \frac{e^{-st_j} - e^{\rho(t_j - (\ell+1)\beta) - s(\ell+1)\beta}}{(\rho+s)\beta^2}\ang{h_j,g} + \int_{\ell\beta}^{(\ell+1)\beta} e^{-st}\ang{\eta(\tau),\frac{g}{\beta^2}} dt \\
    &+ \int_0^{\ell\beta}e^{-s\ell\beta}\ang{T(\ell\beta - \tau)\eta(\tau),\frac{g}{\beta^2}}d\tau - \int_0^{(\ell+1)\beta}e^{-s(\ell+1)\beta}\ang{T((\ell+1)\beta - \tau)\eta(\tau),\frac{g}{\beta^2}}d\tau
\end{split}
\end{equation}
when $\ell\beta \leq t_j$, and 
\begin{equation}\label{eq_second_total}
\begin{split}
    m_{s,\ell}(g,\beta) &= e^{-s\ell\beta}\ang{T(\ell\beta)u_0,\frac{g}{\beta^2}} - e^{-s(\ell+1)\beta}\ang{T\big((\ell+1)\beta\big)u_0,\frac{g}{\beta^2}} \\ 
    &+ e^{\rho t_j}\int_{t_j}^{\ell\beta} e^{-\tau\rho}\ang{e^{-s\ell\beta}T\big((\ell+1)\beta - \tau\big)h_j,\frac{g}{\beta^2}} \\
    &+ e^{\rho t_j}\int_{t_j}^{\ell\beta} e^{-\tau\rho}\ang{e^{-s\ell\beta}T\big(\ell\beta - \tau\big)h_j,\frac{g}{\beta^2}}d\tau \\
&+ \frac{e^{\rho(t_j - \ell\beta) - s\ell\beta} - e^{\rho(t_j - (\ell+1)\beta)- s(\ell+1)\beta}}{(\rho+s)\beta^2}\ang{h_j,g} + \int_{\ell\beta}^{(\ell+1)\beta} e^{-st}\ang{\eta(\tau),\frac{g}{\beta^2}} dt \\
    &+ \int_0^{\ell\beta}e^{-s\ell\beta}\ang{T(\ell\beta - \tau)\eta(\tau),\frac{g}{\beta^2}}d\tau - \int_0^{(\ell+1)\beta}e^{-s(\ell+1)\beta}\ang{T((\ell+1)\beta - \tau)\eta(\tau),\frac{g}{\beta^2}}d\tau
\end{split}
\end{equation}
when $t_j < \ell\beta$.

We have thus shown that (\ref{eq_abstract_m}) and (\ref{eq_first_total}) are equivalent when $\ell\beta \leq t_j$, and (\ref{eq_abstract_m}) and (\ref{eq_second_total}) are equivalent when $t_j < \ell\beta$ for all real $s > \omega$. Now, (\ref{eq_abstract_m}), (\ref{eq_first_total}), and (\ref{eq_second_total}) are all analytic functions of $s$. It follows that these equivalences are valid for all $s \in \mathbb{C}$. For $s = \frac{2\pi i k}{\beta}$, (\ref{eq_first_total}) becomes 
\begin{align*}
    m_{s,\ell}(g,\beta) &= \ang{T(\ell\beta)u_0,\frac{g}{\beta^2}} - \ang{T\big((\ell+1)\beta\big)u_0,\frac{g}{\beta^2}} + \frac{e^{-st_j} - e^{\rho(t_j - (\ell+1)\beta) - s(\ell+1)\beta}}{(\rho+s)\beta^2}\ang{h_j,g} \\
    &+ \int_{\ell\beta}^{(\ell+1)\beta} e^{-st}\ang{\eta(\tau),\frac{g}{\beta^2}} dt + \int_0^{\ell\beta}\ang{T(\ell\beta - \tau)\eta(\tau),\frac{g}{\beta^2}} \; d\tau \\
    &- \int_0^{(\ell+1)\beta}\ang{T((\ell+1)\beta - \tau)\eta(\tau),\frac{g}{\beta^2}} \; d\tau
\end{align*}
Similarly, (\ref{eq_second_total}) becomes 
\begin{align*}
    m_{s,\ell}(g,\beta) &= \ang{T(\ell\beta)u_0,\frac{g}{\beta^2}} - \ang{T\big((\ell+1)\beta\big)u_0,\frac{g}{\beta^2}} \\ 
    &+ e^{\rho t_j}\int_{t_j}^{\ell\beta} e^{-\tau\rho}\ang{T\big((\ell+1)\beta - \tau\big)h_j,\frac{g}{\beta^2}} + \frac{e^{\rho(t_j - \ell\beta)} - e^{\rho(t_j - (\ell+1)\beta)}}{(\rho+s)\beta^2}\ang{h_j,g}\\ 
    &+ \int_{\ell\beta}^{(\ell+1)\beta} e^{-st}\ang{\eta(\tau),\frac{g}{\beta^2}} dt + \int_0^{\ell\beta}\ang{T(\ell\beta - \tau)\eta(\tau),\frac{g}{\beta^2}}\; d\tau \\
    &- \int_0^{(\ell+1)\beta}\ang{T((\ell+1)\beta - \tau)\eta(\tau),\frac{g}{\beta^2}}\; d\tau.
\end{align*}
Collecting the terms that don't depend on $s$ proves the theorem. 
\end{proof}

\bigskip 
\noindent {\bf{Acknowledgements}.} The authors dedicate this paper to Charly Gr\"ochening, a great mathematician and friend, 
to commemorate a wonderful event that happened approximately $f(1959)$ years ago, where $$f(t) = \sum\limits^\infty_{m=-\infty}\sum\limits^\infty_{n=0}\frac{m-1+(2^6+1)\mathrm{sinc} (m-1)}{(n+1)\ln 2} 
e^{\frac {i\pi nt}{1959}}e^{-(t-1959m)^2}.$$ 

The authors of the paper were supported in part by the collaborative NSF grants DMS-2208030 and DMS-2208031.

\bibliographystyle{siam}
\bibliography{refs,dynamical-references}

\end{document}